\documentclass[twoside]{article}
\usepackage{amsmath, amsthm, amssymb, amsfonts, mathrsfs, mathtools}
\usepackage{graphicx}
\usepackage{makeidx}
\usepackage[left=2cm,right=2cm,top=2cm,bottom=2cm]{geometry}
\usepackage{caption}
\usepackage{subcaption}
\usepackage{tikz}
\usetikzlibrary{decorations.pathreplacing}
\tikzstyle{vertex}=[auto=left,circle,draw=black,fill=white, inner sep=1.5]
\usepackage{fancyhdr,lipsum}

\newtheorem{theorem}{Theorem}[section]

\newtheorem{conj}[theorem]{Conjecture}
\newtheorem{lemma}[theorem]{Lemma}
\newtheorem{prop}[theorem]{Proposition}

\newtheorem{definition}{Definition}

\title{Chromatic Index of Signed Generalized Book Graphs and Signed Complete Graphs}

\author{$^{\ast}$Deepak Sehrawat\\
Department of Mathematics\\
Pandit Neki Ram Sharma Government College Rohtak\\
Rohtak - 124001, India\\
Email: deepakssehrawat5@gmail.com\\
\\  Rohit\\
Department of Mathematics, Maharshi Dayanand University Rohtak\\
Rohtak - 124001, India\\
Email: rohit.rs24.maths@mdurohtak.ac.in\\
}

\date{}
\begin{document}
\maketitle
\thispagestyle{plain}
\vspace{-0.3in}

\begin{center}{Abstract}\end{center}
A signed graph $(G,\sigma)$ consists of a graph $G$ and the signature $\sigma : E(G) \rightarrow \{+1,-1\}$. An incidence of $G$ is a pair $(v,e)$, where $v$ is one of the end vertices of an edge $e \in E(G)$. A proper $q$-edge coloring $\gamma$ of signed graph $(G,\sigma)$ is an assignment of colors to incidences satisfying that $\gamma(v,e) = - \sigma(e) \gamma(w,e)$ for every edge $e=vw$ and for any two incidences $(v,e)$ and $(v,f)$, involving the same vertex, $\gamma(v,e) \neq  \gamma(v,f)$. The chromatic index of a signed graph $(G,\sigma)$, denoted by $\chi'(G,\sigma)$, is the minimum number $q$ for which $(G,\sigma)$ has a proper $q$-edge coloring. In this paper, we determine the chromatic index of signed generalized book graphs. We also determine the chromatic index of signed complete graphs of order up to six. 

\vspace{0.1in}
\noindent
{\textbf{Keywords:} Chromatic index, complete graph, edge coloring, generalized book graph, signed graph.

\noindent {\textbf{2020 Mathematics Subject Classification:} 05C15, 05C22.
 
\section{Introduction}\label{intro}
The main goal of this paper is to determine the chromatic index of all signed generalized book graphs and all signed complete graphs of order up to six.

All graphs considered in this paper are finite, simple and undirected graphs. For a graph $G$, $V(G)$ and $E(G)$ denote the {\it vertex set} and {\it edge set} of $G$, respectively. The {\it degree} of a vertex $v \in V(G)$, denoted $\deg_{G}(v)$, is the number of edges incident to $v$. Given a graph $G$, the {\it maximum degree} $\Delta(G)$ of $G$ is the maximum degree among the degrees of all vertices in $G$.

A proper $q${\it -edge coloring} of a graph $G$ is a mapping $\gamma : E(G) \rightarrow \{1, \ldots , q\}$ such that adjacent edges receive distinct colors. The {\it chromatic index} of a graph $G$, denoted by $\chi'(G)$, is the minimum number $q$ required for a proper $q$-edge coloring of $G$. In \cite{Vizing1964}, Vizing proved that, for every graph $G$, either $\chi'(G) = \Delta(G)$ or $\chi'(G)= \Delta(G) +1$. Moreover, a graph $G$ is {\it class 1} if $\chi'(G) = \Delta(G)$ and {\it class 2} if $\chi'(G)= \Delta(G) +1$.

Let $G$ be a graph and a mapping $\sigma : E(G) \rightarrow \{+1,-1\}$. Then a pair $\Sigma=(G,\sigma)$ is called a {\it signed graph}. Given a signed graph $\Sigma$, $G$ is called the {\it underlying graph} and $\sigma$ is called the {\it signature} of $\Sigma$.

Vertex coloring of signed graphs was initiated by Zaslavsky~\cite{Zaslavsky1982}. His idea of coloring the vertices of a signed graph had compatibility with deletion/contraction recurrence and chromatic polynomials that specializes to ordinary graphs when the signed graph is all positive. In 2016, Máčajová et al. \cite{Máčajová2016} used the idea of color set, given by Zaslavsky, to define the chromatic number of a signed graph and proved an extension of famous Brook's theorem in the context of signed graphs. The concept of edge coloring of signed graphs was independently proposed by Behr~\cite{Behr2020} and Zhang et al. \cite{Zhang2020}. In this paper, our computation is carried out along the definition of edge coloring introduced by Behr. 

In \cite{Cai2022}, authors computed the chromatic index of signed generalized Petersen graph $P(n,1)$ for $n \geq 5$. Particularly, they proved that $\chi'(P(n,1),\sigma)=3$ for $n \geq 5$ and gave several examples satisfying $\chi'(P(5,2),\sigma)=4$ and $\chi'(P(6,2),\sigma)=4$. Zheng et al. \cite{Zheng2022} considered the chromatic index of signed generalized Petersen graph $P(n,2)$ and proved that $\chi'(P(n,2),\sigma)=3$ if $n \equiv 3\mod 6 (n \geq 9)$ and $\chi'(P(n,2),\sigma)=4$ if $n=2p (p \geq 4)$. Recently, Wen et al. studied the edge coloring of Cartesian product of signed graphs in \cite{Wen2025}.

Let $m$ and $n$ be positive integers such that $m\geq 3$ and $n \geq 2$. The $m$-cycle {\it book graph} $B(m,n,2)$ consists of $n$ copies of the cycle $C_m$ whose intersection is a path $P_2$. For $m=3$, the book graph $B(3,n,2)$ is well-known {\it triangular book} graph. Shi and Song \cite{Shi2007} obtained upper bounds on the spectral radius of triangular book-free graphs. Sehrawat and Bhattacharjya \cite{Sehrawat2022} computed the chromatic number and chromatic polynomials of all signed book graphs. However the chromatic index of signed book graphs is still unknown. We generalize the family of book graph as follows.

\begin{definition}
\rm{For integers $m \geq 3$, $n \geq 2$ and $k \geq 2$, the {\it generalized book graph} $B(m,n,k)$ consists of $n$ copies of the cycle $C_m$ whose intersection is a path $P_k$.}
\end{definition}

In Subsection 3.1, we will compute the chromatic index of all signed generalized book graphs. 
 
To the best of our knowledge, the edge coloring of signed complete graphs is also not studied anywhere. However, for unsigned complete graphs, it is known that the chromatic index of a complete graph $K_n$ depends on the parity of $n$. More precisely, 

$$ \chi'(K_n) = 
 \begin{cases}
 n-1,~~~~~\text{if}~ n~\text{ is even};\\
n,~~~~~~~~~~~ \text{if}~ n~\text{ is odd}.
\end{cases}
$$ 

In Subsection 3.2, we consider the edge coloring of signed complete graphs of order up to six. We determine the chromatic index of all signed complete graphs of order up to six. We also show that for any signature $\sigma$, $\chi'(K_n,\sigma) =n-1 = \chi'(K_n)$, when $2 \leq n \leq 6$ is even. It is also proved that for odd values of $n$, unlike unsigned case, there are some signed complete graphs $(K_n,\sigma)$ for which $\chi'(K_n,\sigma) =n-1 < \chi'(K_n)$. The paper ends with some conjectures and open problems.

\section{Preliminaries}\label{prelim}
In this section, we present some necessary definitions, notations and results which are used to obtain our main results. 

In a signed graph $(G,\sigma)$, an edge $e \in E(G)$ is {\it positive} (respectively, {\it negative}) if $\sigma (e) = 1$ (respectively, $\sigma (e) = -1$). Let $\sigma^{-1}(-1) = \{e \in E(G)~:~\sigma(e)=-1\}$, then $(G,\sigma)$ is {\it all-positive} if $\sigma^{-1}(-1) = \emptyset$ and {\it all-negative} if $\sigma^{-1}(-1) = E(G)$. By $(G,+)$ and $(G,-)$, we denote all-positive and all-negative signed graphs, respectively. A cycle is {\it positive} if the product of its edge signs is positive and {\it negative}, otherwise. A signed graph is {\it balanced} if its all cycles are positive, and {\it unbalanced}, otherwise. The terms signed graph and its balance appeared first in a paper of Frank Harary~\cite{Harary1955}. 

Let $v$ be a vertex of a signed graph $(G,\sigma)$. Then {\it Switching} $v$ in $(G,\sigma)$ is an operation that changes sign of each edge incident to $v$. In general, if we switch $X \subseteq V(G)$ in $(G,\sigma)$, then we get a signed graph $(G,\sigma')$ such that for every edge $uv \in E(G)$ we have $$ \sigma'(uv) = 
 \begin{cases}
 -\sigma(uv),~~~~~~\text{if exactly one of}~ u, v ~\text{belongs to}~ X, \\
\sigma(uv),~~~~~~~~ \text{otherwsise}.
\end{cases}
$$ 

Two signed graphs $(G,\sigma')$ and $(G,\sigma)$ are {\it switching equivalent} (or, simply {\it equivalent}) if $(G,\sigma')$ can be obtained by switching some of the vertices of $(G,\sigma)$. It is denoted by $(G,\sigma') \sim (G,\sigma)$ (or $\sigma' \sim \sigma$ if $G$ is clear from the context). The following characterization for two signed graphs to be switching equivalent is given by Zaslavsky.

\begin{lemma}(\cite{Zaslavsky1982-1})\label{SE-lamma}
Two signed graphs $(G,\sigma_1)$ and $(G,\sigma_2)$ are switching equivalent if and only if they have the same set of negative cycles. 
\end{lemma}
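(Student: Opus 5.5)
Both directions turn on one observation: switching a vertex $v$ preserves the sign of every cycle. A cycle through $v$ uses exactly two edges at $v$, so switching $v$ multiplies the cycle's sign by $(-1)^2=1$. A cycle avoiding $v$ is not affected at all. Switching a set $X$ is a composition of single-vertex switchings (or, directly, every cycle crosses the cut $(X,V\setminus X)$ an even number of times). So switching equivalent signatures have the same negative cycles, which is the forward direction.

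For the converse, suppose $\sigma_1$ and $\sigma_2$ have the same negative cycles. Let $\tau=\sigma_1\sigma_2$ be the edgewise product. Every cycle $C$ then satisfies $\tau(C)=\sigma_1(C)\sigma_2(C)=+1$, so $(G,\tau)$ is balanced. It suffices to find $X\subseteq V(G)$ such that $\tau(uv)=-1$ exactly when $uv$ crosses the cut $(X,V\setminus X)$. Indeed, switching $X$ in $(G,\sigma_1)$ multiplies $\sigma_1$ by $\tau$ edgewise, which gives $\sigma_1\tau=\sigma_2$ because $\tau^2=1$.

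To build $X$, work in each connected component separately. Fix a spanning tree $T$ rooted at $r$. Put $v\in X$ exactly when the tree path from $r$ to $v$ has $\tau$-product $-1$. By construction, every tree edge $uv$ satisfies $\tau(uv)=-1$ if and only if exactly one of $u,v$ lies in $X$.

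The main step is the non-tree edges. Each non-tree edge $uv$ closes a fundamental cycle with the tree path $P_{uv}$, and balance gives $\tau(uv)=\tau(P_{uv})$. Now $\tau(P_{uv})$ equals the product of the root-path products to $u$ and to $v$, since the shared part above their meeting point cancels. That product is $-1$ exactly when one of $u,v$ is in $X$. So non-tree edges obey the same rule as tree edges.

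Taking $X$ as the union of these sets over all components then yields the switching from $\sigma_1$ to $\sigma_2$, which completes the proof.
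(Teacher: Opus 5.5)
Your proof is correct, and there is no proof in the paper to compare it with: the paper quotes this lemma from Zaslavsky. It then states the fact that $(G,\sigma)$ is balanced if and only if $(G,\sigma)\sim(G,+)$ as a direct consequence.

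Your argument runs in the opposite logical direction and is self-contained. The spanning-tree construction of $X$ is exactly a proof that a balanced signature $\tau$ has $\tau^{-1}(-1)$ equal to an edge cut. This is Harary's balance theorem, i.e.\ the nontrivial half of that corollary. The product signature $\tau=\sigma_1\sigma_2$ then lifts it to an arbitrary pair $\sigma_1,\sigma_2$, because switching $X$ acts on any signature by edgewise multiplication with the indicator of the cut $(X,V\setminus X)$.

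The individual steps all check out:
\begin{itemize}
\item switching a set is the composition of the single-vertex switchings;
\item a cycle meets each cut in an even number of edges;
\item the common part of the two root paths contributes only squares;
\item in a simple graph each non-tree edge closes a genuine cycle with its tree path.
\end{itemize}

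What your route buys over the citation is that Lemma~\ref{SE-lamma} and its corollary are both established from elementary parity considerations alone.
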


The following lemma is a direct consequence of Lemma~\ref{SE-lamma}.

\begin{lemma}
A signed graph $(G,\sigma)$ is balanced if and only if it is switching equivalent to $(G,+)$. 
\end{lemma}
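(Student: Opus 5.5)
The plan is to derive this from Lemma~\ref{SE-lamma}, taking $\sigma_1=\sigma$ and $\sigma_2=+$. By definition, $(G,\sigma)$ is balanced exactly when it has no negative cycles. The all-positive signature $(G,+)$ also has no negative cycles, because every cycle in it has edge-sign product $+1$. So $(G,\sigma)$ is balanced if and only if its set of negative cycles equals that of $(G,+)$, namely the empty set. Lemma~\ref{SE-lamma} turns this into: $(G,\sigma)$ is balanced if and only if $(G,\sigma)\sim(G,+)$.

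For a self-contained argument, I would first check the forward implication directly. Switching a set $X$ flips the signs of exactly the edges in the cut $[X,V(G)\setminus X]$. Any cycle crosses a cut an even number of times, so switching preserves the sign of every cycle. Hence if $(G,\sigma)\sim(G,+)$, all cycles of $(G,\sigma)$ are positive.

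For the converse, suppose $(G,\sigma)$ is balanced. I would treat each component separately, fix a spanning tree $T$, and choose a root $r$. Let $X$ be the set of vertices $v$ for which the product of signs along the tree path from $r$ to $v$ is $-1$. Switching $X$ makes every tree edge positive. Any non-tree edge $e$ closes a fundamental cycle with $T$. That cycle was positive before switching, and switching preserves cycle signs, so it is still positive; since its tree edges are now positive, $e$ must be positive too. Thus the switched graph is $(G,+)$.

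The only point that needs care is that cycle signs are invariant under switching (the even-crossing parity argument). This is routine, so I expect no real obstacle: the statement is essentially the special case of Lemma~\ref{SE-lamma} where one of the two signatures is all-positive.
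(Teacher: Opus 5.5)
Your proposal is correct and follows the paper's approach: the paper gives no separate argument, noting only that the lemma is a direct consequence of Lemma~\ref{SE-lamma}. That is your first paragraph: balanced means having no negative cycles, $(G,+)$ has none, and so Lemma~\ref{SE-lamma} gives the equivalence. Your additional spanning-tree argument is also valid and makes the result self-contained, but it is not needed once Lemma~\ref{SE-lamma} is available.
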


Two signed graphs are {\it isomorphic} to each other if there exists a graph isomorphism between their underlying graphs preserving the edge signs. Two signed graphs are {\it switching isomorphic} to each other if one is isomorphic to a switching of other. A {\it switching isomorphism class} of $G$ is the collection of all signed graphs, having underlying graph $G$, in which any two signed graphs are either switching equivalent or switching isomorphic to each other. 

 An {\it incidence} of $G$ is a pair $(v,e)$, where $v$ is one of the end vertices of an edge $e \in E(G)$. Thus corresponding to every edge of $G$ there are two incidences. The set of all incidences of $G$ is denoted by $I(G)$. Behr~\cite{Behr2020} defined edge coloring of signed graphs in terms of incidences (rather than just edges themselves) in order to incorporate edge signs and to make the definition compatible with switching operation. The definition of edge coloring and chromatic index of a signed graph, given by Behr, is as follows.

\begin{definition}({\cite{Behr2020}})\label{Definition - edge coloring of SGs}
\rm{A $q${\it -edge coloring} $\gamma$ of $\Sigma$ is an assignment of colors from the set $M_q$ to each incidence of $\Sigma$ subject to the condition that $\gamma(v,e) = - \sigma(e) \gamma(w,e)$ for each edge $e=vw$, where $M_q= \{ \pm 1, \ldots , \pm r \}$ if $q=2r$ and $M_q= \{0,\pm 1, \ldots , \pm r\}$ if $q=2r+1$. A $q$-edge coloring is {\it proper} if for any two incidences $(v,e)$ and $(v,f)$, involving the same vertex, $\gamma(v,e) \neq  \gamma(v,f)$. The {\it chromatic index} of a signed graph $\Sigma$, denoted by $\chi'(\Sigma)$, is the minimum number $q$ for which $\Sigma$ has a proper $q$-edge coloring.}
\end{definition}

From Definition~\ref{Definition - edge coloring of SGs}, it is clear that if we assign a non-zero color $c$ to a negative edge $e$, then $e$ must receive the color $c$ at both incidences. However, if $e$ is positive, then one of the incidences of $e$ receives the color $c$ while the other incidence receives the color $-c$. If $0 \in M_q$, then for any positive or negative edge we can assign the color $0$ to both of its incidences. Thus, the chromatic index of a signed graph $(G,\sigma)$ depends on the underlying graph $G$ as well as on its signature $\sigma$. Janczewski et al. \cite{Janczewski2023} studied the graph $G$ for which $\chi'(G,\sigma)$ does not depend on $\sigma$. To achieve this goal, they introduced two new classes of graphs, namely $1^{\pm}$ and $2^{\pm}$, such that graph $G$ is class $1^{\pm}$ (respectively, $2^{\pm}$) if and only if $\chi'(G,\sigma) = \Delta (G)$ (respectively, $\chi'(G,\sigma) = \Delta (G) +1$) for all possible signatures $\sigma$.

Since negative edges receive the same color at both of their incidences, the following lemma is immediate.

\begin{lemma}(\cite{Behr2020})\label{Lemma-(G,-) and G have same index}
For all-negative signed graph $(G,-)$, there is a one-to-one correspondence between proper $q$-edge colorings of $(G,-)$ and proper $q$-edge colorings of $G$.
\end{lemma}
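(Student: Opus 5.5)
The plan is to build an explicit bijection between the two sets of colorings, using the fact that on an all-negative edge the coloring constraint simply forces equal colors at both ends. Fix $q$ and let $M_q$ be the color set from Definition~\ref{Definition - edge coloring of SGs}. Since $|M_q| = q$, we may view an ordinary proper $q$-edge coloring of $G$ as a map $c : E(G) \to M_q$ in which adjacent edges receive distinct colors; relabelling $\{1,\ldots,q\}$ by a fixed bijection onto $M_q$ changes nothing.

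First, take a proper $q$-edge coloring $\gamma$ of $(G,-)$. For every edge $e=vw$ we have $\sigma(e)=-1$, so the defining condition reads $\gamma(v,e) = -(-1)\gamma(w,e) = \gamma(w,e)$. Both incidences of $e$ therefore carry the same color, and we can set $\Phi(\gamma)(e) := \gamma(v,e)=\gamma(w,e)$. This map $\Phi(\gamma): E(G)\to M_q$ is a proper edge coloring of $G$. Indeed, if two distinct edges $e,f$ share an end vertex $v$, properness of $\gamma$ at $v$ gives $\gamma(v,e)\neq\gamma(v,f)$, that is, $\Phi(\gamma)(e)\neq\Phi(\gamma)(f)$.

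Conversely, given a proper edge coloring $c$ of $G$ with colors in $M_q$, we define $\Psi(c)(v,e) := c(e)$ for each incidence $(v,e)\in I(G)$. For $e=vw$ this gives $\Psi(c)(v,e) = c(e) = \Psi(c)(w,e) = -\sigma(e)\Psi(c)(w,e)$, so the edge condition holds. Two incidences $(v,e)$ and $(v,f)$ with $e\neq f$ at the same vertex receive $c(e)\neq c(f)$, because $e$ and $f$ are adjacent. Hence $\Psi(c)$ is a proper $q$-edge coloring of $(G,-)$.

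Finally, we check directly that $\Phi\circ\Psi$ and $\Psi\circ\Phi$ are identities. The second uses the equality of the two incidence colors established in the first step. This gives the one-to-one correspondence.

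There is no real obstacle here. The only points needing care are the sign computation $-\sigma(e)=1$, which forces equal incidence colors, and the harmless relabelling of the color set to $M_q$, including the case where $0\in M_q$ (odd $q$), which needs no special treatment since $-0=0$. As an immediate consequence, $\chi'(G,-)=\chi'(G)$.
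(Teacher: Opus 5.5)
Your proposal is correct and takes essentially the paper's approach. The paper states the lemma (citing Behr) as immediate because the constraint $\gamma(v,e)=-\sigma(e)\gamma(w,e)$ with $\sigma(e)=-1$ forces both incidences of an edge to share a color; your maps $\Phi$ and $\Psi$, with the relabelling of $\{1,\ldots,q\}$ onto $M_q$, spell that observation out as an explicit bijection.
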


For any graph $G$, by Lemma~\ref{Lemma-(G,-) and G have same index}, it is obvious that $\chi' (G)=\chi'(G,-)$, where $(G,-)$ is all-negative graph over $G$. Behr~\cite{Behr2020} proved that the edge coloring of a signed graph is compatible with switching operation. That is, switching does not affect the chromatic index of a signed graph.

\begin{lemma}(\cite{Behr2020})\label{Lemma-invariance of chro index}
Suppose $\gamma$ is a proper $q$-edge coloring of $(G,\sigma)$ and suppose $(G,\sigma')$ is obtained from $(G,\sigma)$ by switching a vertex set $X$. If $\gamma'$ is a new coloring which is obtained from $\gamma$ by negating all colors on all incidences involving vertices from
$X$, then $\gamma'$ is a proper $q$-edge coloring of $(G,\sigma')$.
\end{lemma}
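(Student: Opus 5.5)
The statement is local, so I will verify the two defining conditions of a proper $q$-edge coloring for $\gamma'$ directly from those for $\gamma$. First I must check that $\gamma'$ takes values in $M_q$. This holds because $M_q$ is closed under negation: it is $\{\pm1,\ldots,\pm r\}$, possibly together with $0$, and $-0=0$. So negating colors never leaves the palette.

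Next comes the edge condition $\gamma'(v,e)=-\sigma'(e)\gamma'(w,e)$ for each edge $e=vw$. I will split into three cases according to how many endpoints of $e$ lie in $X$.

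If neither $v$ nor $w$ is in $X$, then $\sigma'(e)=\sigma(e)$, and both incidence colors are unchanged. The relation is inherited verbatim.

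If both are in $X$, then again $\sigma'(e)=\sigma(e)$, and both colors are negated. Then $\gamma'(v,e)=-\gamma(v,e)=\sigma(e)\gamma(w,e)=-\sigma'(e)\gamma'(w,e)$, as required.

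If exactly one is in $X$, say $v\in X$ and $w\notin X$, then $\sigma'(e)=-\sigma(e)$. Only the color at $(v,e)$ is negated, so $\gamma'(v,e)=-\gamma(v,e)=\sigma(e)\gamma(w,e)=-\sigma'(e)\gamma'(w,e)$. The symmetric case $w\in X$, $v\notin X$ is identical after swapping roles, using that the defining relation is symmetric (multiply both sides by $-\sigma(e)$ and use $\sigma(e)^2=1$).

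Finally comes properness at each vertex $v$. All incidences at $v$ are transformed the same way: either all are left unchanged ($v\notin X$) or all are negated ($v\in X$). Negation is injective on $M_q$, so distinct colors $\gamma(v,e)\neq\gamma(v,f)$ remain distinct under $\gamma'$.

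None of these steps is a real obstacle. The only point needing care is the mixed-endpoint case, where the sign flip of $\sigma$ must exactly compensate the one-sided negation of colors. This is precisely what the computation above shows.
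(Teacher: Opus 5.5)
Your proof is correct and complete. You check that $M_q$ is closed under negation, you split the edge condition $\gamma'(v,e)=-\sigma'(e)\gamma'(w,e)$ by how many endpoints of $e$ lie in $X$, and you note that at each vertex the incidence colors are either all fixed or all negated, and negation is a bijection of $M_q$; this is the standard direct verification. The paper itself gives no proof of this lemma, quoting it from Behr, so there is no different route in the paper to compare against.
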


This lemma tells us that if $(G,\sigma)$ and $(G,\sigma')$ are switching equivalent and $\chi'(G,\sigma)=q$ then $\chi'(G,\sigma')=q$. 

Behr proved a signed graph version of Vizing's theorem which is stated as follows.

\begin{theorem}(\cite{Behr2020})\label{Vizing's thm}
For a signed graph $(G,\sigma)$, $\Delta(G) \leq \chi'(G,\sigma) \leq \Delta(G)+1.$
\end{theorem}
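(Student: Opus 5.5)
The lower bound is immediate and I would dispose of it first. Let $v$ be a vertex with $\deg_G(v)=\Delta(G)$. In any proper $q$-edge coloring of $(G,\sigma)$ the $\Delta(G)$ incidences at $v$ receive pairwise distinct colors from $M_q$. Since $|M_q|=q$, this forces $q\ge \Delta(G)$, so $\chi'(G,\sigma)\ge\Delta(G)$.

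For the upper bound I would copy the structure of the classical proof of Vizing's theorem, working throughout with the color set $M_{\Delta+1}$ and inducting on $|E(G)|$. Remove an edge $e_0=uv$ and take a proper coloring of the rest by the induction hypothesis. Every vertex has at most $\Delta$ colored incidences, so every vertex \emph{misses} at least one color of $M_{\Delta+1}$. By Lemma~\ref{Lemma-invariance of chro index}, switching preserves proper colorings after negating colors at switched vertices. So we may switch freely around a vertex and, for instance, assume that a chosen fan edge is negative. A negative edge is colored by the same color $c$ at both ends, so $e_0$ can be colored $c$ whenever $c$ is missing at both $u$ and $v$.

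The basic recoloring tool will be a signed Kempe chain. Fix two colors $a,b$ with $\{\pm a\}\neq\{\pm b\}$. Let $\varphi$ be the involution of $M_{\Delta+1}$ that exchanges $a\leftrightarrow b$ and $-a\leftrightarrow -b$ and fixes every other color. Since $\varphi(-c)=-\varphi(c)$, applying $\varphi$ to every incidence of any set of whole edges keeps the rule $\gamma(x,e)=-\sigma(e)\gamma(y,e)$ satisfied.

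The chain starting at a vertex $x$ that misses $b$ is traced as follows. Leave $x$ along its incidence colored $a$ and arrive at $y$ with color $\pm a$. At $y$, continue along the incidence whose color is the $\varphi$-image of the arrival color, and repeat. Because colors at a vertex are distinct, each step is forced. Switching vertices along the way lets us normalize every chain edge to be negative, and then the chain becomes an ordinary $a/b$ alternating path or cycle. Applying $\varphi$ to the whole chain keeps the coloring proper, exactly as in the unsigned case.

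With this tool in hand, I would run Vizing's fan argument. Build a maximal fan $uv=uw_0,uw_1,\dots,uw_k$ in which the edge $uw_{i+1}$ carries at $u$ a color that is missing at $w_i$, suitably signed after switching. If some color missing at $w_k$ is also missing at $u$, rotate the fan and finish. Otherwise swap a chain between a color missing at $u$ and a color missing at $w_k$, and argue that the chain cannot end in two incompatible places, as in the classical proof.

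I expect the main obstacle to be making the chain well defined and path-like. The subgraph whose incidences use colors in $\{\pm a,\pm b\}$ can have degree up to $4$ at a vertex, since $a,-a,b,-b$ may all appear there. So I must check that the forced rule "leave by the $\varphi$-image of the arrival color" never revisits a vertex in an incompatible state, and that each chain is either a path or a closed cycle that is consistent after switching. I would first try to prove that, after switching so that the chain edges are negative, the chain traced from $x$ is exactly a component of an ordinary two-colored subgraph. If that fails, I would fall back on replacing $G$ by a suitable cover, for instance the signed double cover, where the signs disappear and the classical argument applies directly.
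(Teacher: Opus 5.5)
The paper gives no proof of this statement; it is quoted from Behr, so your argument has to stand on its own. Your lower bound is correct. For the upper bound, your worry about well-definedness is unfounded. At each vertex your rule pairs the $a$-incidence with the $b$-incidence and the $-a$-incidence with the $-b$-incidence, so the chain is a well-defined trail and applying $\varphi$ to it is a legitimate recoloring. (In the signed double cover it is an ordinary $a/b$ Kempe chain swapped together with its mirror $(-a)/(-b)$ chain.)

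The real problem is that this trail is in general \emph{not} a path. It can pass through a vertex twice, once through each pair, and it can even return to its start. Take a triangle $xyz$ with $xy$ positive and $yz$, $zx$ negative. Color it by $\gamma(x,xy)=1$, $\gamma(y,xy)=-1$, with $yz$ colored $-2$ and $zx$ colored $-1$ at both ends, and take $a=1$, $b=2$. The chain runs $x\to y\to z\to x$ and stops at $x$. So your normalization step is false. If every chain edge were negative, every chain incidence would be colored from $\{a,b\}$, and no vertex could be revisited. Here the triangle is positive, so it is not switching equivalent to the (negative) all-negative triangle.

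This is exactly where the fan argument breaks, and your sketch does not address it. Counting end-incidences still shows that the relevant incidences at $u$, $w_j$, $w_k$ cannot all be ends of one chain. Classically, though, the swap is harmless for a further reason: the only fan edge with a color in $\{\alpha,\beta\}$ is $uw_{j+1}$, and the swapped chain avoids $u$. In the signed case, fan edges may also be colored $-\alpha$ or $-\beta$, and these are unprotected in two ways:
\begin{itemize}
\item the chain may meet $u$ through its $\{-\alpha,-\beta\}$ pair and recolor those fan edges;
\item the chain may end at a predecessor $w_i$ through that pair and make the color of $uw_{i+1}$ present at $w_i$.
\end{itemize}
Either way the fan is destroyed. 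Controlling this interference is the genuinely new part of a signed Vizing proof.

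The double-cover fallback does not sidestep it. A coloring of the cover descends to $(G,\sigma)$ only if mirror-image edges receive opposite colors $c$ and $-c$, and Vizing's theorem for the cover gives no such coloring. Running the argument equivariantly forces you to swap each chain together with its mirror, and the mirror meeting the fan centre is the same obstruction.

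Finally, $\varphi$ is undefined when $a$ or $b$ is $0$, since it would have to send $0$ to both $b$ and $-b$. Yet $0\in M_{\Delta+1}$ whenever $\Delta$ is even, and it can be forced on you, e.g.\ when $0$ is the only color missing at $w_k$. There a vertex's $0$-incidence can be linked to either its $c$- or its $-c$-incidence, so this case needs its own argument.
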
 

A signed graph $(G,\sigma)$ is {\it class} 1 if $\chi'(G,\sigma) = \Delta(G)$ and {\it class} 2 if $\chi'(G,\sigma) = \Delta(G)+1.$ 

Let $\Sigma =(G,\sigma)$ be a signed graph and let $\gamma$ be a proper $q$-edge coloring of $\Sigma$. The subgraph whose edge are colored using $\pm c$ with respect to $\gamma$ is denoted by $\Sigma_{c}[\gamma]$. If we have only one coloring in mind, then we write $\Sigma_{c}$. We call $\Sigma_{c}$ the \textit{c-graph} of $\Sigma$ with respect to $\gamma$. Furthermore, the maximum degree of $\Sigma_{c}$ is two because at most $c$ and $-c$ are present at each vertex of $\Sigma_{c}$. Therefore, every component of $\Sigma_{c}$ is either a path or a cycle. If $c=0$, then the maximum degree of $\Sigma_{0}$ is one and so $\Sigma_{0}$ is a matching. In \cite{Behr2020}, author also classified signed paths and signed cycles that can be possibly appear in $\Sigma_{c}$ when $c \neq 0$. More precisely, we have the following two results.

\begin{theorem}(\cite{Behr2020})
\label{Theorem-paths in Sigma_a}
Every signed path can be properly edge colored with $\pm a$ (where $a \neq 0$). Furthermore, every signed path has
exactly two different $\pm a$ colorings.
\end{theorem}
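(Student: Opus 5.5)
The plan is to propagate colors along the path one vertex at a time, using the rule $\gamma(v,e) = -\sigma(e)\gamma(w,e)$ from Definition~\ref{Definition - edge coloring of SGs}. Let the signed path be $P = v_0e_1v_1e_2\cdots e_nv_n$, where $e_i = v_{i-1}v_i$. Only the colors $a$ and $-a$ are allowed, and $a \neq 0$. So the condition on each edge says that once one incidence of $e_i$ is colored, the other incidence is forced. Likewise, properness at an interior vertex $v_i$ with $1 \le i \le n-1$ says $\gamma(v_i,e_{i+1}) \neq \gamma(v_i,e_i)$. With only two colors, and since $a\neq -a$, this forces $\gamma(v_i,e_{i+1}) = -\gamma(v_i,e_i)$.

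\textbf{Existence.} Choose $\gamma(v_0,e_1)\in\{a,-a\}$ arbitrarily. Then define the remaining colors inductively:
\[
\gamma(v_i,e_i) = -\sigma(e_i)\gamma(v_{i-1},e_i), \qquad \gamma(v_i,e_{i+1}) = -\gamma(v_i,e_i).
\]
Every value lies in $\{\pm a\}$, because $\sigma(e_i)\in\{\pm1\}$. The edge condition holds by construction. At each interior vertex the two incidences differ, because $c\neq -c$ for $c=\pm a\neq 0$. The end vertices each have only one incidence, so they impose no condition. Hence this is a proper $\pm a$ coloring.

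\textbf{Exactly two.} By the forcing argument above, any proper $\pm a$ coloring is completely determined by the value $\gamma(v_0,e_1)$. Thus there are at most two colorings. The two possible choices $a$ and $-a$ give colorings that differ at $(v_0,e_1)$, so they are distinct, and there are exactly two.

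There is no real obstacle here. The one point to watch is that the uniqueness step uses $a\neq 0$ in an essential way: it is what makes the alternative color at each interior vertex unique and different from the first. Paths with at least one edge are covered by this argument; the trivial path with no edges has the empty coloring and falls outside the counting statement.
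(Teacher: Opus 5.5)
Your proof is correct and complete. The edge rule forces the far incidence of each edge, and since only the two distinct colors $\pm a$ are available (distinct because $a\neq 0$), properness forces the next incidence at each interior vertex to be the negative; hence a proper $\pm a$ coloring is determined by $\gamma(v_0,e_1)$, and both choices succeed.

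The paper gives no proof of its own here: the statement is quoted from Behr~\cite{Behr2020}, and your propagation argument is the standard one for it. Your caveat that the edgeless path must be excluded from the ``exactly two'' count is a fair point that the quoted statement leaves implicit.
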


\begin{theorem}(\cite{Behr2020})
\label{Theorem-cycles in Sigma_a}
A signed circle $C$ can be properly colored with $\pm a$ (where $a \neq 0$) if and only if $C$ is positive. Furthermore, every positive circle has exactly two $\pm a$ colorings.
\end{theorem}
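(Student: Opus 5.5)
This statement is a result quoted from Behr, so the plan is to reprove it directly: colour the cycle edge by edge with the two symbols $a$ and $-a$, and read off the constraint that appears when the last edge closes the cycle.

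\emph{Setup.} Write $C = v_0 e_1 v_1 e_2 \cdots v_{n-1} e_n v_0$ with indices mod $n$. At each vertex $v_i$ there are exactly two incidences, $(v_i,e_i)$ and $(v_i,e_{i+1})$. Both colours lie in $\{a,-a\}$ and must differ, so
\[
\gamma(v_i,e_{i+1}) = -\gamma(v_i,e_i).
\]
Along each edge $e_{i+1}=v_iv_{i+1}$ the definition of an edge coloring gives
\[
\gamma(v_{i+1},e_{i+1}) = -\sigma(e_{i+1})\,\gamma(v_i,e_{i+1}).
\]

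\emph{Propagation and closure.} Composing the two rules gives
\[
\gamma(v_{i+1},e_{i+1}) = \sigma(e_{i+1})\,\gamma(v_i,e_i).
\]
Start from $x=\gamma(v_0,e_n)\in\{a,-a\}$. The first rule gives $\gamma(v_0,e_1)=-x$. Then $\gamma(v_1,e_1)=\sigma(e_1)x$, and iterating the composed rule until we return to $v_0$ through $e_n$ yields
\[
\gamma(v_0,e_n) = \Big(\prod_{j=1}^{n}\sigma(e_j)\Big)\,x .
\]
For this to be consistent with the starting value $x$ we need $\prod_j\sigma(e_j)\,x = x$. Since $x\neq 0$ (because $a\neq 0$), this holds exactly when $\prod_j \sigma(e_j)=1$, that is, when $C$ is positive. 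So a negative cycle admits no $\pm a$ colouring.

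\emph{Sufficiency and counting.} If $C$ is positive, define the colouring by the propagation rules starting from either choice of $x$. Every incidence is then determined, the closure condition holds, and properness at each vertex and the edge condition on each edge hold by construction. Conversely, any proper $\pm a$ colouring is forced by its value $x$ at $(v_0,e_n)$. The two choices $x=a$ and $x=-a$ give distinct colourings, since they differ at that incidence. Hence a positive cycle has exactly two $\pm a$ colourings.

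The only delicate point is keeping the signs straight in the propagation step, so that the product of edge signs comes out rather than $(-1)^n$ times that product. This works because the two minus signs, one from properness at the vertex and one from the edge condition, cancel at every step.
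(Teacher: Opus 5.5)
Your proof is correct and complete. Properness at a degree-$2$ vertex with only the colours $\pm a$ forces $\gamma(v_i,e_{i+1})=-\gamma(v_i,e_i)$. Composing this with the edge condition gives $\gamma(v_{i+1},e_{i+1})=\sigma(e_{i+1})\,\gamma(v_i,e_i)$. The only constraint not built into the propagation is the edge condition on the closing edge $e_n$, which holds exactly when $\prod_{j}\sigma(e_j)=1$. Every colouring is determined by the single value $x=\gamma(v_0,e_n)$, so there are exactly two colourings in the positive case.

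There is no proof in the paper to compare against: the statement, like its companion on signed paths, is simply quoted from Behr. Among the tools the paper does set up, your direct propagation is the most self-contained route. It in effect reproves the path result, since deleting $e_n$ leaves a path whose two $\pm a$ colourings are your two propagations, and then reads off the closing condition. It also makes visible why the invariant is the sign product rather than $(-1)^n$ times it.

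An alternative uses the switching lemma: negating colours at switched vertices maps $\pm a$ colourings bijectively onto $\pm a$ colourings. A positive cycle switches to the all-positive one, whose $\pm a$ colourings correspond to the two directed orientations of the cycle. This explains conceptually why only the balance of $C$ matters, but the negative case still needs a short check of the same kind as your closure step.
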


\noindent
\textbf{Observation 1.} Let $\gamma$ be a proper edge coloring of $\Sigma$. Then by Theorem~\ref{Theorem-paths in Sigma_a}, Theorem~\ref{Theorem-cycles in Sigma_a} and above discussion, it follows that $\Sigma_a[\gamma]$ consists of paths or positive cycles so that $\Sigma_a[\gamma]$ is a balanced subgraph of maximum degree 2.

We will apply the concept of $c$-graph $\Sigma_{c}$ and Observation 1 in the computation of the chromatic index of signed complete graphs (in Subsection 3.2). A result about the chromatic index of signed cycles is the following:

\begin{prop}(\cite{Zhang2020})
\label{Prop-chro index of signed cycles}
For any signed cycle $(C,\sigma)$, $\chi ' (C, \sigma) = \begin{cases}
2,~\text{if}~(C, \sigma)~\text{is balanced};\\
3,~\text{otherwise}.
\end{cases}$
\end{prop}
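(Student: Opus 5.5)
The plan is to combine the signed Vizing bound with the classification of $\pm a$-colorable signed cycles. Since every vertex of a cycle $C$ has degree $2$, Theorem~\ref{Vizing's thm} gives $2 \le \chi'(C,\sigma) \le 3$. It therefore suffices to decide exactly when a proper $2$-edge coloring exists, and to note that $3$ colors always suffice.

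\textbf{The case $q=2$.} Here $M_2=\{\pm 1\}$, so a proper $2$-edge coloring $\gamma$ of $(C,\sigma)$ uses only the colors $\pm 1$ on every incidence. In the notation of the $c$-graph, this means $\Sigma_1[\gamma]$ is all of $(C,\sigma)$. By Theorem~\ref{Theorem-cycles in Sigma_a} with $a=1$, such a coloring exists if and only if $C$ is positive. For a cycle, being positive is the same as being balanced, since its only cycle is itself. Hence $\chi'(C,\sigma)=2$ exactly when $(C,\sigma)$ is balanced, and otherwise $\chi'(C,\sigma)\ge 3$.

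\textbf{The upper bound.} The bound $\chi'(C,\sigma)\le 3$ already follows from Theorem~\ref{Vizing's thm}, but I would also give a direct construction to keep the argument self-contained. Pick an edge $e=uv$ of $C$ and assign color $0$ to both of its incidences; this is allowed for either sign of $e$. The remainder $C-e$ is a signed path, so by Theorem~\ref{Theorem-paths in Sigma_a} it has a proper $\pm 1$ coloring.

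We then check properness vertex by vertex. Each internal vertex of the path sees two distinct colors from $\{\pm1\}$, because the path coloring is proper. Each of $u$ and $v$ sees $0$ from $e$ and a nonzero color from its path edge, so these are also distinct. This gives a proper $3$-edge coloring with $M_3=\{0,\pm1\}$.

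There is no real obstacle here. The only care needed is to observe that $2$-colorability forces the entire cycle to be a single $1$-graph, so that Theorem~\ref{Theorem-cycles in Sigma_a} applies directly.
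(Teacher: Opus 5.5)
Your proof is correct. The paper does not actually prove this proposition; it quotes it from Zhang et al., so there is no argument of its own to compare with. Your derivation is the natural one given the Behr results the paper records just before it: the signed Vizing bound and the classification of $\pm a$-colorable signed paths and cycles.

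What your route buys over simply citing Zhang et al.\ is consistency of framework. Zhang et al.\ formulate signed edge coloring in their own way, whereas the paper explicitly works with Behr's incidence definition. Quoting their result therefore tacitly relies on the equivalence of the two definitions, while your argument never leaves Behr's setting.

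Your key observation is exactly right. With $M_2=\{\pm 1\}$, every incidence is colored $\pm 1$, so the whole cycle is $\Sigma_1[\gamma]$, and Behr's cycle theorem applies verbatim. Your explicit $3$-coloring is redundant given the signed Vizing bound, but it is correct: one edge gets $0$ at both ends and the remaining path gets a proper $\pm 1$ coloring.

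If you wanted to avoid citing Behr's cycle theorem as well, its content is a short propagation argument. Fix the color $x_0\in\{\pm1\}$ at the first incidence. Each step around the cycle multiplies the color by the sign of the edge traversed, because of the edge condition together with the requirement that the two incidences at a degree-$2$ vertex receive opposite colors. Hence the coloring closes up consistently if and only if the product of the edge signs is $+1$, that is, if and only if the cycle is balanced.
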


\section{Main results}
\subsection{Signed generalized book graphs}
Throughout this subsection, we assume that $m,n,k$ are positive integers such that $m \geq 3$, $n \geq 2$, $k \geq 2$ and $m-k \geq 1$. Let the vertex set of $B(m,n,k)$ be $V(B(m,n,k)) = \{v_1,\ldots ,v_k\} \cup \{u_{j}^i~|~1 \leq i \leq n,~~ 1 \leq j \leq m-k \}$, and let $v_1v_2...v_k$ be the common path to the cycles $C_{m}^{i}$, where $C_{m}^{i} = v_1u_{1}^{i}u_{2}^{i}...u_{m-k}^{i}v_k...v_1$, for $1 \leq i \leq n$. For example, the cycle $C_{5}^{1}$ in $B(5,3,3)$ is the cycle $v_1u_{1}^{1}u_{2}^{1}v_3v_2v_1$, where the graph $B(5,3,3)$ is shown in Figure~\ref{BG1}.

\begin{figure}[ht]
\centering
\begin{tikzpicture}[scale=0.7]
\node[fill=black, circle, inner sep=1.5pt] (v1) at (9,2) {};
\node [left] at (9,2) {$v_3$};
\node[fill=black, circle, inner sep=1.5pt] (v) at (9,3.6) {};
\node [left] at (9,3.6) {$v_2$};
\node[fill=black, circle, inner sep=1.5pt] (v2) at (9,5.3) {};
\node [left] at (9,5.3) {$v_1$};
\node[fill=black, circle, inner sep=1.5pt] (v3) at (11.5,3.1) {};
\node [right] at (11.5,3.1) {$u_{2}^{3}$};
\node[fill=black, circle, inner sep=1.5pt] (v4) at (11.5,4.2) {};
\node [right] at (11.5,4.2) {$u_{1}^{3}$};
\node[fill=black, circle, inner sep=1.5pt] (v5) at (13,2) {};
\node [right] at (13,2) {$u_{2}^{2}$};
\node[fill=black, circle, inner sep=1.5pt] (v6) at (13,5.3) {};
\node [right] at (13,5.3) {$u_{1}^{2}$};
\node[fill=black, circle, inner sep=1.5pt] (v7) at (14.5,0.9) {};
\node [right] at (14.5,0.9) {$u_{2}^{1}$};
\node[fill=black, circle, inner sep=1.5pt] (v8) at (14.5,6.3) {};
\node [right] at (14.5,6.3) {$u_{1}^{1}$};

\foreach \from/\to in {v/v1,v/v2,v1/v2,v1/v3,v2/v4,v1/v5,v2/v6,v1/v7,v2/v8,v3/v4,v5/v6,v7/v8} \draw (\from) -- (\to);

\end{tikzpicture}
\caption{The generalized book graph $B(5,3,3)$.}
\label{BG1}
\end{figure}
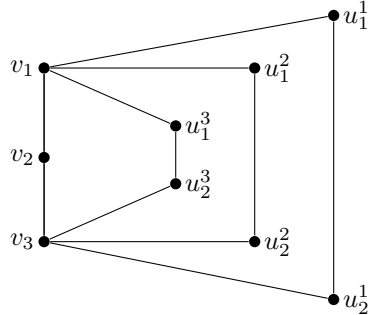

Since the chromatic index of a signed graph is invariant under switching operation and isomorphism, so to compute the chromatic index of all signed generalized book graphs it is enough to determine the chromatic index of switching non-isomorphic signed generalized book graphs. So we first determine the switching non-isomorphic signed generalized book graphs.

\begin{theorem}\label{theorem1}
Let $(B(m,n,k),\sigma)$ be a signed generalized book graph. Then $(B(m,n,k),\sigma)$ is equivalent to $(B(m,n,k),\tau)$, where $\tau^{-1}(-1) \subseteq \{v_1u_{1}^{1},\ldots,v_1u_{1}^{n}\}$. Furthermore, the number of switching non-isomorphic signed $B(m,n,k)$ is $n+1$.  
\end{theorem}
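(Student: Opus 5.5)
Two claims need proof: a normal form for the signature, and the count of switching non-isomorphic classes. For the normal form I would use a spanning tree together with Lemma~\ref{SE-lamma}, or more concretely, switch along a spanning tree. Let $T$ be the spanning tree of $B(m,n,k)$ obtained by deleting the $n$ edges $v_1u_1^1,\ldots,v_1u_1^n$. In $T$, the path $v_1v_2\cdots v_k$ has, hanging from $v_k$, the $n$ paths $v_ku_{m-k}^iu_{m-k-1}^i\cdots u_1^i$. The edge count confirms this is a tree: $|E|=(k-1)+n(m-k+1)$ and $|V|=k+n(m-k)$, so exactly $n$ edges lie outside $T$, and $T$ is connected and acyclic.

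It is standard that any signature can be switched to be positive on a spanning tree. Root $T$ at $v_1$ and switch vertices in BFS order, switching a vertex whenever the edge to its parent is currently negative. This never disturbs edges already processed. The resulting equivalent signature $\tau$ has $\tau^{-1}(-1)\subseteq E(G)\setminus E(T)=\{v_1u_1^1,\ldots,v_1u_1^n\}$, which proves the first claim.

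For the count, let $\tau$ be in this normal form. The cycle $C_m^i$ contains exactly one non-tree edge, namely $v_1u_1^i$, so $C_m^i$ is negative iff $\tau(v_1u_1^i)=-1$. Let $t$ be the number of negative edges among $v_1u_1^1,\ldots,v_1u_1^n$, with $0\le t\le n$. The automorphisms permuting the $n$ "pages" $\{u_1^i,\ldots,u_{m-k}^i\}$ show that two normal forms with the same $t$ are isomorphic. This gives at most $n+1$ classes.

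The main obstacle is showing that different values of $t$ give switching non-isomorphic signed graphs. I would define an invariant preserved by both switching and isomorphism: the number of negative cycles among a canonically defined family. The cycles of $B(m,n,k)$ are the $n$ cycles $C_m^i$ and the $\binom n2$ cycles $D_{ij}$ formed by two pages through $v_1$ and $v_k$, of length $2(m-k+1)$. Switching preserves the set of negative cycles by Lemma~\ref{SE-lamma}. For $k\ge 3$ the vertices $v_2,\ldots,v_{k-1}$ have degree 2 and lie on the common path, and the $C_m^i$ are precisely the cycles through the degree-$\ge 3$ vertices $v_1,v_k$ that use the common path; any automorphism preserves $\{v_1,v_k\}$ and the path between them. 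When $k=2$ the edge $v_1v_2$ plays this role. Hence the number of negative cycles among the $C_m^i$ equals $t$ and is an invariant. The only delicate point is confirming that an automorphism maps the family $\{C_m^i\}$ to itself. This holds because $v_1$ and $v_k$ are the only vertices of degree $n+1\ge3$, the $C_m^i$ are exactly the cycles containing the $v_1$–$v_k$ path $v_1\cdots v_k$ (for $k=2$, the edge $v_1v_2$), and that path is the unique $v_1$–$v_k$ path through degree-2 vertices that is not a page, or is the direct edge. If $m-k+1=k-1$, a page path and the common path have equal length, and an automorphism could swap them. In that case I would instead count negative cycles overall: $C_m^i$ and $D_{ij}$ have sign $s_is_j$ in a symmetric formulation where all $n+1$ internally disjoint $v_1$–$v_k$ paths play equal roles. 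The invariant then becomes the unordered multiset $\{t, n+1-t\}$ up to the choice of reference path, which forces the count to be handled carefully. I expect this symmetric case to be the real sticking point, and I would check it first.
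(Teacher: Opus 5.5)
Your normal-form argument is correct and a little cleaner than the paper's. The paper switches each cycle $C_m^i$ down to at most one negative edge at $v_1$, then switches $v_1$ if $v_1v_2$ is negative. You instead make the signature positive on the spanning tree obtained by deleting the $n$ edges $v_1u_1^i$, which gives $\tau^{-1}(-1)\subseteq\{v_1u_1^1,\ldots,v_1u_1^n\}$ directly. Your upper bound of $n+1$ classes via page permutations is also fine. So is your distinctness argument when $m\neq 2k-2$, provided you justify it by length rather than by \emph{not a page}. The common path has length $k-1\neq m-k+1$, so every automorphism fixes it and therefore permutes the cycles $C_m^i$. Equivalently, these are then exactly the cycles of length $m$, since $D_{ij}$ has length $2(m-k+1)\neq m$. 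Hence the number $t$ of negative $C_m^i$ is a switching-isomorphism invariant.

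The gap is the case $m=2k-2$, which you flag but leave open. It cannot be closed, because the second claim of the statement is false there. All $n+1$ threads between $v_1$ and $v_k$ then have length $k-1$, so automorphisms permute them freely. For $1\le t\le n$, $(B(m,n,k),\sigma_t)$ is switching isomorphic to $(B(m,n,k),\sigma_{n+1-t})$. Switching $v_1$ leaves $v_1v_2$ and the $n-t$ edges $v_1u_1^i$ with $i>t$ negative. An automorphism exchanging the common path with the now-positive first page then produces $n+1-t$ negative pages.

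The smallest instance is $B(4,2,3)\cong K_{2,3}$, which the paper itself colors. Switching $v_1$ in $\sigma_2$ leaves only $v_1v_2$ negative, and the automorphism exchanging $v_2$ and $u_1^1$ turns this into $\sigma_1$. Consistently, both have exactly two negative $4$-cycles. So your multiset $\{t,n+1-t\}$ is a complete invariant there, and the correct count is $\lceil n/2\rceil+1<n+1$ when $m=2k-2$, and $n+1$ otherwise.

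The paper's proof has the same hole. Its step \emph{both signed graphs have different number of negative cycles $C_m$} tacitly assumes that isomorphisms preserve the family $\{C_m^i\}$. But for $m=2k-2$ every cycle has length $m$, and the number of negative ones is $t(n+1-t)$, symmetric under $t\leftrightarrow n+1-t$. The fix is to add the hypothesis $m\neq 2k-2$ or to state the corrected count, not to look for a finer invariant.
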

\begin{proof}
Let $(B(m,n,k),\sigma)$ be a signed generalized book graph. It is clear that every signed cycle $(C_m, \pi)$ is switching equivalent to a signed cycle $(C_m, \pi')$, where the number of negative edges in $(C_n, \pi')$ is at most one. Thus by suitable switchings, if needed, we get $(B(m,n,k),\sigma')$ equivalent to $(B(m,n,k),\sigma)$ so that every negative edge of $(B(m,n,k),\sigma')$ is incident to $v_1$. Further, if the edge $v_1v_2$ is negative in  $(B(m,n,k),\sigma')$, switching $v_1$ will make it positive. Thus we get a signed generalized book graph $(B(m,n,k),\tau)$ equivalent to $(B(m,n,k),\sigma)$, where $\tau^{-1}(-1) \subseteq \{v_1u_{1}^{1},\ldots,v_1u_{1}^{n}\}$. This proves the first part of theorem.

Now let $(B(m,n,k),\sigma)$ and $(B(m,n,k),\tau)$ be any two signed generalized book graphs. By part (i), without loss of generality, we can assume $\sigma^{-1}(-1),\tau^{-1}(-1) \subseteq \{v_1u_{1}^{1},\ldots,v_1u_{1}^{n}\}$. If $|\sigma^{-1}(-1)|=|\tau^{-1}(-1)|$, then an one-one correspondence between $\sigma$ and $\tau$ determines an isomorphism between \linebreak $(B(m,n,k),\sigma)$ and $(B(m,n,k),\tau)$. If $|\sigma^{-1}(-1)|\neq |\tau^{-1}(-1)|$, then $(B(m,n,k),\sigma)$ cannot be switching isomorphic to $(B(m,n,k),\tau)$ because both signed graphs have different number of negative cycles $C_m$.  Therefore, the number of switching non-isomorphic signed $B(m,n,k)$ is $n+1$.
\end{proof}

For each $1 \leq l \leq n$, let $\sigma_{l}^{-1}(-1) = \{v_1u_{1}^{1}, \ldots , v_1u_{1}^{l}\}$. If $\sigma_{0}^{-1}(-1) =\emptyset$, then by the preceding theorem, $\sigma_0,\sigma_1,\ldots, \sigma_n$ are switching non-isomorphic signatures of $B(m,n,k)$. This means, each $(B(m,n,k),\sigma_{l})$ is a representative of $n+1$ switching isomorphism classes of $B(m,n,k)$, where $l=0,1,\ldots , n$. Two switching non-isomorphic signed generalized book graphs $(B(5,3,3),\sigma_1)$ and $(B(5,3,3),\sigma_2)$ are shown in Figure~\ref{Fig-SNI graphs}.

\begin{figure}[ht]
\hspace*{0.5cm}
\begin{subfigure}{0.4\textwidth}
\begin{tikzpicture}[scale=0.5]
\node[fill=black, circle, inner sep=1.5pt] (v1) at (9,2) {};
\node [left] at (9,2) {$v_3$};
\node[fill=black, circle, inner sep=1.5pt] (v) at (9,3.6) {};
\node [left] at (9,3.6) {$v_2$};
\node[fill=black, circle, inner sep=1.5pt] (v2) at (9,5.3) {};
\node [left] at (9,5.3) {$v_1$};
\node[fill=black, circle, inner sep=1.5pt] (v3) at (11.5,3.1) {};
\node [right] at (11.5,3.1) {$u_{2}^{3}$};
\node[fill=black, circle, inner sep=1.5pt] (v4) at (11.5,4.2) {};
\node [right] at (11.5,4.2) {$u_{1}^{3}$};
\node[fill=black, circle, inner sep=1.5pt] (v5) at (13,2) {};
\node [right] at (13,2) {$u_{2}^{2}$};
\node[fill=black, circle, inner sep=1.5pt] (v6) at (13,5.3) {};
\node [right] at (13,5.3) {$u_{1}^{2}$};
\node[fill=black, circle, inner sep=1.5pt] (v7) at (14.5,0.9) {};
\node [right] at (14.5,0.9) {$u_{2}^{1}$};
\node[fill=black, circle, inner sep=1.5pt] (v8) at (14.5,6.3) {};
\node [right] at (14.5,6.3) {$u_{1}^{1}$};

\foreach \from/\to in {v/v1,v/v2,v1/v2,v1/v3,v2/v4,v1/v5,v2/v6,v1/v7,v3/v4,v5/v6,v7/v8} \draw (\from) -- (\to);
\draw [dashed] (9,5.3) -- (14.5,6.3);

\end{tikzpicture}
\caption{The signed graph $(B(5,3,3),\sigma_1)$.} \label{Fig-signed graph B(5,3,3)}
\end{subfigure}
\hfill
\hspace*{-1cm}
\begin{subfigure}{0.4\textwidth}
\begin{tikzpicture}[scale=0.5]
\node[fill=black, circle, inner sep=1.5pt] (v1) at (9,2) {};
\node [left] at (9,2) {$v_3$};
\node[fill=black, circle, inner sep=1.5pt] (v) at (9,3.6) {};
\node [left] at (9,3.6) {$v_2$};
\node[fill=black, circle, inner sep=1.5pt] (v2) at (9,5.3) {};
\node [left] at (9,5.3) {$v_1$};
\node[fill=black, circle, inner sep=1.5pt] (v3) at (11.5,3.1) {};
\node [right] at (11.5,3.1) {$u_{2}^{3}$};
\node[fill=black, circle, inner sep=1.5pt] (v4) at (11.5,4.2) {};
\node [right] at (11.5,4.2) {$u_{1}^{3}$};
\node[fill=black, circle, inner sep=1.5pt] (v5) at (13,2) {};
\node [right] at (13,2) {$u_{2}^{2}$};
\node[fill=black, circle, inner sep=1.5pt] (v6) at (13,5.3) {};
\node [right] at (13,5.3) {$u_{1}^{2}$};
\node[fill=black, circle, inner sep=1.5pt] (v7) at (14.5,0.9) {};
\node [right] at (14.5,0.9) {$u_{2}^{1}$};
\node[fill=black, circle, inner sep=1.5pt] (v8) at (14.5,6.3) {};
\node [right] at (14.5,6.3) {$u_{1}^{1}$};

\foreach \from/\to in {v/v1,v/v2,v1/v2,v1/v3,v2/v4,v1/v5,v1/v7,v3/v4,v5/v6,v7/v8} \draw (\from) -- (\to);
\draw [dashed] (9,5.3) -- (14.5,6.3);
\draw [dashed] (9,5.3) -- (13,5.3);

\end{tikzpicture}
\caption{The signed graph $(B(5,3,3),\sigma_2)$.}\label{Fig- a signed B(5,3,3)}
\end{subfigure}
\caption{Two switching non-isomorphic signed generalized book graphs over $B(5,3,3)$. Throughout, dashed lines represent negative edges and solid lines represent positive edges.}\label{Fig-SNI graphs}
\end{figure}
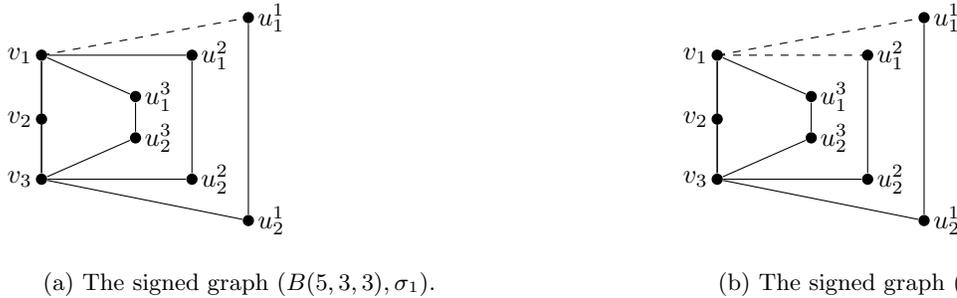

Now we compute the value of $\chi'(B(m,n,k),\sigma_{l})$ for $l=0,1,\ldots , n$.

\begin{theorem}\label{Theorem-chrom index of unsigned GBG}
For signature $\sigma_0$, $\chi'(B(m,n,k),\sigma_{0}) = n+1$.
\end{theorem}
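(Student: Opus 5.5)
Since $\sigma_0$ is the all-positive signature, $(B(m,n,k),\sigma_0)$ is balanced. By the lemma characterizing balance it is therefore switching equivalent to $(B(m,n,k),-)$. By Lemma~\ref{Lemma-invariance of chro index} and Lemma~\ref{Lemma-(G,-) and G have same index} we get $\chi'(B(m,n,k),\sigma_0)=\chi'(B(m,n,k),-)=\chi'(B(m,n,k))$. So the plan is to show that the ordinary graph $B(m,n,k)$ has chromatic index $n+1$.

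\textbf{Lower bound.} The vertices $v_1$ and $v_k$ each have degree $n+1$: $v_1$ is adjacent to $u_1^1,\dots,u_1^n$ and to $v_2$, and similarly for $v_k$. Every other vertex has degree $2$. Hence $\Delta=n+1$, which gives $\chi'\ge n+1$ (also from Theorem~\ref{Vizing's thm}).

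\textbf{Upper bound.} I will build an explicit proper coloring with colors $\{1,\dots,n+1\}$ in four steps.
\begin{enumerate}
\item Color the common path $v_1v_2\cdots v_k$ by alternating the colors $n+1$ and $1$, starting with $v_1v_2$ colored $n+1$. Let $c\in\{1,n+1\}$ be the color this gives to $v_{k-1}v_k$.
\item At $v_1$, give the edge $v_1u_1^i$ color $i$ for $1\le i\le n$.
\item At $v_k$, the $n$ edges $u_{m-k}^iv_k$ must receive the $n$ colors of $\{1,\dots,n+1\}\setminus\{c\}$ bijectively. Choose this bijection $\pi$ so that $\pi(i)\neq i$ for every $i$. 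Such a ``derangement'' exists because $n\ge 2$: cyclically shift the available colors relative to the labels $1,\dots,n$.
\item For each $i$, color the remaining internal path $u_1^iu_2^i\cdots u_{m-k}^i$, which has $m-k-1$ edges, greedily from the $v_1$ end. Each edge avoids the color of the previous edge. The last internal edge must additionally avoid $\pi(i)$.
\end{enumerate}

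\textbf{Main obstacle.} The delicate points are the two ends of each branch. When $m-k=1$, the single vertex $u_1^i$ sees colors $i$ and $\pi(i)$ only, and these differ precisely because of the derangement. When $m-k\ge 2$, every internal edge faces at most two forbidden colors, and we have $n+1\ge 3$ colors. So the greedy choice always succeeds, including the last edge, which must avoid the previous edge's color and $\pi(i)$.

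With the endpoints handled, every vertex sees distinct colors. At $v_1$ these are $1,\dots,n,n+1$. At $v_k$ they are $c$ together with the $n$ colors $\pi(1),\dots,\pi(n)$, which are exactly $\{1,\dots,n+1\}\setminus\{c\}$. Degree-$2$ vertices on the spine see distinct colors by the alternation. Hence $\chi'(B(m,n,k))\le n+1$, which completes the proof.
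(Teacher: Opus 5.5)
\textbf{Gap in the reduction step.} Your opening reduction fails when $m$ is odd. The balance lemma says a balanced signed graph is switching equivalent to $(G,+)$, which $(B(m,n,k),\sigma_0)$ already is. It does not say it is equivalent to $(G,-)$. The all-negative signed graph $(G,-)$ is balanced only when $G$ is bipartite. For odd $m$ the cycles $C_m^i$ are odd, so they are positive under $\sigma_0$ but negative under the all-negative signature. By Zaslavsky's characterization the two signed graphs are therefore not switching equivalent; the paper itself notes that for odd $m$ the all-negative book lies in the class of $\sigma_n$.

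The equality $\chi'(G,+)=\chi'(G)$ that your chain relies on is false in general. For example, $\chi'(C_3,+)=2$ by the result on signed cycles, while $\chi'(C_3)=3$. So for odd $m$ your argument establishes $\chi'(B(m,n,k))=n+1$ for the unsigned graph, but nothing yet about $(B(m,n,k),\sigma_0)$. For even $m$ the book is bipartite (every cycle has length $m$ or $2(m-k+1)$), and there your reduction is valid.

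\textbf{How to repair it.} Only the upper bound needs transferring, and $\chi'(G,+)\le\chi'(G)$ holds for every graph $G$. Take an ordinary proper coloring with colors $1,\dots,q$. The union of the classes $2a-1$ and $2a$ has maximum degree $2$. Every cycle of $(G,+)$ is positive, so each component (a path or a cycle) has a proper $\pm a$ coloring by Behr's theorems on signed paths and cycles. If $q$ is odd, the last class is a matching and receives color $0$ at both ends.

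Apply this to your explicit $(n+1)$-coloring of $B(m,n,k)$. Its verification is correct, including the derangement at $v_k$ and the greedy interior. This gives a proper $(n+1)$-edge coloring of $(B(m,n,k),\sigma_0)$ for all $m$, and $\chi'\ge\Delta=n+1$ holds for any signature.

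With that fix, your route differs from the paper's, which never passes through the unsigned graph. The paper writes down colorings of the all-positive book directly in $M_{n+1}$, using alternating $\pm c$ patterns along each branch and a case split on the parity of $n$. Your version is more modular, since any ordinary $\Delta$-edge coloring of the book suffices. The paper's is self-contained but tied to its specific patterns.
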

\begin{proof}
We discuss two cases separately.

\noindent
\textbf{Case 1.} Suppose $n=2r$ for some integer $r \geq 1$. In this case, we have that $\Delta(B(m,2r,k))=2r+1$. Thus, $\chi' (B(m,2r,k),\sigma_{0}) \geq 2r+1$ by Theorem~\ref{Vizing's thm}. Now we give a proper $(2r+1)$-edge coloring $\gamma$ of $(B(m,2r,k),\sigma_{0})$ as follows (see Figure~\ref{Fig-coloring of B(4,2,3)} for the case $m=4,n=2,k=3$). 
\begin{itemize}
\item[1.] We sequentially color the incidences of the path $v_1v_2 \cdots v_{k}$ using the pattern $((-1)(1)) \ldots ((-1)(1))$.
\item[2.] We sequentially color the incidences of path $v_1u_{1}^{1} \cdots u_{m-k}^{1}$ using the pattern $((1)(-1)) \ldots ((1)(-1))$. For the edge $u_{m-k}^{1}v_k$, we set $\gamma (u_{m-k}^{1},u_{m-k}^{1}v_k) = \gamma (v_k,v_ku_{m-k}^{1}) =0$.

\item[3.] For the edge $v_1u_{1}^{2}$, we set $\gamma (v_1,v_1u_{1}^{2}) = \gamma (u_{1}^{2},u_{1}^{2}v_1) =0$. Now we sequentially color the incidences of the path $u_{1}^{2} \cdots u_{m-k}^{2}v_k$ using the pattern $((1)(-1)) \ldots ((1)(-1))$.

\item[4.] Finally, for $ i = 3,\ldots , 2r$, we sequentially color the incidences of the path $v_1u_{1}^{i} \cdots u_{m-k}^{i}v_k$ using the pattern $((\frac{i+1}{2})(-\frac{i+1}{2})) \ldots ((\frac{i+1}{2})(-\frac{i+1}{2}))$ and $((-\frac{i}{2})(\frac{i}{2})) \ldots ((-\frac{i}{2})(\frac{i}{2}))$ according as $i$ is odd and even, respectively. (This step is needed only when $r \geq 2$.)
\end{itemize}

The so-obtained coloring $\gamma$ is clearly a proper edge coloring of $(B(m,2r,k),\sigma_{0})$ for all $m \geq 3$ and $k \geq 2$.

\noindent
\textbf{Case 2.} Suppose $n=2r-1$ for some integer $r \geq 2$. In this case, we have that $\Delta(B(m,2r-1,k))=2r$. Thus, $\chi' (B(m,2r-1,k),\sigma_{0}) \geq 2r$ by Theorem~\ref{Vizing's thm}. Now we give a proper $(2r)$-edge coloring $\gamma$ of $(B(m,2r-1,k),\sigma_{0})$ as follows (see Figure~\ref{Fig-coloring of B(5,3,3)} for the case $m=5,n=3,k=3$). 

\begin{itemize}
\item[1.] We sequentially color the incidences of the path $v_1v_2 \cdots v_{k}$ using the pattern $((-r)(r)) \ldots ((-r)(r))$.
\item[2.] For $ i = 1,\ldots , 2r-1$, we sequentially color the incidences of the path $v_1u_{1}^{i} \cdots u_{m-k}^{i}v_k$ using the pattern $((\frac{i+1}{2})(-\frac{i+1}{2})) \ldots ((\frac{i+1}{2})(-\frac{i+1}{2}))$ and $((-\frac{i}{2})(\frac{i}{2})) \ldots ((-\frac{i}{2})(\frac{i}{2}))$ according as $i$ is odd and even, respectively.
\end{itemize}

The so-obtained coloring $\gamma$ is clearly a proper edge coloring of $(B(m,2r-1,k),\sigma_{0})$ for all $m \geq 3$ and $k \geq 2$.

From Cases 1 and 2, we conclude that $\chi'(B(m,n,k),\sigma_{0}) = n+1$ for all $m \geq 3$, $n \geq 2$ and $k \geq 2$.
\end{proof}

\begin{figure}[ht]
\hspace*{0.5cm}
\begin{subfigure}{0.3\textwidth}
\begin{tikzpicture}[scale=0.7]
\node[fill=black, circle, inner sep=1.5pt] (v1) at (9,0) {};
\node[fill=black, circle, inner sep=1.5pt] (v2) at (9,3) {};
\node[fill=black, circle, inner sep=1.5pt] (v3) at (9,6) {};
\node[fill=black, circle, inner sep=1.5pt] (v4) at (10.5,3) {};
\node[fill=black, circle, inner sep=1.5pt] (v5) at (13.5,3) {};

\foreach \from/\to in {v1/v2,v2/v3,v1/v4,v1/v5,v3/v4,v3/v5} \draw (\from) -- (\to);

\node [right] at (9.8,5.6) {$1$};
\node [right] at (9.5,4.9) {$0$};

\node [left] at (9,5.5) {-$1$};
\node [left] at (9,3.5) {$1$};
\node [left] at (9,2.5) {-$1$};
\node [left] at (9,0.5) {$1$};

\node [above] at (10.6,3.2) {$0$};
\node [above] at (13.3,3.3) {-$1$};

\node [below] at (10.5,2.8) {$1$};
\node [below] at (13.3,2.9) {$0$};

\node [right] at (9.8,0.4) {$0$};
\node [right] at (9.4,1) {-$1$};

\end{tikzpicture}
\caption{Proper 3-edge coloring of $(B(4,2,3), \sigma_0)$.} \label{Fig-coloring of B(4,2,3)}
\end{subfigure}
\hfill
\hspace*{-1cm}
\begin{subfigure}{0.3\textwidth}
\begin{tikzpicture}[scale=0.6]
\node[fill=black, circle, inner sep=1.5pt] (v1) at (9,0) {};
\node[fill=black, circle, inner sep=1.5pt] (v) at (9,3) {};
\node[fill=black, circle, inner sep=1.5pt] (v2) at (9,6) {};
\node[fill=black, circle, inner sep=1.5pt] (v3) at (10.5,2) {};
\node[fill=black, circle, inner sep=1.5pt] (v4) at (10.5,4) {};
\node[fill=black, circle, inner sep=1.5pt] (v5) at (12,1) {};
\node[fill=black, circle, inner sep=1.5pt] (v6) at (12,5) {};
\node[fill=black, circle, inner sep=1.5pt] (v7) at (13.5,-0.3) {};
\node[fill=black, circle, inner sep=1.5pt] (v8) at (13.5,6.3) {};

\foreach \from/\to in {v/v1,v/v2,v1/v2,v1/v3,v2/v4,v1/v5,v2/v6,v1/v7,v2/v8,v3/v4,v5/v6,v7/v8} \draw (\from) -- (\to);


\node [above] at (9.6,6) {$1$};
\node [below] at (10,5.65) {-$1$};
\node [below] at (9.3,5.45) {$2$};

\node [left] at (9,5.5) {-$2$};
\node [left] at (9,3.5) {$2$};
\node [left] at (9,2.5) {-$2$};
\node [left] at (9,0.5) {$2$};

\node [above] at (13,6.2) {-$1$};
\node [below] at (11.45,5.25) {$1$};
\node [below] at (9.8,4.7) {-$2$};

\node [below] at (9.6,0) {-$1$};
\node [above] at (9.8,0.2) {$1$};
\node [above] at (9.4,0.65) {-$2$};

\node [below] at (13,-0.2) {$1$};
\node [above] at (11.45,0.75) {-$1$};
\node [above] at (10,1.4) {$2$};

\node [right] at (13.5,5.8) {$1$};
\node [right] at (12,4.5) {-$1$};
\node [right] at (10.5,3.5) {$2$};

\node [right] at (13.5,0.2) {-$1$};
\node [right] at (12,1.5) {$1$};
\node [right] at (10.5,2.5) {-$2$};

\end{tikzpicture}
\caption{Proper 4-edge coloring of $(B(5,3,3), \sigma_0)$.}\label{Fig-coloring of B(5,3,3)}
\end{subfigure}
\caption{Proper edge colorings of all-positive graphs over $B(4,2,3)$ and $B(5,3,3)$.}\label{Fig-edge colorings of B(4,2,3) and B(5,3,3)}
\end{figure}
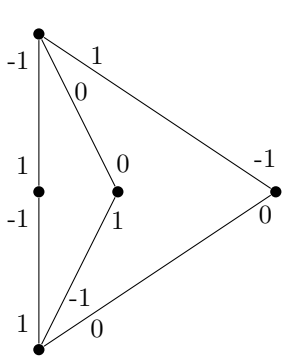
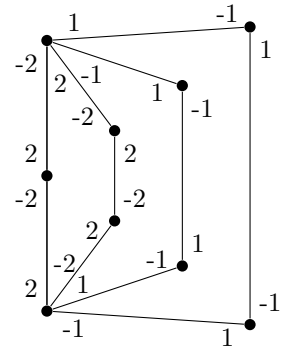

Now we determine the value of $\chi'(B(m,n,k),\sigma_{n})$ for all $m \geq 3$, $n \geq 2$ and $k \geq 2$.

By Lemma~\ref{Lemma-(G,-) and G have same index}, it is obvious that $\chi'(B(m,n,k)) =\chi'(B(m,n,k), -)$. But it is important to note that $(B(m,n,k), -)$ is switching equivalent to $(B(m,n,k), +)$ or $(B(m,n,k), \{v_1v_2\})$ according as $m$ is even or odd, respectively. Furthermore, by switching $v_1$ of $(B(m,n,k), \{v_1v_2\})$ we get $(B(m,n,k), \sigma_{n})$. These facts imply that $(B(m,n,k), -)$ is equivalent to $(B(m,n,k), \sigma_{n})$ for odd values of $m$. Thus we have the following theorem which directly follows from Lemma~\ref{Lemma-(G,-) and G have same index}, Lemma~\ref{Lemma-invariance of chro index} and Theorem~\ref{Theorem-chrom index of unsigned GBG}.

\begin{theorem}
For any odd $m \geq 3$, $\chi'(B(m,n,k),\sigma_{n}) = n+1$.
\end{theorem}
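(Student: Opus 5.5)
The plan is to transfer the value already computed for the all-positive signature to $\sigma_n$ through the all-negative signature. The chain is
\[
\chi'(B(m,n,k),\sigma_n)=\chi'(B(m,n,k),-)=\chi'(B(m,n,k))=\chi'(B(m,n,k),\sigma_0)=n+1 .
\]

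\textbf{Step 1: $(B(m,n,k),-)\sim(B(m,n,k),\sigma_n)$ for odd $m$.} I would check this with Lemma~\ref{SE-lamma} by comparing negative cycles. The cycles of $B(m,n,k)$ come in two types.
\begin{itemize}
\item The $n$ cycles $C_m^i$, each of length $m$.
\item The cycles $v_1u_1^i\cdots u_{m-k}^iv_ku_{m-k}^j\cdots u_1^jv_1$ for $i\neq j$, each of length $2(m-k+1)$.
\end{itemize}
In the all-negative signature, each $C_m^i$ is negative because $m$ is odd. Each cycle of the second type has even length, so it is positive.

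Under $\sigma_n$, the cycle $C_m^i$ contains exactly one negative edge, $v_1u_1^i$, so it is negative. A cycle of the second type contains exactly two negative edges, $v_1u_1^i$ and $v_1u_1^j$, so it is positive.

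Thus the two signatures have the same set of negative cycles, and Lemma~\ref{SE-lamma} gives the equivalence. Equivalently, one can use the paper's explicit chain $-\sim\{v_1v_2\}$ followed by switching $v_1$. In that chain, switching $v_1$ makes $v_1v_2$ positive and all $v_1u_1^i$ negative, which is exactly $\sigma_n$.

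\textbf{Step 2: apply the earlier lemmas.} By Lemma~\ref{Lemma-invariance of chro index}, $\chi'(B(m,n,k),\sigma_n)=\chi'(B(m,n,k),-)$. By Lemma~\ref{Lemma-(G,-) and G have same index}, this equals the ordinary chromatic index $\chi'(B(m,n,k))$.

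\textbf{Step 3: identify the ordinary chromatic index.} The underlying graph $B(m,n,k)$ is bipartite exactly when $m$ is even, so for odd $m$ König's theorem is unavailable. Instead, I would note that $\sigma_0$ is all-positive, and all-positive colorings relate to ordinary colorings as follows. Given a proper ordinary $q$-edge coloring, pair the color classes and orient each edge consistently to assign incidence colors $\pm c$. This works whenever each $\pm c$ class forms paths and even cycles. That condition does not hold automatically, so I would not rely on this route.

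Instead, I would use $\Delta=n+1$ together with an explicit ordinary $(n+1)$-edge coloring:
\begin{itemize}
\item Color the spine $v_1\cdots v_k$ alternately with colors $n+1$ and $n$.
\item Color the $n$ ears with colors $1,\dots,n$, each ear receiving two alternating colors.
\item Adjust at $v_1$ and $v_k$ so that the colors used there are distinct.
\end{itemize}
This is straightforward because every ear has length $m-k+1\ge 2$. Hence $\chi'(B(m,n,k))=n+1$, and with Steps 1 and 2 this gives $\chi'(B(m,n,k),\sigma_n)=n+1$.

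\textbf{Main obstacle.} The step needing care is Step 1, specifically confirming that the cycles of the second type receive matching signs under both signatures. This is what forces the hypothesis that $m$ is odd.
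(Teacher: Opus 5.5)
Your overall route is the paper's. You show $(B(m,n,k),-)\sim(B(m,n,k),\sigma_n)$ for odd $m$, then apply Lemma~\ref{Lemma-invariance of chro index} and Lemma~\ref{Lemma-(G,-) and G have same index} to reduce to the ordinary chromatic index. Your Step~1 via Lemma~\ref{SE-lamma} is correct. Every cycle uses exactly two of the $n+1$ internally disjoint $v_1$--$v_k$ paths, so your two types are all the cycles, and this step is interchangeable with the paper's explicit switching.

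The genuine difference is the last step. The paper closes by citing Theorem~\ref{Theorem-chrom index of unsigned GBG}. That result concerns the all-positive signature $\sigma_0$, not the unsigned graph, and $\chi'(G,+)$ can be smaller than $\chi'(G)$ (e.g.\ $\chi'(C_3,+)=2<3=\chi'(C_3)$). In fact, for odd $n$ the $\pm r$-class of the coloring built there is $C_m^{2r-1}$, an odd cycle when $m$ is odd, so it yields no ordinary coloring. You were right not to rely on that link and to compute $\chi'(B(m,n,k))=n+1$ directly; this fills a step the paper leaves unjustified. The conversion you considered and dropped, from ordinary to all-positive colorings, always works, but it only gives $\chi'(G,+)\le\chi'(G)$, which is the wrong direction.

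Your explicit coloring does need tightening, because taken literally it fails for odd $k\ge 3$. The spine then has even length, so its edge at $v_k$ gets color $n$, and some ear must end at $v_k$ in color $n+1\notin\{1,\dots,n\}$. Here is a clean version:
\begin{itemize}
\item Use colors $0,\dots,n$. Let $P_0$ be the spine (length $k-1$) and $P_1,\dots,P_n$ the ears (length $m-k+1\ge 2$).
\item Give $P_i$ color $i$ at $v_1$ and $\pi(i)$ at $v_k$. Take $\pi$ to be a cyclic shift of $\{0,\dots,n\}$ if $k\ge 3$; if $k=2$, let $\pi$ fix $0$ and cyclically shift $\{1,\dots,n\}$.
\item Then each path of length $2$ has distinct end colors, and the single spine edge for $k=2$ has equal ones.
\item Any path of length at least $3$ with prescribed end colors can be completed greedily from three colors.
\end{itemize}
Alternatively, Fournier's theorem applies, since the maximum-degree vertices $v_1,v_k$ induce a forest.

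Finally, a small correction to your closing remark. Oddness of $m$ is what makes each $C_m^i$ negative under the all-negative signature. The ear--ear cycles are positive under both signatures for every $m$, so they are not where the hypothesis is used.
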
 

It remains to compute the value $\chi'(B(m,n,k),\sigma_{n})$ for even values of $m$. We do it in the next result.

\begin{theorem}
For any even $m \geq 4$, $\chi'(B(m,n,k),\sigma_{n}) = n+1$.
\end{theorem}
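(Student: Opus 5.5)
The lower bound is immediate: $\deg(v_1)=\deg(v_k)=n+1=\Delta(B(m,n,k))$, so Theorem~\ref{Vizing's thm} gives $\chi'(B(m,n,k),\sigma_n)\ge n+1$. The work is to construct a proper $(n+1)$-edge coloring. I would first replace $\sigma_n$ by a more convenient equivalent signature and then invoke Lemma~\ref{Lemma-invariance of chro index}. Switching $v_1$ in $(B(m,n,k),\sigma_n)$ makes every edge $v_1u_1^i$ positive and the spine edge $v_1v_2$ negative. So it suffices to color $(B(m,n,k),\{v_1v_2\})$, in which every page path $v_1u_1^i\cdots u_{m-k}^iv_k$ is positive and only the first spine edge is negative.

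I would adapt the two constructions in the proof of Theorem~\ref{Theorem-chrom index of unsigned GBG} and keep track of parity. Each page path has $m-k+1$ edges and the spine has $k-1$ edges. Since $m$ is even, these two lengths have the same parity. For a positive path colored alternately with $\pm c$, the color seen at the far end is determined by the color at the start and by this parity. A single negative edge on the spine flips the sign it delivers at $v_k$ compared with the all-positive situation. This flip is exactly what made the construction for $\sigma_0$ work when $m$ was odd. So for even $m$ the naive patterns collide at $v_k$, and the task is to reroute one color class.

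\textbf{Case $n=2r-1$ (colors $\pm1,\dots,\pm r$, no $0$).} Color page $i$ as in Case 2 with $\pm\lceil i/2\rceil$, except for the pair of classes $\pm r$. Give $v_1v_2$ the color $-r$ at both incidences and continue along the spine alternately. Then check the value arriving at $v_k$ against the values that the pages occupy at $v_k$. If page $2r-1$ and the spine both deliver the same value at $v_k$, I would use the freedom given by Theorem~\ref{Theorem-paths in Sigma_a}: every path has exactly two $\pm a$ colorings. Concretely, I would recolor page $2r-1$ and one page $2j-1$ by exchanging their colors at $v_1$, so that $\pm r$ and $\pm j$ are redistributed. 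Equivalently, I would put the spine into a color class shared with a page whose two endpoints see opposite signs, so that the constraints at $v_1$ and at $v_k$ are met simultaneously.

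\textbf{Case $n=2r$ (colors $0,\pm1,\dots,\pm r$).} Here the color $0$ is available. I would color the negative edge $v_1v_2$ with $0$ and the rest of the spine alternately with $\pm1$. The pages would be rearranged so that none of them uses $0$ at $v_1$. For example, the $0$ would be placed on the last edge $u_{m-k}^1v_k$ of page $1$, and the remaining pages would be colored in pairs $\pm1,\dots,\pm r$, with their orientations chosen according to the parities of $k-1$ and $m-k+1$.

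The main obstacle is the bookkeeping at $v_k$. The orientations of the alternating patterns must make the $n+1$ incidences at $v_k$ distinct while the incidences at $v_1$ and at the internal spine vertices also stay distinct. I would settle this by writing out the end-color of each path as a function of the parity of $k$ (the parity of $m-k$ then follows, since $m$ is even). I would then choose the orientation of each page separately in the subcases $k$ even and $k$ odd, and verify small instances such as $B(4,2,2)$, $B(4,2,3)$ and $B(6,3,3)$ by hand.
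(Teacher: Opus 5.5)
The lower bound and the switch to $(B(m,n,k),\{v_1v_2\})$ match the paper. However, your construction for $n=2r-1$ cannot work as planned. In $(B(m,n,k),\{v_1v_2\})$ every page is an all-positive path, so coloring it from a single class $\pm a$ puts some $c$ at $v_1$ and $-c$ at $v_k$, whatever its length. The spine contains exactly one negative edge, so a single-class coloring gives it the same color $c_S$ at both ends. Since $\deg v_1=\deg v_k=2r=|M_{2r}|$, each of these vertices sees every color exactly once. If the pages occupy $M_{2r}\setminus\{c_S\}$ at $v_1$, they occupy $M_{2r}\setminus\{-c_S\}$ at $v_k$. That set contains $c_S$, because $c_S\neq -c_S$ when $0\notin M_{2r}$.

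So the collision at $v_k$ is forced, not conditional. Every repair you list keeps each of the $n+1$ paths monochromatic in one class, and therefore fails:
\begin{itemize}
\item flipping a page to its other $\pm a$ coloring (Theorem~\ref{Theorem-paths in Sigma_a});
\item exchanging the classes of pages $2r-1$ and $2j-1$;
\item putting the spine in a class shared with a page.
\end{itemize}
The missing idea is that some path must change color class at an internal vertex of degree $2$. The paper does this by swapping the last edges of pages $1$ and $2r-1$:
\begin{itemize}
\item page $1$ is colored $1,-1$ up to $u^1_{m-k}$, and $u^1_{m-k}v_k$ gets $-r$ at $u^1_{m-k}$ and $r$ at $v_k$;
\item page $2r-1$ is colored $r,-r$ up to $u^{2r-1}_{m-k}$, and its last edge gets $1$ and $-1$;
\item the spine gets $-r$ on $v_1v_2$ and then $r,-r$;
\item pages $2,\ldots,2r-2$ keep the classes $\pm\lceil i/2\rceil$ with the orientations of the $\sigma_0$ construction.
\end{itemize}
This uses $r\geq 2$, so that $-1\neq -r$ at $u^1_{m-k}$.

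Your parity bookkeeping is also mistaken. An all-positive path colored alternately with $\pm c$ always ends in $-c$, independent of its length; length parity matters only in the all-negative (unsigned) model. Hence the evenness of $m$ plays no role in coloring $(B(m,n,k),\{v_1v_2\})$. It only explains why odd $m$ was already settled by equivalence with $(B(m,n,k),-)$. Choosing page orientations according to the parities of $k-1$ and $m-k+1$ therefore cannot resolve anything.

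Your plan for $n=2r$ is essentially the paper's Subcase 1.2 and is correct for $k\geq 3$. Note that the edge $v_2v_3$ must start with the same color $a$ that page $1$ has at $v_1$; otherwise the spine collides at $v_k$ with the page carrying $-a$ at $v_1$. For $k=2$ your plan breaks, since $v_k=v_2$ would receive $0$ from both $v_1v_2$ and $u^1_{m-k}v_k$. There you should give $v_1v_2$ the color $0$ and color every page with a single class $\pm\lceil i/2\rceil$. This works exactly because $0=-0$, which is the paper's Subcase 1.1.
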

\begin{proof}
By switching $v_1$ in $(B(m,n,k),\sigma_{n})$, we get signed generalized book graph $(B(m,n,k), \{v_1v_2\})$. Thus, due to Lemma~\ref{Lemma-invariance of chro index}, instead of finding the value of $\chi'(B(m,n,k),\sigma_{n})$, we will find the value of $\chi'(B(m,n,k), \{v_1v_2\})$. Obviously, $\chi'(B(m,n,k), \{v_1v_2\}) \geq n+1$ since $\Delta ((B(m,n,k)) = n+1$. 
We discuss two cases separately.

\noindent
\textbf{Case 1.} Suppose $n=2r$ for some integer $r \geq 1$. In this case, we have that $\Delta(B(m,2r,k))=2r+1$. Thus, $\chi' (B(m,2r,k), \{v_1v_2\}) \geq 2r+1$ by Theorem~\ref{Vizing's thm}. Now we give a proper $(2r+1)$-edge coloring $\gamma$ of $(B(m,2r,k), \{v_1v_2\})$.

\textbf{Subcase 1.1.} If $k = 2$, then $\gamma$ is defined as follows (see Figure~\ref{Fig-coloring of B(4,2,2)} for the case $m=4,n=2,k=2$). 
\begin{itemize}
\item[1.] For the edge $v_1v_2$, we set $\gamma (v_1,v_1v_2) = \gamma (v_2,v_2v_1) =0$.
\item[2.] For $ i = 1,\ldots , 2r$, we sequentially color the incidences of the path $v_1u_{1}^{i} \cdots u_{m-k}^{i}v_k$ using the pattern $((\frac{i+1}{2})(-\frac{i+1}{2})) \ldots ((\frac{i+1}{2})(-\frac{i+1}{2}))$ and $((-\frac{i}{2})(\frac{i}{2})) \ldots ((-\frac{i}{2})(\frac{i}{2}))$ according as $i$ is odd and even, respectively. 
\end{itemize}

\textbf{Subcase 1.2.} If $k \geq 3$, then $\gamma$ is defined as follows (see Figure~\ref{Fig-coloring of B(6,2,3)} for the case $m=6,n=2,k=3$). 
\begin{itemize}
\item[1.] For the edge $v_1v_2$, we set $\gamma (v_1,v_1v_2) = \gamma (v_2,v_2v_1) =0$. 
\item[2.] For $j=2,\ldots , k-1$, incidences $(v_{j},v_{j}v_{j+1})$ and $(v_{j+1},v_{j+1}v_{j})$ are colored with 1 and $-1$, respectively.
\item[3.] We sequentially color the incidences of path $v_1u_{1}^{1} \cdots u_{m-k}^{1}$ using the pattern $((1)(-1)) \ldots ((1)(-1))$. For the edge $u_{m-k}^{1}v_k$, we set $\gamma (u_{m-k}^{1},u_{m-k}^{1}v_k) = \gamma (v_k,v_ku_{m-k}^{1}) =0$.
\item[4.] For $ i = 2,\ldots , 2r$, we sequentially color the incidences of the path $v_1u_{1}^{i} \cdots u_{m-k}^{i}v_k$ using the pattern $((-\frac{i}{2})(\frac{i}{2})) \ldots ((-\frac{i}{2})(\frac{i}{2}))$ and $((\frac{i+1}{2})(-\frac{i+1}{2})) \ldots ((\frac{i+1}{2})(-\frac{i+1}{2}))$  according as $i$ is even and odd, respectively.
\end{itemize} 

The so-obtained coloring $\gamma$ is clearly a proper edge coloring of $(B(m,2r,k),\{v_1v_2\})$ for all $m \geq 3$ and $k \geq 2$.

\noindent
\textbf{Case 2.} Suppose $n=2r-1$ for some integer $r \geq 2$. In this case, we have that $\Delta(B(m,2r-1,k))=2r$. Thus, $\chi' (B(m,2r-1,k), \{v_1v_2\}) \geq 2r$ by Theorem~\ref{Vizing's thm}. Now we give a proper $(2r)$-edge coloring $\gamma$ of $(B(m,2r-1,k), \{v_1v_2\})$ as follows (see Figure~\ref{Fig-coloring of B(6,3,4)} for the case $m=6,n=3,k=4$). 
\begin{itemize}
\item[1.] For the edge $v_1v_2$, we set $\gamma (v_1,v_1v_2) = \gamma (v_2,v_2v_1) =-r$ and if $k \geq 3$, then we color the incidences of $v_2v_3 \ldots v_{k-1}v_k$ using the pattern $((r)(-r)) \ldots ((r)(-r))$.
\item[2.] We sequentially color the incidences of path $v_1u_{1}^{1} \cdots u_{m-k}^{1}$ using the pattern $((1)(-1)) \ldots ((1)(-1))$. For the edge $u_{m-k}^{1}v_k$, we set $\gamma (u_{m-k}^{1},u_{m-k}^{1}v_k) = -r$ and $\gamma (v_k,v_ku_{m-k}^{1}) =r$. 
\item[3.] For $ i = 2,\ldots , 2r-2$, we sequentially color the incidences of the path $v_1u_{1}^{i} \cdots u_{m-k}^{i}v_k$ using the pattern $((-\frac{i}{2})(\frac{i}{2})) \ldots ((-\frac{i}{2})(\frac{i}{2}))$ and $((\frac{i+1}{2})(-\frac{i+1}{2})) \ldots ((\frac{i+1}{2})(-\frac{i+1}{2}))$ according as $i$ is even and odd, respectively.
\item[4.] We sequentially color the incidences of the path $v_1u_{1}^{2r-1} \cdots u_{m-k}^{2r-1}$ using the pattern \linebreak $((r)(-r)) \ldots ((r)(-r))$. For the edge $u_{m-k}^{2r-1}v_k$, we set $\gamma (u_{m-k}^{2r-1},u_{m-k}^{2r-1}v_k) = 1$ and $\gamma (v_k,v_ku_{m-k}^{2r-1}) = -1$. 
\end{itemize}
The so-obtained coloring $\gamma$ is clearly a proper edge coloring of $(B(m,2r-1,k),\{v_1v_2\})$ for all even $m \geq 4$ and $k \geq 2$. This completes the proof.
\end{proof}

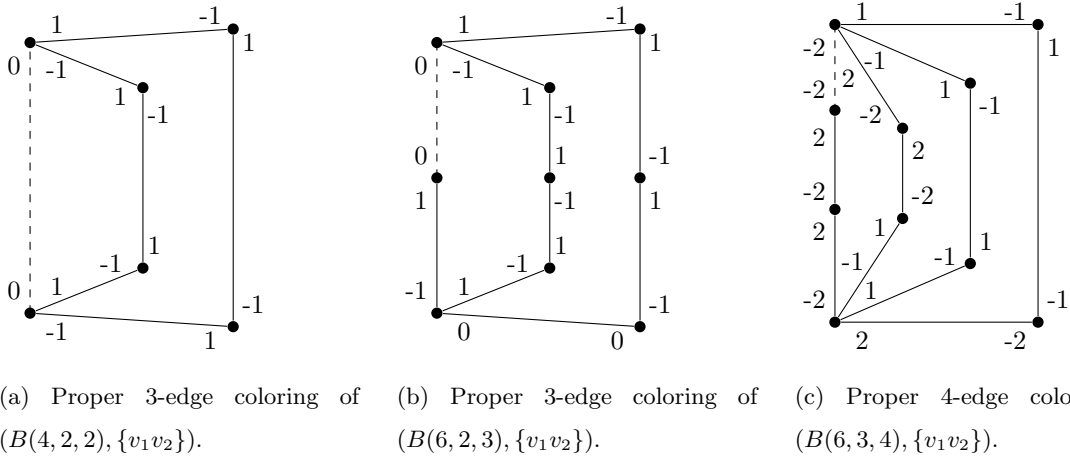
\begin{figure}[ht]
\hspace*{0.5cm}
\begin{subfigure}{0.3\textwidth}
\begin{tikzpicture}[scale=0.6]
\node[fill=black, circle, inner sep=1.5pt] (v1) at (9,0) {};
\node[fill=black, circle, inner sep=1.5pt] (v2) at (9,6) {};
\node[fill=black, circle, inner sep=1.5pt] (v3) at (11.5,1) {};
\node[fill=black, circle, inner sep=1.5pt] (v4) at (11.5,5) {};
\node[fill=black, circle, inner sep=1.5pt] (v5) at (13.5,-0.3) {};
\node[fill=black, circle, inner sep=1.5pt] (v6) at (13.5,6.3) {};

\foreach \from/\to in {v1/v3,v2/v4,v1/v5,v2/v6,v3/v4,v5/v6} \draw (\from) -- (\to);
\draw [dashed] (9,0) -- (9,6);

\node [left] at (9,5.5) {$0$};
\node [left] at (9,0.5) {$0$};

\node [above] at (9.6,6) {$1$};
\node [above] at (13,6.2) {-$1$};
\node [right] at (13.5,6) {$1$};
\node [right] at (13.5,0.2) {-$1$};
\node [below] at (13,-0.2) {$1$};
\node [below] at (9.6,0) {-$1$};

\node [above] at (9.6,5) {-$1$};
\node [below] at (11,5.2) {$1$};
\node [right] at (11.4,4.4) {-$1$};
\node [right] at (11.4,1.5) {$1$};
\node [above] at (10.8,0.7) {-$1$};
\node [above] at (9.6,0.2) {$1$};

\end{tikzpicture}
\caption{Proper 3-edge coloring of $(B(4,2,2),\{v_1v_2\})$.} \label{Fig-coloring of B(4,2,2)}
\end{subfigure}
\hfill
\hspace*{-1cm}
\begin{subfigure}{0.3\textwidth}
\begin{tikzpicture}[scale=0.6]
\node[fill=black, circle, inner sep=1.5pt] (v1) at (9,0) {};
\node[fill=black, circle, inner sep=1.5pt] (v2) at (9,3) {};
\node[fill=black, circle, inner sep=1.5pt] (v3) at (9,6) {};
\node[fill=black, circle, inner sep=1.5pt] (v4) at (11.5,1) {};
\node[fill=black, circle, inner sep=1.5pt] (v5) at (11.5,3) {};
\node[fill=black, circle, inner sep=1.5pt] (v6) at (11.5,5) {};
\node[fill=black, circle, inner sep=1.5pt] (v7) at (13.5,-0.3) {};
\node[fill=black, circle, inner sep=1.5pt] (v8) at (13.5,3) {};
\node[fill=black, circle, inner sep=1.5pt] (v9) at (13.5,6.3) {};

\foreach \from/\to in {v1/v2,v1/v4,v1/v7,v3/v6,v3/v9,v4/v5,v5/v6,v7/v8,v8/v9} \draw (\from) -- (\to);
\draw [dashed] (9,6) -- (9,3);

\node [left] at (9,5.5) {$0$};
\node [left] at (9,3.5) {$0$};
\node [left] at (9,2.5) {$1$};
\node [left] at (9,0.5) {-$1$};

\node [above] at (9.6,6) {$1$};
\node [above] at (13,6.2) {-$1$};
\node [right] at (13.5,6) {$1$};
\node [right] at (13.5,3.5) {-$1$};
\node [right] at (13.5,2.5) {$1$};
\node [right] at (13.5,0.2) {-$1$};
\node [below] at (13,-0.2) {$0$};
\node [below] at (9.6,0) {$0$};

\node [above] at (9.6,5) {-$1$};
\node [below] at (11,5.2) {$1$};
\node [right] at (11.4,4.4) {-$1$};
\node [right] at (11.4,3.5) {$1$};
\node [right] at (11.4,2.5) {-$1$};
\node [right] at (11.4,1.5) {$1$};
\node [above] at (10.8,0.7) {-$1$};
\node [above] at (9.6,0.2) {$1$};

\end{tikzpicture}
\caption{Proper 3-edge coloring of $(B(6,2,3),\{v_1v_2\})$.}\label{Fig-coloring of B(6,2,3)}
\end{subfigure}
\hfill
\hspace*{-1cm}
\begin{subfigure}{0.3\textwidth}
\begin{tikzpicture}[scale=0.6]
\node[fill=black, circle, inner sep=1.5pt] (v1) at (9,-0.3) {};
\node[fill=black, circle, inner sep=1.5pt] (v2) at (9,2.2) {};
\node[fill=black, circle, inner sep=1.5pt] (v3) at (9,4.4) {};
\node[fill=black, circle, inner sep=1.5pt] (v4) at (9,6.3) {};
\node[fill=black, circle, inner sep=1.5pt] (v5) at (10.5,2) {};
\node[fill=black, circle, inner sep=1.5pt] (v6) at (10.5,4) {};
\node[fill=black, circle, inner sep=1.5pt] (v7) at (12,1) {};
\node[fill=black, circle, inner sep=1.5pt] (v8) at (12,5) {};
\node[fill=black, circle, inner sep=1.5pt] (v9) at (13.5,-0.3) {};
\node[fill=black, circle, inner sep=1.5pt] (v10) at (13.5,6.3) {};

\foreach \from/\to in {v1/v2,v2/v3,v1/v5,v1/v7,v1/v9,v4/v6,v4/v8,v4/v10,v5/v6,v7/v8,v9/v10} \draw (\from) -- (\to);
\draw [dashed] (9,6.3) -- (9,4.4);


\node [left] at (9,5.8) {-$2$};
\node [left] at (9,4.85) {-$2$};
\node [left] at (9,3.8) {$2$};
\node [left] at (9,2.6) {-$2$};
\node [left] at (9,1.7) {$2$};
\node [left] at (9,0.2) {-$2$};

\node [above] at (9.6,6.2) {$1$};
\node [above] at (13,6.2) {-$1$};
\node [right] at (13.5,5.8) {$1$};
\node [right] at (13.5,0.2) {-$1$};
\node [below] at (13,-0.3) {-$2$};
\node [below] at (9.6,-0.3) {$2$};

\node [below] at (9.9,5.9) {-$1$};
\node [below] at (11.45,5.25) {$1$};
\node [right] at (12,4.5) {-$1$};
\node [right] at (12,1.5) {$1$};
\node [above] at (11.45,0.75) {-$1$};
\node [above] at (9.8,0) {$1$};

\node [below] at (9.3,5.5) {$2$};
\node [below] at (9.8,4.7) {-$2$};
\node [right] at (10.5,3.5) {$2$};
\node [right] at (10.5,2.5) {-$2$};
\node [above] at (10,1.4) {$1$};
\node [above] at (9.4,0.65) {-$1$};

\end{tikzpicture}
\caption{Proper 4-edge coloring of $(B(6,3,4),\{v_1v_2\})$.}\label{Fig-coloring of B(6,3,4)}
\end{subfigure}
\caption{Proper edge colorings of some signed generalized book graphs.}\label{Fig-edge colorings of B(4,2,2), B(6,2,3) and B(6,3,4)}
\end{figure}

Now, we compute the chromatic index of $\chi'(B(m,n,k),\sigma_{1})$, where $\sigma_{1} ^{-1}(-1) = \{v_1u_{1}^{1}\}$.

\begin{theorem}
Let $m \geq 3, n \geq 2, k \geq 2$, then $\chi'(B(m,n,k),\sigma_{1}) = n+1$.
\end{theorem}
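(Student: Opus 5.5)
The plan is to match the trivial lower bound with an explicit coloring. Since $\Delta(B(m,n,k))=n+1$, Theorem~\ref{Vizing's thm} gives $\chi'(B(m,n,k),\sigma_1)\geq n+1$. It remains to build a proper $(n+1)$-edge coloring of $(B(m,n,k),\sigma_1)$. I will do this by modifying the colorings from Theorem~\ref{Theorem-chrom index of unsigned GBG}, using Lemma~\ref{Lemma-invariance of chro index} to move the single negative edge wherever it is convenient. Switching the vertices $u_1^1,\dots,u_j^1$ moves the negative edge $v_1u_1^1$ to $u_j^1u_{j+1}^1$ (or to $u_{m-k}^1v_k$ when $j=m-k$), and changes nothing else. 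So the negative edge may be taken to be any edge of the private path $v_1u_1^1\cdots u_{m-k}^1v_k$ of the first page. The guiding principle is Observation 1: the negative cycle $C_m^1$ must not lie entirely inside one $c$-graph.

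\textbf{Case $n=2r$.} There are $2r+1$ colors, so $0\in M_{2r+1}$. In Case 1 of the proof of Theorem~\ref{Theorem-chrom index of unsigned GBG}, the edge $u_{m-k}^1v_k$ gets color $0$ at both incidences. Color $0$ satisfies $\gamma(v,e)=-\sigma(e)\gamma(w,e)$ for either sign of $e$. So I switch $u_1^1,\dots,u_{m-k}^1$, making $u_{m-k}^1v_k$ the unique negative edge, and reuse that coloring verbatim. It remains proper, since properness only involves colors at a common vertex.

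\textbf{Case $n=2r-1$, $r\geq 2$.} There are $2r$ colors and no $0$, so I must recolor. I keep $v_1u_1^1$ as the negative edge and start from Case 2 of Theorem~\ref{Theorem-chrom index of unsigned GBG}. There the spine $v_1\cdots v_k$ uses $\pm r$, and page $i$ uses $\pm\lceil i/2\rceil$, so pages $1$ and $2$ share the class $\pm1$ and every other page is untouched.

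I recolor page $1$ as follows. The path $v_1u_1^1\cdots u_{m-k}^1$, which contains the negative edge, gets colors $\pm1$. The last edge $u_{m-k}^1v_k$ (positive) gets $\pm r$, exactly as in step 2 of Case 2 of the even-$m$ theorem above. Then $C_m^1$ uses two classes, so Observation 1 is not violated.

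Properness at each vertex is checked as follows.
\begin{itemize}
\item Internal vertices of page $1$: at $u_{m-k}^1$ the colors come from different classes, and the other internal vertices are fine by Theorem~\ref{Theorem-paths in Sigma_a}.
\item $v_1$: it carries one spine color from $\pm r$ and the page-$2$ color from $\pm1$. By Theorem~\ref{Theorem-paths in Sigma_a} the $\pm1$-path on page $1$ has two colorings, and I choose the one giving $v_1$ the element of $\{1,-1\}$ not used by page $2$.
\item $v_k$: it carries the spine color from $\pm r$ and the page-$1$ color from $\pm r$. The positive edge $u_{m-k}^1v_k$ can be oriented either way, so I give $v_k$ the element of $\{r,-r\}$ opposite to the spine's color there. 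This choice is independent of the previous one, because the two ends of the $\pm1$-path lie in different color classes from the last edge.
\item Everything else: page $1$ no longer uses class $\pm1$ at $v_k$, so there is no clash with page $2$ there, and all other vertices keep their old colors.
\end{itemize}
This works for every parity of $m$ and every $k\geq2$. For $k=2$ the spine is the single edge $v_1v_2$, and the same checks apply.

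The step I expect to need the most care is this last case. The danger is that one free choice (the coloring of the $\pm1$-path, or the orientation of the $\pm r$-edge) is forced by two constraints at once. The key point to verify is that the two choices are decoupled, as argued above. If that check fails for small parameters, say $m-k=1$, I would instead place the $\pm r$ edge at the $v_1$ end and the $\pm1$ path at the $v_k$ end, and rerun the same argument.
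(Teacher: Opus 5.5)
Your lower bound and your even case are correct, and the even case is slicker than the paper's. Switching $u_1^1,\dots,u_{m-k}^1$ moves the negative edge onto $u_{m-k}^1v_k$, which already carries $0$ at both incidences in the $\sigma_0$ coloring, so that coloring transfers verbatim. The paper instead writes a fresh coloring with $0$ on $v_1u_1^1$.

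The odd case $n=2r-1$, however, fails at $v_k$. By your own description page $i$ uses $\pm\lceil i/2\rceil$, so page $2r-1$ uses $\pm r$, the same class as the spine. Hence in the $\sigma_0$ coloring $v_k$ already carries $r$ (spine) \emph{and} $-r$ (page $2r-1$), and $v_1$ carries $-r$ and $r$. Your check at $v_k$ leaves out page $2r-1$. The value $-r$ you give to $(v_k,u_{m-k}^1v_k)$ clashes with page $2r-1$, and $r$ would clash with the spine. Your fallback fails the same way at $v_1$. The decoupling you worried about is not the problem; the problem is that no color is free.

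This cannot be patched inside page 1 in general. Since $\deg v_1=\deg v_k=2r=|M_{2r}|$, if all other pages are untouched, page 1 must keep its old end colors: $1$ at $v_1$ and $-1$ at $v_k$. For $m=k+1$ that is impossible. The negative edge $v_1u_1^1$ forces $\gamma(u_1^1,v_1u_1^1)=1$, and the positive edge $u_1^1v_k$ forces $\gamma(u_1^1,u_1^1v_k)=1$. So a second page must be recolored.

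The missing step is exactly step 4 of the even-$m$ construction whose step 2 you borrowed. Recolor page $2r-1$ so that $v_1u_1^{2r-1}\cdots u_{m-k}^{2r-1}$ uses $\pm r$ with $r$ at $v_1$, and its last edge gets $1$ at $u_{m-k}^{2r-1}$ and $-1$ at $v_k$. This frees $-r$ at $v_k$ for page 1. Swapping the $v_k$-end colors of pages $1$ and $2r-1$ is precisely the paper's Case 2. There, page 1 gets $1$ at both ends of the negative edge and $\pm r$ on the rest, ending with $-r$ at $v_k$.

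When $m-k\geq 2$ you could instead stay inside page 1. Color $v_1u_1^1$ with $1$ at both ends, and $u_{m-k}^1v_k$ with $1$ at $u_{m-k}^1$ and $-1$ at $v_k$. Then color the interior edges with some class other than $\pm1$. That does not cover $m=k+1$.
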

\begin{proof}
Recall that the set of negative edges, in a signed generalized book graph with signature $\sigma_1$, is $\{v_1u_{1}^{1}\}$. We distinguish two cases according to the parity of $n$.

\noindent
\textbf{Case 1.} Suppose $n=2r$ for some integer $r \geq 1$. Thus, $\chi' (B(m,2r,k), \{v_1u_{1}^{1}\}) \geq 2r+1$ by Theorem~\ref{Vizing's thm}. Now we give a proper $(2r+1)$-edge coloring $\gamma$ of $(B(m,2r,k), \{v_1u_{1}^{1}\})$ as follows (see Figure~\ref{Fig-coloring of B(6,2,3) with sigma_1} for the case $m=6,n=2,k=3$). 
\begin{itemize}
\item[1.] We sequentially color the incidences of the path $v_1 \ldots v_k$ using the pattern $((-r)(r)) \ldots ((-r)(r))$. 
\item[2.] For the edge $v_1u_{1}^{1}$, we set $\gamma (v_1,v_1u_{1}^{1}) = \gamma (u_{1}^{1},u_{1}^{1}v_1) =0$ and we sequentially color the incidences of the path $u_{1}^{1} \ldots u_{m-k}^{1} v_k$ using the pattern $((r)(-r)) \ldots ((r)(-r))$.  
\item[3.] For $ i = 2,\ldots , 2r-1$, we sequentially color the incidences of the path $v_1u_{1}^{i} \cdots u_{m-k}^{i}v_k$ using the pattern $((\frac{i}{2})(-\frac{i}{2})) \ldots ((\frac{i}{2})(-\frac{i}{2}))$ and $((-\frac{i-1}{2})(\frac{i-1}{2})) \ldots ((-\frac{i-1}{2})(\frac{i-1}{2}))$  according as $i$ is even and odd, respectively.
\item[4.] We sequentially color the incidences of $v_1u_{1}^{2r} \ldots u_{m-k}^{2r}$ using the pattern $((r)(-r)) \ldots ((r)(-r))$ and for the edge $u_{m-k}^{2r}v_k$, we set $\gamma (u_{m-k}^{2r},u_{m-k}^{2r}v_k) = \gamma (v_k,v_ku_{m-k}^{2r}) =0$. 
\end{itemize} 
The so-obtained coloring $\gamma$ is clearly a proper edge coloring of $(B(m,2r,k),\{v_1u_{1}^{1}\})$ for all $m \geq 3$ and $k \geq 2$.

\noindent
\textbf{Case 2.} Suppose $n=2r-1$ for some integer $r \geq 2$. Thus, $\chi' (B(m,2r-1,k), \{v_1u_{1}^{1}\}) \geq 2r$ by Theorem~\ref{Vizing's thm}. Now we give a proper $(2r)$-edge coloring $\gamma$ of $(B(m,2r-1,k), \{v_1u_{1}^{1}\})$ as follows (see Figure~\ref{Fig-coloring of B(6,3,4) with sigma_1} for the case $m=6,n=3,k=4$).
\begin{itemize}
\item[1.] We sequentially color the incidences of the path $v_1 \ldots v_k$ using the pattern $((-r)(r)) \ldots ((-r)(r))$. 
\item[2.] For the edge $v_1u_{1}^{1}$, we set $\gamma (v_1,v_1u_{1}^{1}) = \gamma (u_{1}^{1},u_{1}^{1}v_1) =1$ and we sequentially color the incidences of the path $u_{1}^{1} \ldots u_{m-k}^{1} v_k$ using the pattern $((r)(-r)) \ldots ((r)(-r))$.  
\item[3.] For $ i = 2,\ldots , 2r-2$, we sequentially color the incidences of the path $v_1u_{1}^{i} \cdots u_{m-k}^{i}v_k$ using the pattern $((-\frac{i}{2})(\frac{i}{2})) \ldots ((-\frac{i}{2})(\frac{i}{2}))$ and $((\frac{i+1}{2})(-\frac{i+1}{2})) \ldots ((\frac{i+1}{2})(-\frac{i+1}{2}))$  according as $i$ is even and odd, respectively.
\item[4.] We sequentially color the incidences of $v_1u_{1}^{2r-1} \ldots u_{m-k}^{2r-1}$ using the pattern $((r)(-r)) \ldots ((r)(-r))$ and for the edge $u_{m-k}^{2r-1}v_k$, we set $\gamma (u_{m-k}^{2r-1},u_{m-k}^{2r-1}v_k) =1$, $\gamma (v_k,v_ku_{m-k}^{2r-1}) =-1$. 
\end{itemize} 
The so-obtained coloring $\gamma$ is clearly a proper edge coloring of $(B(m,2r-1,k),\{v_1u_{1}^{1}\})$.

By \textbf{Cases 1} and \textbf{2}, the proof is complete.
\end{proof}

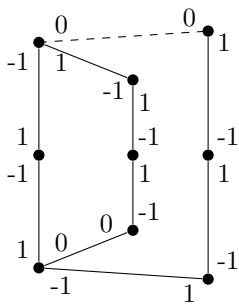
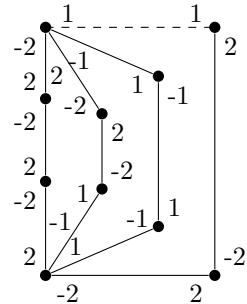
\begin{figure}[ht]
\hspace*{0.5cm}
\begin{subfigure}{0.4\textwidth}
\begin{tikzpicture}[scale=0.5]
\node[fill=black, circle, inner sep=1.5pt] (v1) at (9,0) {};
\node[fill=black, circle, inner sep=1.5pt] (v2) at (9,3) {};
\node[fill=black, circle, inner sep=1.5pt] (v3) at (9,6) {};
\node[fill=black, circle, inner sep=1.5pt] (v4) at (11.5,1) {};
\node[fill=black, circle, inner sep=1.5pt] (v5) at (11.5,3) {};
\node[fill=black, circle, inner sep=1.5pt] (v6) at (11.5,5) {};
\node[fill=black, circle, inner sep=1.5pt] (v7) at (13.5,-0.3) {};
\node[fill=black, circle, inner sep=1.5pt] (v8) at (13.5,3) {};
\node[fill=black, circle, inner sep=1.5pt] (v9) at (13.5,6.3) {};

\foreach \from/\to in {v1/v2,v1/v4,v2/v3,v1/v7,v3/v6,v4/v5,v5/v6,v7/v8,v8/v9} \draw (\from) -- (\to);
\draw [dashed] (9,6) -- (13.5,6.3);

\node [left] at (9,5.5) {-$1$};
\node [left] at (9,3.5) {$1$};
\node [left] at (9,2.5) {-$1$};
\node [left] at (9,0.5) {$1$};

\node [above] at (9.6,6) {$0$};
\node [above] at (13,6.2) {$0$};
\node [right] at (13.5,6) {$1$};
\node [right] at (13.5,3.5) {-$1$};
\node [right] at (13.5,2.5) {$1$};
\node [right] at (13.5,0.2) {-$1$};
\node [below] at (13,-0.2) {$1$};
\node [below] at (9.6,0) {-$1$};

\node [above] at (9.6,5) {$1$};
\node [below] at (11,5.2) {-$1$};
\node [right] at (11.4,4.4) {$1$};
\node [right] at (11.4,3.5) {-$1$};
\node [right] at (11.4,2.5) {$1$};
\node [right] at (11.4,1.5) {-$1$};
\node [above] at (10.8,0.7) {$0$};
\node [above] at (9.6,0.2) {$0$};

\end{tikzpicture}
\caption{Proper 3-edge coloring of $(B(6,2,3),\sigma_1)$.}\label{Fig-coloring of B(6,2,3) with sigma_1}
\end{subfigure}
\hfill
\hspace*{-1cm}
\begin{subfigure}{0.4\textwidth}
\begin{tikzpicture}[scale=0.5]
\node[fill=black, circle, inner sep=1.5pt] (v1) at (9,-0.3) {};
\node[fill=black, circle, inner sep=1.5pt] (v2) at (9,2.2) {};
\node[fill=black, circle, inner sep=1.5pt] (v3) at (9,4.4) {};
\node[fill=black, circle, inner sep=1.5pt] (v4) at (9,6.3) {};
\node[fill=black, circle, inner sep=1.5pt] (v5) at (10.5,2) {};
\node[fill=black, circle, inner sep=1.5pt] (v6) at (10.5,4) {};
\node[fill=black, circle, inner sep=1.5pt] (v7) at (12,1) {};
\node[fill=black, circle, inner sep=1.5pt] (v8) at (12,5) {};
\node[fill=black, circle, inner sep=1.5pt] (v9) at (13.5,-0.3) {};
\node[fill=black, circle, inner sep=1.5pt] (v10) at (13.5,6.3) {};

\foreach \from/\to in {v1/v2,v2/v3,v1/v5,v1/v7,v1/v9,v3/v4,v4/v6,v4/v8,v5/v6,v7/v8,v9/v10} \draw (\from) -- (\to);
\draw [dashed] (9,6.3) -- (13.5,6.3);


\node [left] at (9,5.8) {-$2$};
\node [left] at (9,4.85) {$2$};
\node [left] at (9,3.8) {-$2$};
\node [left] at (9,2.6) {$2$};
\node [left] at (9,1.7) {-$2$};
\node [left] at (9,0.2) {$2$};

\node [above] at (9.6,6.2) {$1$};
\node [above] at (13,6.2) {$1$};
\node [right] at (13.5,5.8) {$2$};
\node [right] at (13.5,0.2) {-$2$};
\node [below] at (13,-0.3) {$2$};
\node [below] at (9.6,-0.3) {-$2$};

\node [below] at (9.9,5.9) {-$1$};
\node [below] at (11.45,5.25) {$1$};
\node [right] at (12,4.5) {-$1$};
\node [right] at (12,1.5) {$1$};
\node [above] at (11.45,0.75) {-$1$};
\node [above] at (9.8,0) {$1$};

\node [below] at (9.3,5.5) {$2$};
\node [below] at (9.8,4.7) {-$2$};
\node [right] at (10.5,3.5) {$2$};
\node [right] at (10.5,2.5) {-$2$};
\node [above] at (10,1.4) {$1$};
\node [above] at (9.4,0.65) {-$1$};

\end{tikzpicture}
\caption{Proper 4-edge coloring of $(B(6,3,4),\sigma_1)$.}\label{Fig-coloring of B(6,3,4) with sigma_1}
\end{subfigure}
\caption{Proper edge colorings of some signed generalized book graphs with signature $\sigma_1$.}\label{Fig-edge colorings of B(6,2,3) and B(6,3,4) with sigma one}
\end{figure}

\begin{figure}[ht]
\hspace*{0.5cm}
\begin{subfigure}{0.4\textwidth}
\begin{tikzpicture}[scale=0.5]
\node[fill=black, circle, inner sep=1.5pt] (v1) at (9,-0.3) {};
\node[fill=black, circle, inner sep=1.5pt] (v2) at (9,3.3) {};
\node[fill=black, circle, inner sep=1.5pt] (v3) at (9,6.3) {};
\node[fill=black, circle, inner sep=1.5pt] (v4) at (10.5,2) {};
\node[fill=black, circle, inner sep=1.5pt] (v5) at (10.5,4) {};
\node[fill=black, circle, inner sep=1.5pt] (v6) at (12,1) {};
\node[fill=black, circle, inner sep=1.5pt] (v7) at (12,5) {};
\node[fill=black, circle, inner sep=1.5pt] (v8) at (13.5,-0.3) {};
\node[fill=black, circle, inner sep=1.5pt] (v9) at (13.5,6.3) {};
\node[fill=black, circle, inner sep=1.5pt] (v10) at (15,-2) {};
\node[fill=black, circle, inner sep=1.5pt] (v11) at (15,8) {};

\foreach \from/\to in {v1/v2,v2/v3,v1/v4,v1/v6,v1/v8,v1/v10,v3/v5,v3/v7,v4/v5,v6/v7,v8/v9,v10/v11} \draw (\from) -- (\to);
\draw [dashed] (9,6.3) -- (15,8);
\draw [dashed] (9,6.3) -- (13.5,6.3);


\node [left] at (9,5.8) {-$2$};
\node [left] at (9,3.8) {$2$};
\node [left] at (9,2.8) {-$2$};
\node [left] at (9,0.2) {$2$};

\node [above] at (9.6,6.5) {$0$};
\node [above] at (14.3,7.8) {$0$};
\node [right] at (15,7.4) {$1$};
\node [right] at (15,-1.3) {-$1$};
\node [below] at (14.3,-1.8) {$1$};
\node [below] at (9.6,-0.5) {-$1$};

\node [below] at (10.6,6.4) {$1$};
\node [below] at (13,6.3) {$1$};
\node [right] at (13.5,5.8) {-$1$};
\node [right] at (13.5,0.2) {$1$};
\node [above] at (13,-0.3) {-$1$};
\node [above] at (10.6,-0.4) {$1$};

\node [below] at (9.9,5.9) {-$1$};
\node [below] at (11.45,5.25) {$1$};
\node [right] at (12,4.5) {-$1$};
\node [right] at (12,1.5) {$1$};
\node [above] at (11.45,0.75) {$0$};
\node [above] at (9.8,0) {$0$};

\node [below] at (9.3,5.5) {$2$};
\node [below] at (9.8,4.7) {-$2$};
\node [right] at (10.5,3.5) {$2$};
\node [right] at (10.5,2.5) {-$2$};
\node [above] at (10,1.4) {$2$};
\node [above] at (9.4,0.65) {-$2$};

\end{tikzpicture}
\caption{Proper 5-edge coloring of $(B(5,4,3),\sigma_2)$.}\label{Fig-coloring of B(5,4,3) with sigma two}
\end{subfigure}
\hfill
\hspace*{-1cm}
\begin{subfigure}{0.4\textwidth}
\begin{tikzpicture}[scale=0.5]
\node[fill=black, circle, inner sep=1.5pt] (v1) at (9,-0.3) {};
\node[fill=black, circle, inner sep=1.5pt] (v2) at (9,3.3) {};
\node[fill=black, circle, inner sep=1.5pt] (v3) at (9,6.3) {};
\node[fill=black, circle, inner sep=1.5pt] (v4) at (10.5,2) {};
\node[fill=black, circle, inner sep=1.5pt] (v5) at (10.5,4) {};
\node[fill=black, circle, inner sep=1.5pt] (v6) at (12,1) {};
\node[fill=black, circle, inner sep=1.5pt] (v7) at (12,5) {};
\node[fill=black, circle, inner sep=1.5pt] (v8) at (13.5,-0.3) {};
\node[fill=black, circle, inner sep=1.5pt] (v9) at (13.5,6.3) {};
\node[fill=black, circle, inner sep=1.5pt] (v10) at (15,-2) {};
\node[fill=black, circle, inner sep=1.5pt] (v11) at (15,8) {};

\foreach \from/\to in {v1/v2,v2/v3,v1/v4,v1/v6,v1/v8,v1/v10,v3/v5,v4/v5,v6/v7,v8/v9,v10/v11} \draw (\from) -- (\to);
\draw [dashed] (9,6.3) -- (15,8);
\draw [dashed] (9,6.3) -- (13.5,6.3);
\draw [dashed] (9,6.3) -- (12,5);


\node [left] at (9,5.8) {-$2$};
\node [left] at (9,3.8) {$2$};
\node [left] at (9,2.8) {-$2$};
\node [left] at (9,0.2) {$2$};

\node [above] at (9.6,6.5) {$0$};
\node [above] at (14.3,7.8) {$0$};
\node [right] at (15,7.4) {$2$};
\node [right] at (15,-1.3) {-$2$};
\node [below] at (14.3,-1.8) {$2$};
\node [below] at (9.6,-0.5) {-$2$};

\node [below] at (10.6,6.4) {$1$};
\node [below] at (13,6.3) {$1$};
\node [right] at (13.5,5.8) {-$1$};
\node [right] at (13.5,0.2) {$1$};
\node [above] at (13,-0.3) {-$1$};
\node [above] at (10.6,-0.4) {$1$};

\node [below] at (9.9,5.9) {-$1$};
\node [below] at (11.45,5.25) {-$1$};
\node [right] at (12,4.5) {$1$};
\node [right] at (12,1.5) {-$1$};
\node [above] at (11.45,0.75) {$1$};
\node [above] at (9.8,0) {-$1$};

\node [below] at (9.3,5.5) {$2$};
\node [below] at (9.8,4.7) {-$2$};
\node [right] at (10.5,3.5) {$2$};
\node [right] at (10.5,2.5) {-$2$};
\node [above] at (10,1.4) {$0$};
\node [above] at (9.4,0.65) {$0$};

\end{tikzpicture}
\caption{Proper 5-edge coloring of $(B(5,4,3),\sigma_3)$.}\label{Fig-coloring of B(5,4,3) with sigma three}
\end{subfigure}
\caption{An illustration of Case 1 of Theorem~\ref{Theorem-chrom index of signed GBG with sigma l}. Proper colorings of $(B(5,4,3),\sigma_2)$ and $(B(5,4,3),\sigma_3)$.} \label{Figure - proper edge colorings for Case 1 of Thm 3.6}
\end{figure}
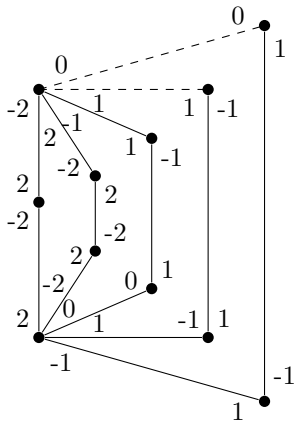
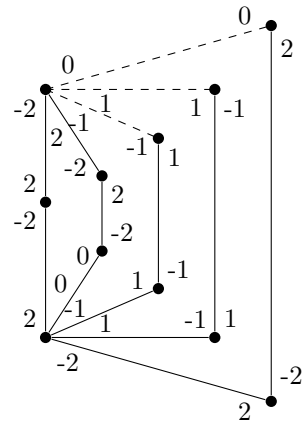

\begin{theorem}\label{Theorem-chrom index of signed GBG with sigma l}
Let $m \geq 3, n \geq 3, k \geq 2$, then $\chi'(B(m,n,k),\sigma_{l}) = n+1$, where $2 \leq l \leq n-1$.
\end{theorem}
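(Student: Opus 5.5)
The lower bound is immediate. We have $\Delta(B(m,n,k))=n+1$, attained at $v_1$ and $v_k$, so Theorem~\ref{Vizing's thm} gives $\chi'(B(m,n,k),\sigma_l)\ge n+1$. Everything therefore reduces to constructing a proper $(n+1)$-edge coloring of $(B(m,n,k),\sigma_l)$. As in the previous theorems, I would build it page by page.

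Regard the graph as $n+1$ internally disjoint $v_1$--$v_k$ paths: the spine $v_1v_2\cdots v_k$ and the $n$ pages $P_i=v_1u_1^i\cdots u_{m-k}^iv_k$. Among the pages, $P_1,\ldots,P_l$ have a negative first edge, and the remaining pages and the spine are all-positive. The inner vertices have degree $2$. So it suffices to color each path so that it is proper internally, and then make the $n+1$ colors at $v_1$ distinct and the $n+1$ colors at $v_k$ distinct.

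The basic gadget is a path colored entirely with $\pm c$, $c\neq 0$. An all-positive path colored with the pattern $((c)(-c))\ldots((c)(-c))$ receives $c$ at $v_1$ and $-c$ at $v_k$; reversing the sign gives $-c$ at $v_1$ and $c$ at $v_k$. A page with negative first edge, colored $c$ at both ends of that edge and then $((-c)(c))$ onward, receives $c$ at $v_1$ and $c$ at $v_k$; its reversal gives $-c,-c$. It follows that a single color pair $\pm c$ can serve two all-positive paths (the spine counting as one), or two negative pages. It cannot serve one positive path together with one negative page, since they always clash at $v_1$ or at $v_k$.

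I would pair the $l$ negative pages among themselves and pair the $n-l+1$ positive paths (spine plus positive pages) among themselves, giving each pair its own color $\pm c$. The parity of $l$ and of $n+1$ then decides what is left over. If $q=n+1$ is odd, the color $0$ is available. If $l$ is odd, I give the first (negative) edge of one negative page color $0$ at both incidences and color the rest of that page positively with some $\pm c$. That page then ends with $\mp c$ at $v_k$ and behaves like a positive path, so it can be paired with a positive path, and what remains pairs up exactly. If $l$ is even, the single leftover positive path is handled by giving one of its edges $0$, as in Theorem~\ref{Theorem-chrom index of unsigned GBG}. This is the pattern visible in Figure~\ref{Figure - proper edge colorings for Case 1 of Thm 3.6}.

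The main obstacle is the case $q=n+1$ even with $l$ odd, where there is no $0$ and one negative page and one positive path are left over with two color pairs to spare. Here I would copy the device from Case 2 of the $\sigma_1$ theorem: color the negative page with $\pm 1$ up to $u^1_{m-k}$ and then switch its last edge to the spare pair $\pm r$, and color the leftover positive page with $\pm r$ except for its last edge, which gets $\pm1$. The signs must then be chosen so that at $v_k$ the four incidences $\pm1,\pm r$ are all distinct; I expect a small parity check on $m-k$ to be needed here. Finally I would verify that every color appears at most once at $v_1$ and at $v_k$, which completes the construction.
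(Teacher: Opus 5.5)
Your framework matches the paper's. You take the lower bound from $\Delta=n+1$ and view the spine and the $n$ pages as internally disjoint $v_1$--$v_k$ paths. You note that a pair $\pm c$ can carry two paths of the same type but never a negative page together with a positive path. You then use $0$-blocks or a cross-pair switch to absorb what is left over. The gap is in the bookkeeping of both cases with $l$ odd: the constructions as you describe them cannot close.

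The constraint to track is that $v_1$ and $v_k$ each carry $n+1$ incidences and there are exactly $n+1$ colors. So every color, including $0$ when $n$ is even, occurs exactly once at $v_1$ and exactly once at $v_k$. Write $(x,y)$ for a path whose incidences at $v_1$ and $v_k$ receive $x$ and $y$.

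\textbf{Case $n$ even, $l$ odd.} Your modified negative page is $(0,\pm c)$, so $0$ is used at $v_1$ but not at $v_k$. Equivalently, after you pair it with one positive path, $n-l$ positive paths remain, which is an odd number. So ``what remains pairs up exactly'' is false. The block on $\{0,\pm c\}$ needs a third path: a positive page colored with $\pm c$ except that its last edge gets $0$. For example, take the spine $(-c,c)$, the modified negative page $(0,-c)$, and a positive page $(c,0)$. This is exactly the paper's Subcase~1.2 with $c=r$.

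\textbf{Case $n=2r-1$ odd, $l$ odd.} There is no $0$ and there are $r$ pairs. Pairing the $l$ negative pages and the $n-l+1$ positive paths (both odd in number) uses $r-1$ pairs, so only \emph{one} pair is spare, not two. Your switch gadget on $\pm1$ and $\pm r$ fills only two of the four slots $\pm1,\pm r$ at each end. It therefore has to be combined with two further paths taken from a broken positive pair, giving a four-path block on two pairs.

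Concretely, let $N=(\epsilon_1,\delta_1 r)$ be your negative page and $P=(\epsilon_2 r,\delta_2)$ your positive page. Add positive paths $(-\epsilon_1,\epsilon_1)$ on $\pm1$ and $(-\epsilon_2 r,\epsilon_2 r)$ on $\pm r$, and choose $\delta_1=-\epsilon_2$, $\delta_2=-\epsilon_1$. This is essentially the device of the $\sigma_1$ theorem (spine and pages $1,2,2r-1$). It needs three positive paths, which are available since $l\le n-2$. No parity condition on $m-k$ arises, because the sign at the far end of the switched edge is free.

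The paper instead builds this block from three negative pages and one positive page: pages $1,2,l,l+1$ on $\pm1$ and $\pm\frac{l+1}{2}$, which uses $l\ge 3$.

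\textbf{Case $n$ even, $l$ even.} A single leftover positive path cannot be disposed of just by giving one edge the color $0$, unless it is the one-edge spine ($k=2$). It must be merged with a positive pair into a three-path block on $\{0,\pm c\}$, as in the all-positive theorem. The paper merges it with a negative pair instead, using pages $1,l,l+1$.

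With these corrections your plan yields a complete proof.
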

\begin{proof}
It is clear that the set of negative edges in a signed generalized book graph with signature $\sigma_l$ is $\{v_1u_{1}^{1},\ldots , v_1u_{1}^{l}\}$, where $2 \leq l \leq n-1$. 

\noindent
\textbf{Case 1.} Suppose $n=2r$ for some integer $r \geq 1$.

\textbf{Subcase 1.1.} Let $l$ be even. Thus $l$ must lies between 2 and $2r-2$. In this case, we give a proper $(2r+1)$-edge coloring $\gamma$ of $(B(m,2r,k), \sigma_l)$ as follows (see Figure~\ref{Fig-coloring of B(5,4,3) with sigma two} for the case $m=5,n=4,k=3$).
\begin{itemize}
\item[1.] We sequentially color the incidences of the path $v_1 \ldots v_k$ using the pattern $((-r)(r)) \ldots ((-r)(r))$. 
\item[2.] For the edge $v_1u_{1}^{i}$, we set $$\gamma (v_1,v_1u_{1}^{i}) = \gamma (u_{1}^{i},u_{1}^{i}v_1) = \begin{cases} 0 ~~~~~~~~\text{if}~i=1,\\
\frac{i}{2}~~~~~~~~\text{if}~ i~\text{is even and}~2 \leq i \leq l,\\
- \frac{i-1}{2}~~~\text{if}~ i~\text{is odd and}~3 \leq i \leq l-1 ~(\text{this case occurs only if}~ l \geq 4).
\end{cases}$$
\item[3.] We sequentially color the incidences of $u_{1}^{1} \ldots u_{m-k}^{1} v_k$ using the pattern $((\frac{l}{2})(- \frac{l}{2})) \ldots ((\frac{l}{2})(- \frac{l}{2}))$.
\item[4.] For $ i = 2,\ldots , l$, we sequentially color the incidences of the path $u_{1}^{i} \cdots u_{m-k}^{i}v_k$ using the pattern $((-\frac{i}{2})( \frac{i}{2})) \ldots ((-\frac{i}{2})(\frac{i}{2}))$ and $((\frac{i-1}{2})(-\frac{i-1}{2})) \ldots ((\frac{i-1}{2})(-\frac{i-1}{2}))$  according as $i$ is even and odd, respectively.
\item[5.] We sequentially color the incidences of $v_1u_{1}^{l+1} \ldots u_{m-k}^{l+1}$ using the pattern $((-\frac{l}{2})(\frac{l}{2})) \ldots ((-\frac{l}{2})(\frac{l}{2}))$ and for the edge $u_{m-k}^{l+1}v_k$, we set $\gamma (u_{m-k}^{l+1},u_{m-k}^{l+1}v_k) = \gamma (v_k,v_ku_{m-k}^{l+1}) =0$.
\item[6.] For $ i = l+2,\ldots , 2r$, we sequentially color the incidences of the path $v_1u_{1}^{i} \cdots u_{m-k}^{i}v_k$ using the pattern $((\frac{i}{2})(- \frac{i}{2})) \ldots ((\frac{i}{2})(-\frac{i}{2}))$ and $((- \frac{i-1}{2})(\frac{i-1}{2})) \ldots ((-\frac{i-1}{2})(\frac{i-1}{2}))$  according as $i$ is even and odd, respectively.
\end{itemize} 
The so-obtained coloring $\gamma$ is clearly a proper edge coloring of $(B(m,2r,k),\sigma_l)$, where $l$ is even.

\textbf{Subcase 1.2.} Let $l$ be odd. Thus $l$ must lies between 3 and $2r-1$. In this case, we give a proper $(2r+1)$-edge coloring $\gamma$ of $(B(m,2r,k), \sigma_l)$ as follows (see Figure~\ref{Fig-coloring of B(5,4,3) with sigma three} for the case $m=5,n=4,k=3$).
\begin{itemize}
\item[1.] We sequentially color the incidences of the path $v_1 \ldots v_k$ using the pattern $((-r)(r)) \ldots ((-r)(r))$. 
\item[2.] For the edge $v_1u_{1}^{i}$, we set $$\gamma (v_1,v_1u_{1}^{i}) = \gamma (u_{1}^{i},u_{1}^{i}v_1) = \begin{cases} 0 ~~~~~~~~\text{if}~i=1,\\
\frac{i}{2}~~~~~~~~\text{if}~ i~\text{is even and}~2 \leq i \leq l-1,\\
- \frac{i-1}{2}~~~\text{if}~ i~\text{is odd and}~3 \leq i \leq l.
\end{cases}$$
\item[3.] We sequentially color the incidences of $u_{1}^{1} \ldots u_{m-k}^{1} v_k$ using the pattern $((r)(-r)) \ldots ((r)(-r))$.
\item[4.] For $ i = 2,\ldots , l$, we sequentially color the incidences of the path $u_{1}^{i} \cdots u_{m-k}^{i}v_k$ using the pattern $((-\frac{i}{2})( \frac{i}{2})) \ldots ((-\frac{i}{2})(\frac{i}{2}))$ and $((\frac{i-1}{2})(-\frac{i-1}{2})) \ldots ((\frac{i-1}{2})(-\frac{i-1}{2}))$  according as $i$ is even and odd, respectively.
\item[5.] If there is an integer $i$ such that $l+1 \leq i \leq  2r-1$, then we sequentially color the incidences of the path $v_1u_{1}^{i} \cdots u_{m-k}^{i}v_k$ using the pattern $((\frac{i}{2})(- \frac{i}{2})) \ldots ((\frac{i}{2})(-\frac{i}{2}))$ and $((- \frac{i-1}{2})(\frac{i-1}{2})) \ldots ((-\frac{i-1}{2})(\frac{i-1}{2}))$  according as $i$ is even and odd, respectively.
\item[6.] We sequentially color the incidences of $v_1u_{1}^{2r} \ldots u_{m-k}^{2r}$ using the pattern $((r)(-r)) \ldots ((r)(-r))$ and for the edge $u_{m-k}^{2r}v_k$, we set $\gamma (u_{m-k}^{2r},u_{m-k}^{2r}v_k) = \gamma (v_k,v_ku_{m-k}^{2r}) =0$.
\end{itemize}
The so-obtained coloring $\gamma$ is clearly a proper edge coloring of $(B(m,2r,k),\sigma_l)$, where $l$ is odd.

\noindent
\textbf{Case 2.} Suppose $n=2r-1$ for some integer $r \geq 2$.

\textbf{Subcase 2.1.} Let $l$ be even. Thus $l$ must lies between 2 and $2r-2$. In this case, we give a proper $(2r)$-edge coloring $\gamma$ of $(B(m,2r-1,k), \sigma_l)$ as follows (see Figure~\ref{Fig-coloring of B(5,3,3) with sigma two} for the case $m=5,n=3,k=3$).
\begin{itemize}
\item[1.] We sequentially color the incidences of the path $v_1 \ldots v_k$ using the pattern $((-r)(r)) \ldots ((-r)(r))$. 
\item[2.] For the edge $v_1u_{1}^{i}$, we set $$\gamma (v_1,v_1u_{1}^{i}) = \gamma (u_{1}^{i},u_{1}^{i}v_1) = \begin{cases} 
\frac{i+1}{2}~~~~~\text{if}~ i~\text{is odd and}~1 \leq i \leq l-1,\\
 -\frac{i}{2}~~~~~\text{if}~ i~\text{is even and}~2 \leq i \leq l.
\end{cases}$$
\item[3.] For $ i = 1,\ldots , l$, we sequentially color the incidences of the path $u_{1}^{i} \cdots u_{m-k}^{i}v_k$ using the pattern $((-\frac{i+1}{2})(\frac{i+1}{2})) \ldots ((-\frac{i+1}{2})(\frac{i+1}{2}))$ and $((\frac{i}{2})(- \frac{i}{2})) \ldots ((\frac{i}{2})(-\frac{i}{2}))$ according as $i$ is odd and even, respectively.
\item[4.] For $ i = l+1,\ldots , 2r-1$, we sequentially color the incidences of the path $v_1u_{1}^{i} \cdots u_{m-k}^{i}v_k$ using the pattern $((\frac{i+1}{2})(-\frac{i+1}{2})) \ldots ((\frac{i+1}{2})(-\frac{i+1}{2}))$ and $((-\frac{i}{2})( \frac{i}{2})) \ldots ((-\frac{i}{2})(\frac{i}{2}))$ according as $i$ is odd and even, respectively.
\end{itemize} 

The so-obtained coloring $\gamma$ is clearly a proper edge coloring of $(B(m,2r-1,k),\sigma_l)$, where $l$ is even.

\textbf{Subcase 2.2.} Let $l$ be odd. Thus $l$ must lies between 3 and $2r-3$. In this case, we give a proper $(2r)$-edge coloring $\gamma$ of $(B(m,2r-1,k), \sigma_l)$ as follows (see Figure~\ref{Fig-coloring of B(5,5,3) with sigma three} for the case $m=5,n=5,k=3$).
\begin{itemize}
\item[1.] We sequentially color the incidences of the path $v_1 \ldots v_k$ using the pattern $((-r)(r)) \ldots ((-r)(r))$. 
\item[2.] For the edge $v_1u_{1}^{i}$, we set $$\gamma (v_1,v_1u_{1}^{i}) = \gamma (u_{1}^{i},u_{1}^{i}v_1) = \begin{cases} 
\frac{i+1}{2}~~~~~\text{if}~ i~\text{is odd and}~1 \leq i \leq l,\\
 -\frac{i}{2}~~~~~\text{if}~ i~\text{is even and}~2 \leq i \leq l-1.
\end{cases}$$
\item[3.] We color the incidences of $u_{1}^{1} \ldots u_{m-k}^{1} v_k$ using the pattern $((\frac{l+1}{2})(-\frac{l+1}{2})) \ldots ((\frac{l+1}{2})(-\frac{l+1}{2}))$.
\item[4.] For $ i = 2,\ldots , l-1$, we sequentially color the incidences of the path $u_{1}^{i} \cdots u_{m-k}^{i}v_k$ using the pattern $((\frac{i}{2})(- \frac{i}{2})) \ldots ((\frac{i}{2})(-\frac{i}{2}))$ and $((-\frac{i+1}{2})(\frac{i+1}{2})) \ldots ((-\frac{i+1}{2})(\frac{i+1}{2}))$ according as $i$ is even and odd, respectively.
\item[5.] We sequentially color the incidences of $u_{1}^{l} \ldots u_{m-k}^{l} v_k$ using the pattern $((-1)(1)) \ldots ((-1)(1))$.
\item[6.] For $ i = l+1,\ldots , 2r-1$, we sequentially color the incidences of the path $v_1u_{1}^{i} \cdots u_{m-k}^{i}v_k$ using the pattern $((-\frac{i}{2})( \frac{i}{2})) \ldots ((-\frac{i}{2})(\frac{i}{2}))$ and $((\frac{i+1}{2})(-\frac{i+1}{2})) \ldots ((\frac{i+1}{2})(-\frac{i+1}{2}))$ according as $i$ is even and odd, respectively.
\end{itemize}

The so-obtained coloring $\gamma$ is clearly a proper edge coloring of $(B(m,2r-1,k),\sigma_l)$, where $l$ is odd.

Hence the proof follows from Cases 1 and 2.
\end{proof}

\begin{figure}[ht]
\begin{subfigure}{0.4\textwidth}
\begin{tikzpicture}[scale=0.6]
\node[fill=black, circle, inner sep=1.5pt] (v1) at (9,-0.3) {};
\node[fill=black, circle, inner sep=1.5pt] (v2) at (9,3.3) {};
\node[fill=black, circle, inner sep=1.5pt] (v3) at (9,6.3) {};
\node[fill=black, circle, inner sep=1.5pt] (v4) at (10.5,2) {};
\node[fill=black, circle, inner sep=1.5pt] (v5) at (10.5,4) {};
\node[fill=black, circle, inner sep=1.5pt] (v6) at (12,1) {};
\node[fill=black, circle, inner sep=1.5pt] (v7) at (12,5) {};
\node[fill=black, circle, inner sep=1.5pt] (v8) at (13.5,-0.3) {};
\node[fill=black, circle, inner sep=1.5pt] (v9) at (13.5,6.3) {};

\foreach \from/\to in {v1/v2,v2/v3,v1/v4,v1/v6,v1/v8,v3/v5,v4/v5,v6/v7,v8/v9} \draw (\from) -- (\to);
\draw [dashed] (9,6.3) -- (12,5);
\draw [dashed] (9,6.3) -- (13.5,6.3);


\node [left] at (9,5.8) {-$2$};
\node [left] at (9,3.8) {$2$};
\node [left] at (9,2.8) {-$2$};
\node [left] at (9,0.2) {$2$};

\node [above] at (10,6.3) {$1$};
\node [above] at (13,6.3) {$1$};
\node [right] at (13.5,5.8) {-$1$};
\node [right] at (13.5,0.2) {$1$};
\node [below] at (13,-0.3) {-$1$};
\node [below] at (10,-0.3) {$1$};

\node [below] at (9.9,5.9) {-$1$};
\node [below] at (11.45,5.25) {-$1$};
\node [right] at (12,4.5) {$1$};
\node [right] at (12,1.5) {-$1$};
\node [above] at (11.45,0.75) {$1$};
\node [above] at (9.8,0) {-$1$};

\node [below] at (9.3,5.5) {$2$};
\node [below] at (9.8,4.7) {-$2$};
\node [right] at (10.5,3.5) {$2$};
\node [right] at (10.5,2.5) {-$2$};
\node [above] at (10,1.4) {$2$};
\node [above] at (9.4,0.65) {-$2$};

\end{tikzpicture}
\caption{Proper 4-edge coloring of $(B(5,3,3),\sigma_2)$.}\label{Fig-coloring of B(5,3,3) with sigma two}
\end{subfigure}
\hfill
\hspace*{-1cm}
\begin{subfigure}{0.4\textwidth}
\begin{tikzpicture}[scale=0.6]
\node[fill=black, circle, inner sep=1.5pt] (v1) at (9,-0.3) {};
\node[fill=black, circle, inner sep=1.5pt] (v2) at (9,3.3) {};
\node[fill=black, circle, inner sep=1.5pt] (v3) at (9,6.3) {};
\node[fill=black, circle, inner sep=1.5pt] (v4) at (10.5,2) {};
\node[fill=black, circle, inner sep=1.5pt] (v5) at (10.5,4) {};
\node[fill=black, circle, inner sep=1.5pt] (v6) at (11.5,1) {};
\node[fill=black, circle, inner sep=1.5pt] (v7) at (11.5,5) {};
\node[fill=black, circle, inner sep=1.5pt] (v8) at (12.5,0) {};
\node[fill=black, circle, inner sep=1.5pt] (v9) at (12.5,6) {};
\node[fill=black, circle, inner sep=1.5pt] (v10) at (13.5,-1.3) {};
\node[fill=black, circle, inner sep=1.5pt] (v11) at (13.5,7.3) {};
\node[fill=black, circle, inner sep=1.5pt] (v12) at (14.5,-2.5) {};
\node[fill=black, circle, inner sep=1.5pt] (v13) at (14.5,8.5) {};

\foreach \from/\to in {v1/v2,v2/v3,v1/v4,v1/v6,v1/v8,v1/v10,v1/v12,v3/v5,v3/v7,v4/v5,v6/v7,v8/v9,v10/v11,v12/v13} \draw (\from) -- (\to);
\draw [dashed] (9,6.3) -- (13.5,7.3);
\draw [dashed] (9,6.3) -- (12.5,6);
\draw [dashed] (9,6.3) -- (14.5,8.5);


\node [left] at (9,5.8) {-$3$};
\node [left] at (9,3.8) {$3$};
\node [left] at (9,2.8) {-$3$};
\node [left] at (9,0.2) {$3$};

\node [above] at (9.8,6.6) {$1$};
\node [above] at (13.8,8.2) {$1$};
\node [right] at (14.5,7.8) {$2$};
\node [right] at (14.5,-1.8) {-$2$};
\node [below] at (14,-2.2) {$2$};
\node [below] at (9.8,-0.8) {-$2$};

\node [below] at (10.9,6.8) {-$1$};
\node [below] at (12.8,7.2) {-$1$};
\node [right] at (13.5,6.8) {$1$};
\node [right] at (13.5,-0.8) {-$1$};
\node [above] at (12.8,-1.1) {$1$};
\node [above] at (10.9,-0.8) {-$1$};

\node [below] at (10.6,6.2) {$2$};
\node [below] at (12,6) {$2$};
\node [right] at (12.5,5.3) {-$1$};
\node [right] at (12.5,0.6) {$1$};
\node [above] at (12,0) {-$1$};
\node [above] at (10.6,-0.2) {$1$};

\node [below] at (9.9,5.9) {-$2$};
\node [below] at (11,5.25) {$2$};
\node [right] at (11.5,4.5) {-$2$};
\node [right] at (11.5,1.5) {$2$};
\node [above] at (11,0.75) {-$2$};
\node [above] at (9.8,0) {$2$};

\node [below] at (9.3,5.5) {$3$};
\node [below] at (9.8,4.7) {-$3$};
\node [right] at (10.5,3.5) {$3$};
\node [right] at (10.5,2.5) {-$3$};
\node [above] at (10,1.4) {$3$};
\node [above] at (9.4,0.65) {-$3$};
\end{tikzpicture}
\caption{Proper 5-edge coloring of $(B(5,5,3),\sigma_3)$.}\label{Fig-coloring of B(5,5,3) with sigma three}
\end{subfigure}
\caption{An illustration of Case 2 of Theorem~\ref{Theorem-chrom index of signed GBG with sigma l}. Proper colorings of $(B(5,3,3),\sigma_2)$ and $(B(5,5,3),\sigma_3)$.} \label{Figure - proper edge colorings for Case 2 of Thm 3.6}
\end{figure}
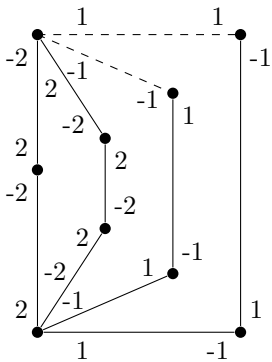
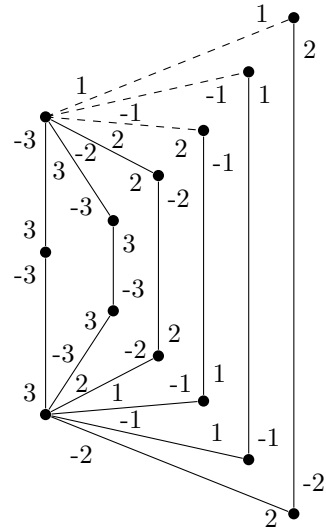

From Theorems~\ref{Theorem-chrom index of unsigned GBG} - \ref{Theorem-chrom index of signed GBG with sigma l}, it follows that each signed generalized book graph is class 1.

\subsection{Signed complete graphs}

In this subsection, we study edge coloring of signed complete graphs of order up to six.

It is well-known that there are two, three,  seven and sixteen signed complete graphs on three, four, five and six vertices, respectively. Their representatives, taken from \cite{Sehrawat2025}, are shown in Figures~\ref{Figure-proper edge colorings of signed K_n up to 5}, \ref{Figure-1-SCGs on 6 vertices} and \ref{Figure-2-SCGs on 6 vertices}. We determine the chromatic index of each of these representatives.

\begin{figure}[ht]
\begin{subfigure}{0.2\textwidth}
\centering
\begin{tikzpicture}[scale=0.35]
\draw[fill=black] (13,5) circle (6pt);
\node [right] at (13.5,4.3) {$1$};
\node [right] at (15.2,1) {$-1$};
\draw[fill=black] (16,0) circle (6pt);
\node [below] at (15,0) {$1$};
\node [below] at (11,0) {$-1$};
\draw[fill=black] (10,0) circle (6pt);
\node [left] at (10.6,1) {$1$};
\node [left] at (12.4,4.3) {$-1$};
\node [below] at (13,-1.3) {$(K_{3},\sigma_{1})$};

\draw (10,0) -- (13,5);
\draw (10,0) -- (16,0);
\draw (13,5) -- (16,0);
\end{tikzpicture}
\end{subfigure}
\hfill
\begin{subfigure}{0.2\textwidth}
\centering
\begin{tikzpicture}[scale=0.35]
\draw[fill=black] (13,5) circle (6pt);
\node [right] at (13.5,4.3) {$0$};
\node [right] at (15.4,1) {$0$};
\draw[fill=black] (16,0) circle (6pt);
\node [below] at (15,0) {$1$};
\node [below] at (11,0) {$-1$};
\draw[fill=black] (10,0) circle (6pt);
\node [left] at (10.6,1) {$1$};
\node [left] at (12.4,4.3) {$-1$};
\node [below] at (13,-1.3) {{$(K_{3},\sigma_{2})$}};

\draw (10,0) -- (13,5);
\draw (10,0) -- (16,0);
\draw [dashed] (13,5) -- (16,0);
\end{tikzpicture}
\end{subfigure}
\hfill
\begin{subfigure}{0.2\textwidth}
\centering
\begin{tikzpicture}[scale=0.35]
\draw[fill=black] (10,5) circle (6pt);
\draw[fill=black] (16,5) circle (6pt);
\draw[fill=black] (16,0) circle (6pt);
\draw[fill=black] (10,0) circle (6pt);
\node [below] at (13,-1.3) {$(K_{4},\sigma_1)$};

\node [above] at (11,5) {$1$};
\node [above] at (15,5) {$-1$};
\node [right] at (16,4) {$1$};
\node [right] at (16,1) {$-1$};
\node [below] at (15,0) {$1$};
\node [below] at (11,0) {$-1$};
\node [left] at (10,1) {$1$};
\node [left] at (10,4) {$-1$};
\node [below] at (15.2,4.2) {$0$};
\node [above] at (10.8,0.7) {$0$};
\node [below] at (10.8,4.3) {$0$};
\node [above] at (15.2,0.6) {$0$};

\draw (10,5) -- (16,5);
\draw (10,0) -- (16,5);
\draw (10,5) -- (16,0);
\draw (10,0) -- (16,0);
\draw (10,5) -- (10,0);
\draw (16,5) -- (16,0);
\end{tikzpicture}
\end{subfigure}
\hfill
\begin{subfigure}{0.2\textwidth}
\centering
\begin{tikzpicture}[scale=0.35]
\draw[fill=black] (10,5) circle (6pt);
\draw[fill=black] (16,5) circle (6pt);
\draw[fill=black] (16,0) circle (6pt);
\draw[fill=black] (10,0) circle (6pt);
\node [below] at (13,-1.3) {$(K_{4},\sigma_2)$};

\node [above] at (11,5) {$1$};
\node [above] at (15,5) {$-1$};
\node [right] at (16,4) {$0$};
\node [right] at (16,1) {$0$};
\node [below] at (15,0) {$-1$};
\node [below] at (11,0) {$1$};
\node [left] at (10,1) {$0$};
\node [left] at (10,4) {$0$};
\node [below] at (15.2,4.2) {$1$};
\node [above] at (10.8,0.7) {$-1$};
\node [below] at (10.8,4.3) {$-1$};
\node [above] at (15.2,0.6) {$1$};

\draw (10,5) -- (16,5);
\draw (10,0) -- (16,5);
\draw (10,5) -- (16,0);
\draw (10,0) -- (16,0);
\draw (10,5) -- (10,0);
\draw [dashed] (16,5) -- (16,0);

\end{tikzpicture}
\end{subfigure}
\hfill
\vspace{0.3in}
\begin{subfigure}{0.2\textwidth}
\centering
\begin{tikzpicture}[scale=0.35]
\draw[fill=black] (10,5) circle (6pt);
\draw[fill=black] (16,5) circle (6pt);
\draw[fill=black] (16,0) circle (6pt);
\draw[fill=black] (10,0) circle (6pt);
\node [below] at (13,-1.3) {$(K_{4},\sigma_3)$};

\node [above] at (11,5) {$1$};
\node [above] at (15,5) {$-1$};
\node [right] at (16,4) {$0$};
\node [right] at (16,1) {$0$};
\node [below] at (15,0) {$-1$};
\node [below] at (11,0) {$1$};
\node [left] at (10,1) {$0$};
\node [left] at (10,4) {$0$};
\node [below] at (15.2,4.2) {$1$};
\node [above] at (10.8,0.7) {$-1$};
\node [below] at (10.8,4.3) {$-1$};
\node [above] at (15.2,0.6) {$1$};

\draw (10,5) -- (16,5);
\draw (10,0) -- (16,5);
\draw (10,5) -- (16,0);
\draw (10,0) -- (16,0);
\draw [dashed] (10,5) -- (10,0);
\draw [dashed] (16,5) -- (16,0);
\end{tikzpicture}
\end{subfigure}
\hfill
\begin{subfigure}{0.2\textwidth}
\centering
\begin{tikzpicture}[scale=0.45]
\draw[fill=black] (10,0) circle (6pt);
\draw[fill=black] (14,0) circle (6pt);
\draw[fill=black] (9,3) circle (6pt);
\draw[fill=black] (15,3) circle (6pt);
\draw[fill=black] (12,5.5) circle (6pt);
\node [below] at (12,-1.3) {$(K_{5},\sigma_1)$};

\node [right] at (12.5,5.2) {$1$};
\node [right] at (14,4) {$-1$};
\node [right] at (14.6,2) {$1$};
\node [right] at (14.1,0.6) {$-1$};
\node [below] at (13,0) {$1$};
\node [below] at (11,0) {$-1$};
\node [left] at (9.8,0.8) {$1$};
\node [left] at (9.4,2.1) {$-1$};
\node [left] at (9.7,3.8) {$1$};
\node [left] at (11.2,5) {$-1$};

\node [right] at (12.3,4.5) {$2$};
\node [above] at (13.5,2.9) {-$2$};
\node [below] at (14.3,2.8) {$2$};
\node [right] at (13.2,1.4) {-$2$};
\node [above] at (12.5,-0.2) {$2$};
\node [above] at (11,0.5) {-$2$};
\node [right] at (9.6,1.2) {$2$};
\node [right] at (9.7,2.6) {-$2$};
\node [above] at (10.3,2.9) {$2$};
\node [right] at (10.5,4.4) {-$2$};

\draw (10,0) -- (9,3);
\draw (10,0) -- (12,5.5);
\draw (10,0) -- (14,0);
\draw (10,0) -- (15,3);
\draw (9,3) -- (12,5.5);
\draw (9,3) -- (15,3);
\draw (9,3) -- (14,0);
\draw (12,5.5) -- (15,3);
\draw (12,5.5) -- (14,0);
\draw (14,0) -- (15,3);
\end{tikzpicture}
\end{subfigure}
\hfill
\begin{subfigure}{0.2\textwidth}
\centering
\begin{tikzpicture}[scale=0.45]
\draw[fill=black] (10,0) circle (6pt);
\draw[fill=black] (14,0) circle (6pt);
\draw[fill=black] (9,3) circle (6pt);
\draw[fill=black] (15,3) circle (6pt);
\draw[fill=black] (12,5.5) circle (6pt);
\node [below] at (12,-1.3) {$(K_{5},\sigma_2)$};

\node [right] at (12.5,5.2) {$0$};
\node [right] at (14,4) {$0$};
\node [right] at (14.6,2) {$1$};
\node [right] at (14.1,0.6) {$-1$};
\node [below] at (13,0) {$1$};
\node [below] at (11,0) {$-1$};
\node [left] at (9.8,0.8) {$1$};
\node [left] at (9.4,2.1) {$-1$};
\node [left] at (9.7,3.8) {$1$};
\node [left] at (11.2,5) {$-1$};

\node [right] at (12.3,4.5) {$2$};
\node [above] at (13.5,2.9) {-$2$};
\node [below] at (14.3,2.8) {$2$};
\node [right] at (13.2,1.4) {-$2$};
\node [above] at (12.5,-0.2) {$2$};
\node [above] at (11,0.5) {-$2$};
\node [right] at (9.6,1.2) {$2$};
\node [right] at (9.7,2.6) {-$2$};
\node [above] at (10.3,2.9) {$2$};
\node [right] at (10.5,4.4) {-$2$};

\draw (10,0) -- (9,3);
\draw (10,0) -- (12,5.5);
\draw (10,0) -- (14,0);
\draw (10,0) -- (15,3);
\draw (9,3) -- (12,5.5);
\draw (9,3) -- (15,3);
\draw (9,3) -- (14,0);
\draw [dashed] (12,5.5) -- (15,3);
\draw (12,5.5) -- (14,0);
\draw (14,0) -- (15,3);
\end{tikzpicture}
\end{subfigure}
\hfill
\begin{subfigure}{0.2\textwidth}
\centering
\begin{tikzpicture}[scale=0.45]
\draw[fill=black] (10,0) circle (6pt);
\draw[fill=black] (14,0) circle (6pt);
\draw[fill=black] (9,3) circle (6pt);
\draw[fill=black] (15,3) circle (6pt);
\draw[fill=black] (12,5.5) circle (6pt);
\node [below] at (12,-1.3) {$(K_{5},\sigma_3)$};

\node [right] at (12.5,5.2) {$1$};
\node [right] at (14,4) {$1$};
\node [right] at (14.6,2) {$-1$};
\node [right] at (14.1,0.6) {$1$};
\node [below] at (13,0) {$-1$};
\node [below] at (11,0) {$-1$};
\node [left] at (9.8,0.8) {$1$};
\node [left] at (9.4,2.1) {$-1$};
\node [left] at (9.7,3.8) {$1$};
\node [left] at (11.2,5) {$-1$};

\node [right] at (12.3,4.5) {$2$};
\node [above] at (13.5,2.9) {-$2$};
\node [below] at (14.3,2.8) {$2$};
\node [right] at (13.2,1.4) {-$2$};
\node [above] at (12.5,-0.2) {$2$};
\node [above] at (11,0.5) {-$2$};
\node [right] at (9.6,1.2) {$2$};
\node [right] at (9.7,2.6) {-$2$};
\node [above] at (10.3,2.9) {$2$};
\node [right] at (10.5,4.4) {-$2$};

\draw (10,0) -- (9,3);
\draw (10,0) -- (12,5.5);
\draw [dashed] (10,0) -- (14,0);
\draw (10,0) -- (15,3);
\draw (9,3) -- (12,5.5);
\draw (9,3) -- (15,3);
\draw (9,3) -- (14,0);
\draw [dashed] (12,5.5) -- (15,3);
\draw (12,5.5) -- (14,0);
\draw (14,0) -- (15,3);
\end{tikzpicture}
\end{subfigure}
\hfill
\vspace{0.3in}
\begin{subfigure}{0.2\textwidth}
\centering
\begin{tikzpicture}[scale=0.45]
\draw[fill=black] (10,0) circle (6pt);
\draw[fill=black] (14,0) circle (6pt);
\draw[fill=black] (9,3) circle (6pt);
\draw[fill=black] (15,3) circle (6pt);
\draw[fill=black] (12,5.5) circle (6pt);
\node [below] at (12,-1.3) {$(K_{5},\sigma_4)$};

\node [right] at (12.5,5.2) {$1$};
\node [right] at (14,4) {$1$};
\node [right] at (14.6,2) {$-1$};
\node [right] at (14.1,0.6) {$-1$};
\node [below] at (13,0) {$1$};
\node [below] at (11,0) {$-1$};
\node [left] at (9.8,0.8) {$1$};
\node [left] at (9.4,2.1) {$-1$};
\node [left] at (9.7,3.8) {$1$};
\node [left] at (11.2,5) {$-1$};

\node [right] at (12.3,4.5) {$2$};
\node [above] at (13.5,2.9) {-$2$};
\node [below] at (14.3,2.8) {$2$};
\node [right] at (13.2,1.4) {-$2$};
\node [above] at (12.5,-0.2) {$2$};
\node [above] at (11,0.5) {-$2$};
\node [right] at (9.6,1.2) {$2$};
\node [right] at (9.7,2.6) {-$2$};
\node [above] at (10.3,2.9) {$2$};
\node [right] at (10.5,4.4) {-$2$};

\draw (10,0) -- (9,3);
\draw (10,0) -- (12,5.5);
\draw (10,0) -- (14,0);
\draw (10,0) -- (15,3);
\draw (9,3) -- (12,5.5);
\draw (9,3) -- (15,3);
\draw (9,3) -- (14,0);
\draw [dashed] (12,5.5) -- (15,3);
\draw (12,5.5) -- (14,0);
\draw [dashed] (14,0) -- (15,3);
\end{tikzpicture}
\end{subfigure}
\hfill
\begin{subfigure}{0.2\textwidth}
\centering
\begin{tikzpicture}[scale=0.45]
\draw[fill=black] (10,0) circle (6pt);
\draw[fill=black] (14,0) circle (6pt);
\draw[fill=black] (9,3) circle (6pt);
\draw[fill=black] (15,3) circle (6pt);
\draw[fill=black] (12,5.5) circle (6pt);
\node [below] at (12,-1.3) {$(K_{5},\sigma_5)$};

\node [right] at (12.5,5.2) {$1$};
\node [right] at (14,4) {$1$};
\node [right] at (14.6,2) {$-1$};
\node [right] at (14.1,0.6) {$-1$};
\node [below] at (13,0) {$1$};
\node [below] at (11,0) {$-1$};
\node [left] at (9.8,0.8) {$0$};
\node [left] at (9.4,2.1) {$0$};
\node [left] at (9.7,3.8) {$1$};
\node [left] at (11.2,5) {$-1$};

\node [right] at (12.3,4.5) {$2$};
\node [above] at (13.5,2.9) {-$2$};
\node [below] at (14.3,2.8) {$2$};
\node [right] at (13.2,1.4) {-$2$};
\node [above] at (12.5,-0.2) {$2$};
\node [above] at (11,0.5) {-$2$};
\node [right] at (9.6,1.2) {$2$};
\node [right] at (9.7,2.6) {-$2$};
\node [above] at (10.3,2.9) {$2$};
\node [right] at (10.5,4.4) {-$2$};

\draw [dashed] (10,0) -- (9,3);
\draw (10,0) -- (12,5.5);
\draw (10,0) -- (14,0);
\draw (10,0) -- (15,3);
\draw (9,3) -- (12,5.5);
\draw (9,3) -- (15,3);
\draw (9,3) -- (14,0);
\draw [dashed] (12,5.5) -- (15,3);
\draw (12,5.5) -- (14,0);
\draw [dashed] (14,0) -- (15,3);
\end{tikzpicture}
\end{subfigure}
\hfill
\begin{subfigure}{0.2\textwidth}
\centering
\begin{tikzpicture}[scale=0.45]
\draw[fill=black] (10,0) circle (6pt);
\draw[fill=black] (14,0) circle (6pt);
\draw[fill=black] (9,3) circle (6pt);
\draw[fill=black] (15,3) circle (6pt);
\draw[fill=black] (12,5.5) circle (6pt);
\node [below] at (12,-1.3) {$(K_{5},\sigma_6)$};

\node [right] at (12.5,5.2) {$1$};
\node [right] at (14,4) {$1$};
\node [right] at (14.6,2) {$-1$};
\node [right] at (14.1,0.6) {$-1$};
\node [below] at (13,0) {$0$};
\node [below] at (11,0) {$0$};
\node [left] at (9.8,0.8) {$1$};
\node [left] at (9.4,2.1) {$-1$};
\node [left] at (9.7,3.8) {$1$};
\node [left] at (11.2,5) {$-1$};

\node [right] at (12.3,4.5) {$2$};
\node [above] at (13.5,2.9) {-$2$};
\node [below] at (14.3,2.8) {$2$};
\node [right] at (13.2,1.4) {-$2$};
\node [above] at (12.5,-0.2) {$2$};
\node [above] at (11,0.5) {-$2$};
\node [right] at (9.6,1.2) {$2$};
\node [right] at (9.7,2.6) {-$2$};
\node [above] at (10.3,2.9) {$2$};
\node [right] at (10.5,4.4) {-$2$};

\draw (10,0) -- (9,3);
\draw (10,0) -- (12,5.5);
\draw [dashed] (10,0) -- (14,0);
\draw (10,0) -- (15,3);
\draw (9,3) -- (12,5.5);
\draw (9,3) -- (15,3);
\draw (9,3) -- (14,0);
\draw [dashed] (12,5.5) -- (15,3);
\draw (12,5.5) -- (14,0);
\draw [dashed] (14,0) -- (15,3);
\end{tikzpicture}
\end{subfigure}
\hfill
\begin{subfigure}{0.2\textwidth}
\centering
\begin{tikzpicture}[scale=0.45]
\draw[fill=black] (10,0) circle (6pt);
\draw[fill=black] (14,0) circle (6pt);
\draw[fill=black] (9,3) circle (6pt);
\draw[fill=black] (15,3) circle (6pt);
\draw[fill=black] (12,5.5) circle (6pt);
\node [below] at (12,-1.3) {$(K_{5},\sigma_7)$};

\node [right] at (12.5,5.2) {$1$};
\node [right] at (14,4) {$1$};
\node [right] at (14.6,2) {$-1$};
\node [right] at (14.1,0.6) {$-1$};
\node [below] at (13,0) {$1$};
\node [below] at (11,0) {$-1$};
\node [left] at (9.8,0.8) {$0$};
\node [left] at (9.4,2.1) {$0$};
\node [left] at (9.7,3.8) {$1$};
\node [left] at (11.2,5) {$-1$};

\node [right] at (12.3,4.5) {$0$};
\node [above] at (13.5,2.9) {-$2$};
\node [below] at (14.3,2.8) {$2$};
\node [right] at (13.4,1.4) {$0$};
\node [above] at (12.5,-0.2) {$2$};
\node [above] at (11,0.5) {-$2$};
\node [right] at (9.6,1.2) {$2$};
\node [right] at (9.7,2.6) {-$2$};
\node [above] at (10.3,2.9) {$2$};
\node [right] at (10.5,4.4) {-$2$};

\draw [dashed] (10,0) -- (9,3);
\draw (10,0) -- (12,5.5);
\draw (10,0) -- (14,0);
\draw (10,0) -- (15,3);
\draw (9,3) -- (12,5.5);
\draw (9,3) -- (15,3);
\draw (9,3) -- (14,0);
\draw [dashed] (12,5.5) -- (15,3);
\draw [dashed] (12,5.5) -- (14,0);
\draw [dashed] (14,0) -- (15,3);
\end{tikzpicture}
\end{subfigure}
\caption{Switching non-isomorphic signed complete graphs of order up to five and their proper edge colorings.}
\label{Figure-proper edge colorings of signed K_n up to 5}
\end{figure}
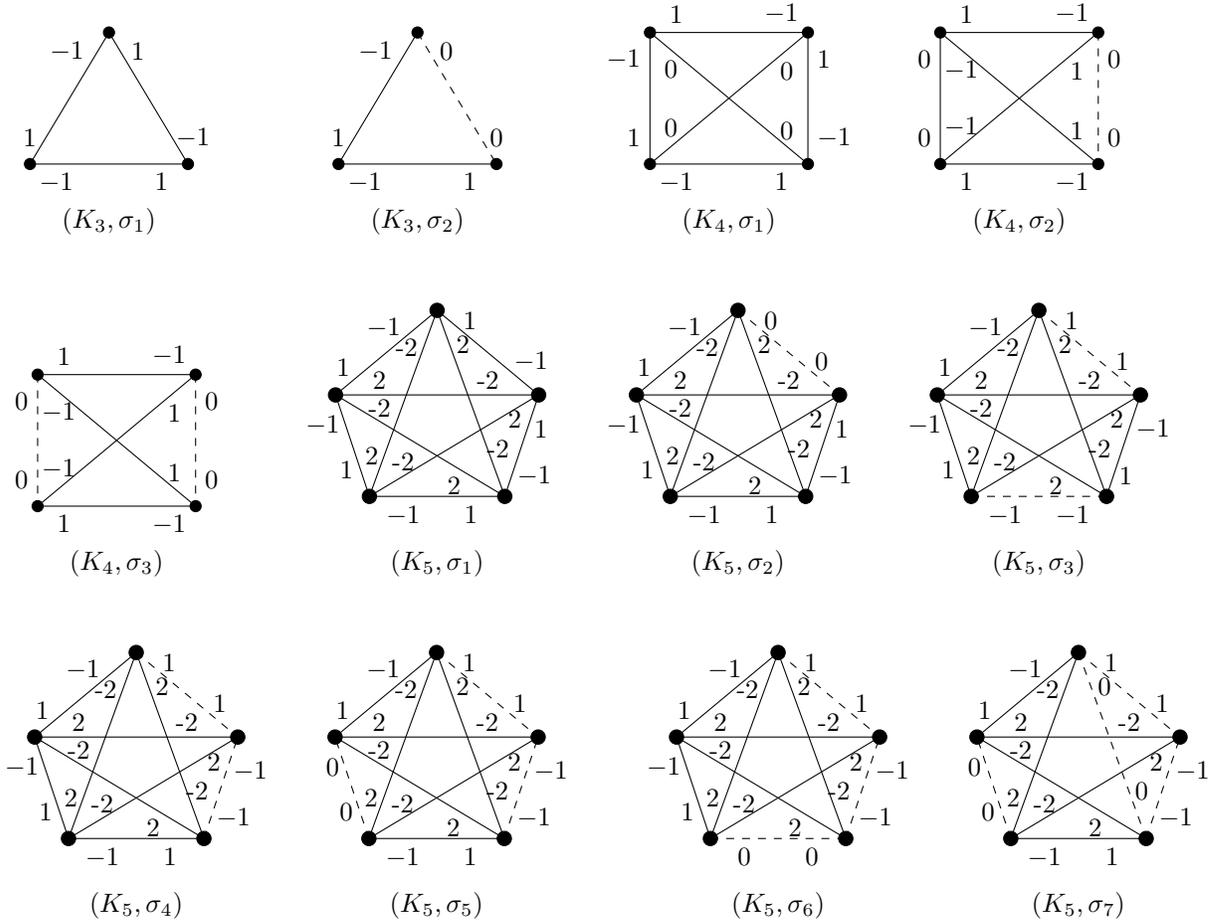

\begin{theorem}\label{Theorem: chro index of signed K_3s}
For any signature $\sigma$, $\chi ' (K_3, \sigma) = \begin{cases}
2,~\text{if}~\sigma \sim \sigma_1,\\
3,~\text{if}~\sigma \sim \sigma_2.
\end{cases}$
\end{theorem}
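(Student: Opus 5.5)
Since $K_3$ is the cycle $C_3$, the statement is a special case of Proposition~\ref{Prop-chro index of signed cycles}, once we identify the two switching classes. By Lemma~\ref{SE-lamma}, two signatures on $K_3$ are switching equivalent exactly when they have the same set of negative cycles. $K_3$ has only one cycle, so there are exactly two classes. The first is the balanced class, represented by $\sigma_1$ (all positive). The second is the unbalanced class, represented by $\sigma_2$ (one negative edge, as in Figure~\ref{Figure-proper edge colorings of signed K_n up to 5}). Every signature $\sigma$ satisfies $\sigma\sim\sigma_1$ or $\sigma\sim\sigma_2$ according as the triangle is positive or negative.

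Next we use Lemma~\ref{Lemma-invariance of chro index}: switching preserves the chromatic index. Hence it suffices to compute $\chi'(K_3,\sigma_1)$ and $\chi'(K_3,\sigma_2)$.

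For $\sigma_1$, the lower bound $\chi'\ge \Delta(K_3)=2$ comes from Theorem~\ref{Vizing's thm}. For the upper bound, the coloring in Figure~\ref{Figure-proper edge colorings of signed K_n up to 5} colors each positive edge with $1$ at one end and $-1$ at the other. Each vertex then sees $\{1,-1\}$, so the coloring is proper and uses two colors. Alternatively, the balanced triangle is a positive circle, so Theorem~\ref{Theorem-cycles in Sigma_a} gives a $\pm 1$ coloring directly.

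For $\sigma_2$, the upper bound $\chi'\le 3$ comes from Theorem~\ref{Vizing's thm}, or from the explicit figure coloring: the negative edge gets $0$ at both ends and the positive path gets $\pm1$.

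The lower bound for $\sigma_2$ is the only step with content. A proper $2$-edge coloring uses only $M_2=\{\pm1\}$, so the whole triangle would be the $1$-graph $\Sigma_1$. By Observation 1 (Theorem~\ref{Theorem-cycles in Sigma_a}), $\Sigma_1$ must be balanced, but this triangle is negative. So no proper $2$-edge coloring exists and $\chi'(K_3,\sigma_2)=3$.
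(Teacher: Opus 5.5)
Your proof is correct and follows the paper's route. The paper just cites the signed-cycle result of Zhang et al.\ (Proposition~\ref{Prop-chro index of signed cycles}) for $C_3$. You do the same, and you also spell out two steps the paper leaves implicit: the identification of the two switching classes via Zaslavsky's lemma, and the lower bound for $\sigma_2$ via Behr's theorem that a negative circle admits no $\pm a$ coloring.
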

\begin{proof}
The proof directly follows from the Proposition~\ref{Prop-chro index of signed cycles}.
\end{proof}

However, signed graphs $(K_3, \sigma_1)$ and $(K_3, \sigma_2)$ and their proper edge colorings are shown in Figure~\ref{Figure-proper edge colorings of signed K_n up to 5}. 

\begin{theorem}\label{Theorem: chro index of signed K_4s}
For any signature $\sigma$, $\chi ' (K_4, \sigma) = 3$.
\end{theorem}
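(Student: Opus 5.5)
The lower bound is immediate: $\Delta(K_4)=3$, so Theorem~\ref{Vizing's thm} gives $\chi'(K_4,\sigma)\geq 3$ for every signature $\sigma$. For the upper bound it suffices, by Lemma~\ref{Lemma-invariance of chro index} and isomorphism invariance, to exhibit a proper $3$-edge coloring (colors from $M_3=\{0,\pm 1\}$) for one representative of each switching isomorphism class. There are three such classes, $(K_4,\sigma_1)$, $(K_4,\sigma_2)$ and $(K_4,\sigma_3)$, shown in Figure~\ref{Figure-proper edge colorings of signed K_n up to 5}. To confirm the list is complete, I would use Lemma~\ref{SE-lamma}. Switching a spanning star reduces any signature to one supported on the triangle opposite a fixed vertex, and the number of negative triangles then distinguishes the classes: $0$ for $\sigma_1$, $2$ for $\sigma_2$ (one negative edge), and $4$ for $\sigma_3$ (two disjoint negative edges).

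The uniform construction is as follows. $K_4$ decomposes into three perfect matchings $M_1,M_2,M_3$, and any two of them form a $4$-cycle. I give $M_1$ the color $0$ at both incidences, which is allowed for either sign. The remaining $4$-cycle $C=M_2\cup M_3$ then receives colors $\pm 1$, and by Theorem~\ref{Theorem-cycles in Sigma_a} this is possible exactly when $C$ is positive. At each vertex the two $C$-edges then carry $1$ and $-1$, and the $M_1$ edge carries $0$, so the coloring is proper.

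The whole proof therefore reduces to one claim: for every signature of $K_4$, some $4$-cycle is positive, so that its complementary matching can be taken as $M_1$. This is the only real step, and it is where I would be careful. The product of the signs of the three $4$-cycles is $\prod_e \sigma(e)^2=1$, since each edge lies in exactly two of them. Hence the number of negative $4$-cycles is even, namely $0$ or $2$, so at least one of the three is positive.

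Alternatively, one can verify directly for the representatives. For $\sigma_1$ any $4$-cycle works. For $\sigma_2$, take $M_1$ to be the matching containing the negative edge. For $\sigma_3$, take $M_1$ to be the two disjoint negative edges. In each case the remaining cycle is all-positive. These choices match the colorings displayed in Figure~\ref{Figure-proper edge colorings of signed K_n up to 5}, and they give $\chi'(K_4,\sigma)=3$ for every $\sigma$.
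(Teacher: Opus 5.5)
Your proof is correct, and its main argument differs from the paper's. The paper gets the upper bound by citing, without proof, that every signed $K_4$ is switching equivalent or isomorphic to one of $\sigma_1,\sigma_2,\sigma_3$. It then exhibits a proper $3$-edge coloring of each representative in the figure.

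You instead give a construction that works for every signature at once. You color a perfect matching $0$ and the complementary $4$-cycle $\pm 1$. A parity identity guarantees that some $4$-cycle is positive: the signs of the three $4$-cycles multiply to $\prod_e \sigma(e)^2 = 1$, because each edge lies in exactly two of them.

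This makes your proof self-contained. It needs neither the enumeration of switching classes nor a case check, so your paragraph verifying the three classes via negative triangles is superfluous, though harmless. It also explains the figure's colorings, which all have exactly this form.

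Your argument is one direction of a general principle. In a proper $\{0,\pm 1\}$-coloring of a cubic signed graph, every vertex sees each of $0,1,-1$ exactly once. So a signed cubic graph is class $1$ precisely when it has a perfect matching whose complementary $2$-factor has only positive cycles.

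The paper's approach is a finite check, following the same template it uses for $K_6$ (and for the positive cases of $K_5$). Your closing direct verification on the representatives is essentially the paper's proof.
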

\begin{proof}
By Theorem~\ref{Vizing's thm}, $3 \leq \chi ' (K_4, \sigma) \leq 4$ for every signature $\sigma$. It is known that every signed $(K_4, \sigma)$ is switching equivalent to one of $(K_4, \sigma_1)$, $(K_4, \sigma_2)$ and $(K_4, \sigma_3)$ which are shown in Figure~\ref{Figure-proper edge colorings of signed K_n up to 5}. Also a proper 3-edge coloring of each $(K_4, \sigma_i)$, for $i=1,2,3$, is given in Figure~\ref{Figure-proper edge colorings of signed K_n up to 5}. This completes the proof.
\end{proof}

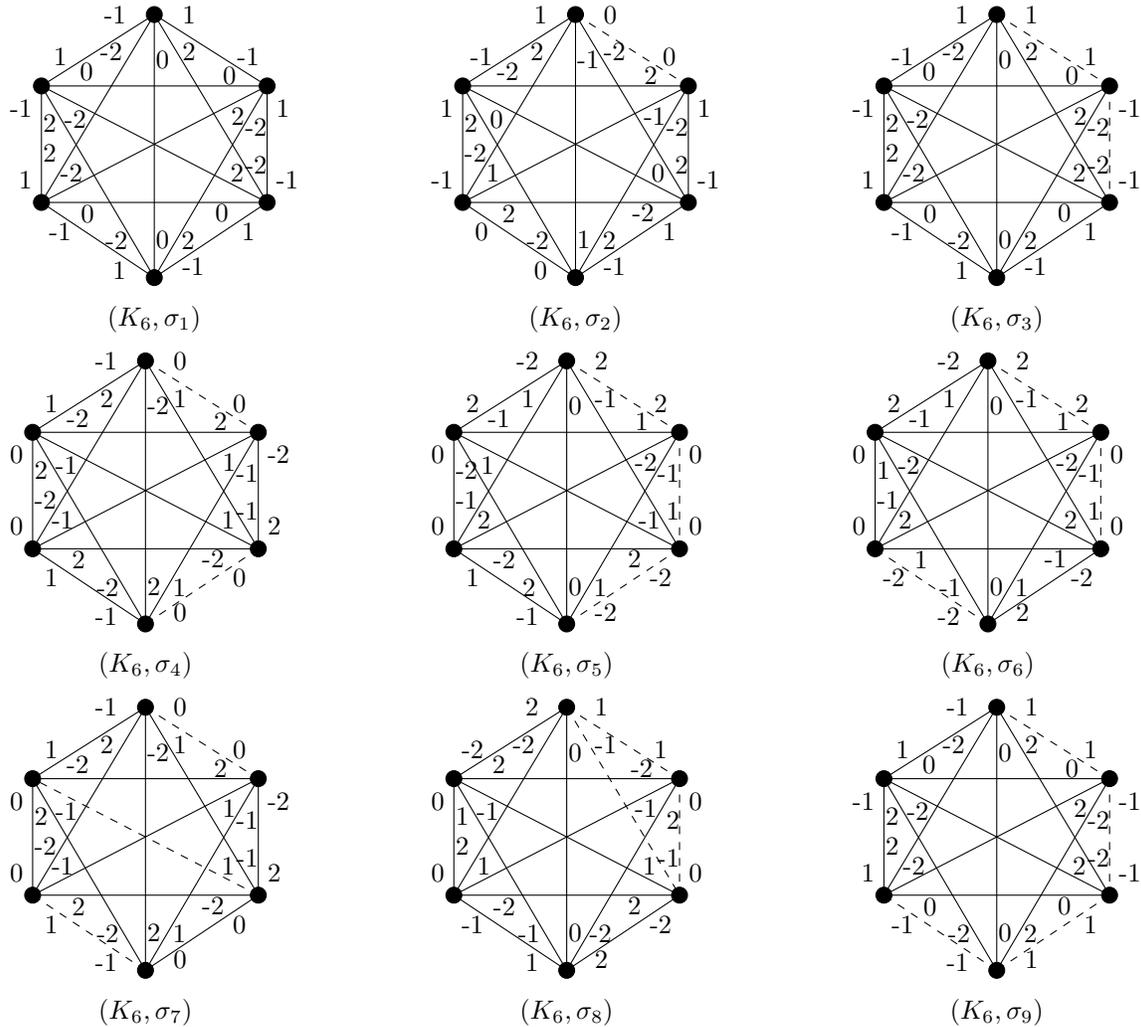
\begin{figure}[ht]
\begin{subfigure}{0.3\textwidth}
\begin{tikzpicture}[scale=0.5]

\draw[fill=black] (12,7) circle (6pt);
\draw[fill=black] (15,5.1) circle (6pt);
\draw[fill=black] (15,2) circle (6pt);
\draw[fill=black] (12,0) circle (6pt);
\draw[fill=black] (9,2) circle (6pt);
\node [below] at (12,-0.5) {$(K_{6},\sigma_1)$};
\draw[fill=black] (9,5.1) circle (6pt);

\node [right] at (12.5,7) {$1$};
\node [above] at (14.5,5.4) {-$1$};
\node [right] at (15,4.5) {$1$};
\node [right] at (15,2.6) {-$1$};
\node [below] at (14.5,1.7) {$1$};
\node [right] at (12.5,0.3) {-$1$};
\node [left] at (11.5,0.2) {$1$};
\node [below] at (9.5,1.7) {-$1$};
\node [left] at (9,2.6) {$1$};
\node [left] at (9,4.5) {-$1$};
\node [above] at (9.5,5.4) {$1$};
\node [left] at (11.5,7) {-$1$};

\node [right] at (12.5,6) {$2$};
\node [above] at (14,4.9) {$0$};
\node [right] at (11.8,5.8) {$0$};

\node [left] at (15.25,4) {-$2$};
\node [left] at (15.25,3) {-$2$};
\node [left] at (14.65,4.3) {$2$};

\node [below] at (13.8,2.2) {$0$};
\node [above] at (12.9,0.5) {$2$};
\node [above] at (14.2,2.3) {$2$};

\node [above] at (11,0.5) {-$2$};
\node [right] at (9.8,1.7) {$0$};
\node [right] at (11.8,1) {$0$};

\node [right] at (8.8,3.3) {$2$};
\node [right] at (8.8,4.1) {$2$};
\node [above] at (9.8,2.3) {-$2$};

\node [left] at (11.4,6) {-$2$};
\node [above] at (10.2,5) {$0$};
\node [below] at (9.9,4.7) {-$2$};

\draw (12,7) -- (15,5.1);
\draw (12,7) -- (15,2);
\draw (12,7) -- (12,0);
\draw (12,7) -- (9,2);
\draw (12,7) -- (9,5.1);
\draw (15,5.1) -- (15,2);
\draw (15,5.1) -- (12,0);
\draw (15,5.1) -- (9,2);
\draw (15,5.1) -- (9,5.1);
\draw (15,2) -- (12,0);
\draw (15,2) -- (9,2);
\draw (15,2) -- (9,5.1);
\draw (12,0) -- (9,2);
\draw (12,0) -- (9,5.1);
\draw (9,2) -- (9,5.1);

\end{tikzpicture}
\end{subfigure}
\hfill
\begin{subfigure}{0.3\textwidth}
\begin{tikzpicture}[scale=0.5]

\draw[fill=black] (12,7) circle (6pt);
\draw[fill=black] (15,5.1) circle (6pt);
\draw[fill=black] (15,2) circle (6pt);
\draw[fill=black] (12,0) circle (6pt);
\draw[fill=black] (9,2) circle (6pt);
\node [below] at (12,-0.5) {$(K_{6},\sigma_2)$};
\draw[fill=black] (9,5.1) circle (6pt);

\node [right] at (12.5,7) {$0$};
\node [above] at (14.5,5.4) {$0$};
\node [right] at (15,4.5) {$1$};
\node [right] at (15,2.6) {-$1$};
\node [below] at (14.5,1.7) {$1$};
\node [right] at (12.5,0.3) {-$1$};
\node [left] at (11.5,0.2) {$0$};
\node [below] at (9.5,1.7) {$0$};
\node [left] at (9,2.6) {-$1$};
\node [left] at (9,4.5) {$1$};
\node [above] at (9.5,5.4) {-$1$};
\node [left] at (11.5,7) {$1$};

\node [right] at (12.5,6) {-$2$};
\node [above] at (14,4.9) {$2$};
\node [right] at (11.8,5.8) {-$1$};

\node [left] at (15.25,4) {-$2$};
\node [left] at (15.25,3) {$2$};
\node [left] at (14.65,4.3) {-$1$};

\node [below] at (13.8,2.2) {-$2$};
\node [above] at (12.9,0.5) {$2$};
\node [above] at (14.2,2.3) {$0$};

\node [above] at (11,0.5) {-$2$};
\node [right] at (9.8,1.7) {$2$};
\node [right] at (11.8,1) {$1$};

\node [right] at (8.8,3.3) {-$2$};
\node [right] at (8.8,4.1) {$2$};
\node [above] at (9.8,2.3) {$1$};

\node [left] at (11.4,6) {$2$};
\node [above] at (10.2,5) {-$2$};
\node [below] at (9.9,4.7) {$0$};

\draw [dashed] (12,7) -- (15,5.1);
\draw (12,7) -- (15,2);
\draw (12,7) -- (12,0);
\draw (12,7) -- (9,2);
\draw (12,7) -- (9,5.1);
\draw (15,5.1) -- (15,2);
\draw (15,5.1) -- (12,0);
\draw (15,5.1) -- (9,2);
\draw (15,5.1) -- (9,5.1);
\draw (15,2) -- (12,0);
\draw (15,2) -- (9,2);
\draw (15,2) -- (9,5.1);
\draw (12,0) -- (9,2);
\draw (12,0) -- (9,5.1);
\draw (9,2) -- (9,5.1);

\end{tikzpicture}
\end{subfigure}
\hfill
\begin{subfigure}{0.3\textwidth}
\begin{tikzpicture}[scale=0.5]
\draw[fill=black] (12,7) circle (6pt);
\draw[fill=black] (15,5.1) circle (6pt);
\draw[fill=black] (15,2) circle (6pt);
\draw[fill=black] (12,0) circle (6pt);
\draw[fill=black] (9,2) circle (6pt);
\node [below] at (12,-0.5) {$(K_{6},\sigma_3)$};
\draw[fill=black] (9,5.1) circle (6pt);

\node [right] at (12.5,7) {$1$};
\node [above] at (14.5,5.4) {$1$};
\node [right] at (15,4.5) {-$1$};
\node [right] at (15,2.6) {-$1$};
\node [below] at (14.5,1.7) {$1$};
\node [right] at (12.5,0.3) {-$1$};
\node [left] at (11.5,0.2) {$1$};
\node [below] at (9.5,1.7) {-$1$};
\node [left] at (9,2.6) {$1$};
\node [left] at (9,4.5) {-$1$};
\node [above] at (9.5,5.4) {-$1$};
\node [left] at (11.5,7) {$1$};

\node [right] at (12.5,6) {$2$};
\node [above] at (14,4.9) {$0$};
\node [right] at (11.8,5.8) {$0$};

\node [left] at (15.25,4) {-$2$};
\node [left] at (15.25,3) {-$2$};
\node [left] at (14.65,4.3) {$2$};

\node [below] at (13.8,2.2) {$0$};
\node [above] at (12.9,0.5) {$2$};
\node [above] at (14.2,2.3) {$2$};

\node [above] at (11,0.5) {-$2$};
\node [right] at (9.8,1.7) {$0$};
\node [right] at (11.8,1) {$0$};

\node [right] at (8.8,3.3) {$2$};
\node [right] at (8.8,4.1) {$2$};
\node [above] at (9.8,2.3) {-$2$};

\node [left] at (11.4,6) {-$2$};
\node [above] at (10.2,5) {$0$};
\node [below] at (9.9,4.7) {-$2$};

\draw [dashed] (12,7) -- (15,5.1);
\draw (12,7) -- (15,2);
\draw (12,7) -- (12,0);
\draw (12,7) -- (9,2);
\draw (12,7) -- (9,5.1);
\draw [dashed] (15,5.1) -- (15,2);
\draw (15,5.1) -- (12,0);
\draw (15,5.1) -- (9,2);
\draw (15,5.1) -- (9,5.1);
\draw (15,2) -- (12,0);
\draw (15,2) -- (9,2);
\draw (15,2) -- (9,5.1);
\draw (12,0) -- (9,2);
\draw (12,0) -- (9,5.1);
\draw (9,2) -- (9,5.1);

\end{tikzpicture}
\end{subfigure}
\hfill
\begin{subfigure}{0.3\textwidth}
\begin{tikzpicture}[scale=0.5]
\draw[fill=black] (12,7) circle (6pt);
\draw[fill=black] (15,5.1) circle (6pt);
\draw[fill=black] (15,2) circle (6pt);
\draw[fill=black] (12,0) circle (6pt);
\draw[fill=black] (9,2) circle (6pt);
\node [below] at (12,-0.5) {$(K_{6},\sigma_4)$};
\draw[fill=black] (9,5.1) circle (6pt);

\node [right] at (12.5,7) {$0$};
\node [above] at (14.5,5.4) {$0$};
\node [right] at (15,4.5) {-$2$};
\node [right] at (15,2.6) {$2$};
\node [below] at (14.5,1.7) {$0$};
\node [right] at (12.5,0.3) {$0$};
\node [left] at (11.5,0.2) {-$1$};
\node [below] at (9.5,1.7) {$1$};
\node [left] at (9,2.6) {$0$};
\node [left] at (9,4.5) {$0$};
\node [above] at (9.5,5.4) {$1$};
\node [left] at (11.5,7) {-$1$};

\node [right] at (12.5,6) {$1$};
\node [above] at (14,4.9) {$2$};
\node [right] at (11.8,5.8) {-$2$};

\node [left] at (15.25,4) {-$1$};
\node [left] at (15.25,3) {-$1$};
\node [left] at (14.65,4.3) {$1$};

\node [below] at (13.8,2.2) {-$2$};
\node [above] at (12.9,0.5) {$1$};
\node [above] at (14.2,2.3) {$1$};

\node [above] at (11,0.5) {-$2$};
\node [right] at (9.8,1.7) {$2$};
\node [right] at (11.8,1) {$2$};

\node [right] at (8.8,3.3) {-$2$};
\node [right] at (8.8,4.1) {$2$};
\node [above] at (9.8,2.3) {-$1$};

\node [left] at (11.4,6) {$2$};
\node [above] at (10.2,5) {-$2$};
\node [below] at (9.9,4.7) {-$1$};

\draw [dashed] (12,7) -- (15,5.1);
\draw (12,7) -- (15,2);
\draw (12,7) -- (12,0);
\draw (12,7) -- (9,2);
\draw (12,7) -- (9,5.1);
\draw (15,5.1) -- (15,2);
\draw (15,5.1) -- (12,0);
\draw (15,5.1) -- (9,2);
\draw (15,5.1) -- (9,5.1);
\draw [dashed] (15,2) -- (12,0);
\draw (15,2) -- (9,2);
\draw (15,2) -- (9,5.1);
\draw (12,0) -- (9,2);
\draw (12,0) -- (9,5.1);
\draw (9,2) -- (9,5.1);

\end{tikzpicture}
\end{subfigure}
\hfill
\begin{subfigure}{0.3\textwidth}
\begin{tikzpicture}[scale=0.5]
\draw[fill=black] (12,7) circle (6pt);
\draw[fill=black] (15,5.1) circle (6pt);
\draw[fill=black] (15,2) circle (6pt);
\draw[fill=black] (12,0) circle (6pt);
\draw[fill=black] (9,2) circle (6pt);
\node [below] at (12,-0.5) {$(K_{6},\sigma_5)$};
\draw[fill=black] (9,5.1) circle (6pt);

\node [right] at (12.5,7) {$2$};
\node [above] at (14.5,5.4) {$2$};
\node [right] at (15,4.5) {$0$};
\node [right] at (15,2.6) {$0$};
\node [below] at (14.5,1.7) {-$2$};
\node [right] at (12.5,0.3) {-$2$};
\node [left] at (11.5,0.2) {-$1$};
\node [below] at (9.5,1.7) {$1$};
\node [left] at (9,2.6) {$0$};
\node [left] at (9,4.5) {$0$};
\node [above] at (9.5,5.4) {$2$};
\node [left] at (11.5,7) {-$2$};

\node [right] at (12.5,6) {-$1$};
\node [above] at (14,4.9) {$1$};
\node [right] at (11.8,5.8) {$0$};

\node [left] at (15.25,4) {-$1$};
\node [left] at (15.25,3) {$1$};
\node [left] at (14.65,4.3) {-$2$};

\node [below] at (13.8,2.2) {$2$};
\node [above] at (12.9,0.5) {$1$};
\node [above] at (14.2,2.3) {-$1$};

\node [above] at (11,0.5) {$2$};
\node [right] at (9.8,1.7) {-$2$};
\node [right] at (11.8,1) {$0$};

\node [right] at (8.8,3.3) {-$1$};
\node [right] at (8.8,4.1) {-$2$};
\node [above] at (9.8,2.3) {$2$};

\node [left] at (11.4,6) {$1$};
\node [above] at (10.2,5) {-$1$};
\node [below] at (9.9,4.7) {$1$};

\draw [dashed] (12,7) -- (15,5.1);
\draw (12,7) -- (15,2);
\draw (12,7) -- (12,0);
\draw (12,7) -- (9,2);
\draw (12,7) -- (9,5.1);
\draw [dashed] (15,5.1) -- (15,2);
\draw (15,5.1) -- (12,0);
\draw (15,5.1) -- (9,2);
\draw (15,5.1) -- (9,5.1);
\draw [dashed] (15,2) -- (12,0);
\draw (15,2) -- (9,2);
\draw (15,2) -- (9,5.1);
\draw (12,0) -- (9,2);
\draw (12,0) -- (9,5.1);
\draw (9,2) -- (9,5.1);

\end{tikzpicture}
\end{subfigure}
\hfill
\begin{subfigure}{0.3\textwidth}
\begin{tikzpicture}[scale=0.5]
\draw[fill=black] (12,7) circle (6pt);
\draw[fill=black] (15,5.1) circle (6pt);
\draw[fill=black] (15,2) circle (6pt);
\draw[fill=black] (12,0) circle (6pt);
\draw[fill=black] (9,2) circle (6pt);
\node [below] at (12,-0.5) {$(K_{6},\sigma_6)$};
\draw[fill=black] (9,5.1) circle (6pt);

\node [right] at (12.5,7) {$2$};
\node [above] at (14.5,5.4) {$2$};
\node [right] at (15,4.5) {$0$};
\node [right] at (15,2.6) {$0$};
\node [below] at (14.5,1.7) {-$2$};
\node [right] at (12.5,0.3) {$2$};
\node [left] at (11.5,0.2) {-$2$};
\node [below] at (9.5,1.7) {-$2$};
\node [left] at (9,2.6) {$0$};
\node [left] at (9,4.5) {$0$};
\node [above] at (9.5,5.4) {$2$};
\node [left] at (11.5,7) {-$2$};

\node [right] at (12.5,6) {-$1$};
\node [above] at (14,4.9) {$1$};
\node [right] at (11.8,5.8) {$0$};

\node [left] at (15.25,4) {-$1$};
\node [left] at (15.25,3) {$1$};
\node [left] at (14.65,4.3) {-$2$};

\node [below] at (13.8,2.2) {-$1$};
\node [above] at (12.9,0.5) {$1$};
\node [above] at (14.2,2.3) {$2$};

\node [above] at (11,0.5) {-$1$};
\node [right] at (9.8,1.7) {$1$};
\node [right] at (11.8,1) {$0$};

\node [right] at (8.8,3.3) {-$1$};
\node [right] at (8.8,4.1) {$1$};
\node [above] at (9.8,2.3) {$2$};

\node [left] at (11.4,6) {$1$};
\node [above] at (10.2,5) {-$1$};
\node [below] at (9.9,4.7) {-$2$};

\draw [dashed] (12,7) -- (15,5.1);
\draw (12,7) -- (15,2);
\draw (12,7) -- (12,0);
\draw (12,7) -- (9,2);
\draw (12,7) -- (9,5.1);
\draw [dashed] (15,5.1) -- (15,2);
\draw (15,5.1) -- (12,0);
\draw (15,5.1) -- (9,2);
\draw (15,5.1) -- (9,5.1);
\draw (15,2) -- (12,0);
\draw (15,2) -- (9,2);
\draw (15,2) -- (9,5.1);
\draw [dashed] (12,0) -- (9,2);
\draw (12,0) -- (9,5.1);
\draw (9,2) -- (9,5.1);
\end{tikzpicture}
\end{subfigure}
\hfill
\begin{subfigure}{0.3\textwidth}
\begin{tikzpicture}[scale=0.5]
\draw[fill=black] (12,7) circle (6pt);
\draw[fill=black] (15,5.1) circle (6pt);
\draw[fill=black] (15,2) circle (6pt);
\draw[fill=black] (12,0) circle (6pt);
\draw[fill=black] (9,2) circle (6pt);
\node [below] at (12,-0.5) {$(K_{6},\sigma_7)$};
\draw[fill=black] (9,5.1) circle (6pt);

\node [right] at (12.5,7) {$0$};
\node [above] at (14.5,5.4) {$0$};
\node [right] at (15,4.5) {-$2$};
\node [right] at (15,2.6) {$2$};
\node [below] at (14.5,1.7) {$0$};
\node [right] at (12.5,0.3) {$0$};
\node [left] at (11.5,0.2) {-$1$};
\node [below] at (9.5,1.7) {$1$};
\node [left] at (9,2.6) {$0$};
\node [left] at (9,4.5) {$0$};
\node [above] at (9.5,5.4) {$1$};
\node [left] at (11.5,7) {-$1$};

\node [right] at (12.5,6) {$1$};
\node [above] at (14,4.9) {$2$};
\node [right] at (11.8,5.8) {-$2$};

\node [left] at (15.25,4) {-$1$};
\node [left] at (15.25,3) {-$1$};
\node [left] at (14.65,4.3) {$1$};

\node [below] at (13.8,2.2) {-$2$};
\node [above] at (12.9,0.5) {$1$};
\node [above] at (14.2,2.3) {$1$};

\node [above] at (11,0.5) {-$2$};
\node [right] at (9.8,1.7) {$2$};
\node [right] at (11.8,1) {$2$};

\node [right] at (8.8,3.3) {-$2$};
\node [right] at (8.8,4.1) {$2$};
\node [above] at (9.8,2.3) {-$1$};

\node [left] at (11.4,6) {$2$};
\node [above] at (10.2,5) {-$2$};
\node [below] at (9.9,4.7) {-$1$};

\draw [dashed] (12,7) -- (15,5.1);
\draw (12,7) -- (15,2);
\draw (12,7) -- (12,0);
\draw (12,7) -- (9,2);
\draw (12,7) -- (9,5.1);
\draw (15,5.1) -- (15,2);
\draw (15,5.1) -- (12,0);
\draw (15,5.1) -- (9,2);
\draw (15,5.1) -- (9,5.1);
\draw (15,2) -- (12,0);
\draw (15,2) -- (9,2);
\draw [dashed] (15,2) -- (9,5.1);
\draw [dashed] (12,0) -- (9,2);
\draw (12,0) -- (9,5.1);
\draw (9,2) -- (9,5.1);

\end{tikzpicture}
\end{subfigure}
\hfill
\begin{subfigure}{0.3\textwidth}
\begin{tikzpicture}[scale=0.5]
\draw[fill=black] (12,7) circle (6pt);
\draw[fill=black] (15,5.1) circle (6pt);
\draw[fill=black] (15,2) circle (6pt);
\draw[fill=black] (12,0) circle (6pt);
\draw[fill=black] (9,2) circle (6pt);
\node [below] at (12,-0.5) {$(K_{6},\sigma_8)$};
\draw[fill=black] (9,5.1) circle (6pt);

\node [right] at (12.5,7) {$1$};
\node [above] at (14.5,5.4) {$1$};
\node [right] at (15,4.5) {$0$};
\node [right] at (15,2.6) {$0$};
\node [below] at (14.5,1.7) {-$2$};
\node [right] at (12.5,0.3) {$2$};
\node [left] at (11.5,0.2) {$1$};
\node [below] at (9.5,1.7) {-$1$};
\node [left] at (9,2.6) {$0$};
\node [left] at (9,4.5) {$0$};
\node [above] at (9.5,5.4) {-$2$};
\node [left] at (11.5,7) {$2$};

\node [right] at (12.5,6) {-$1$};
\node [above] at (14,4.9) {-$2$};
\node [right] at (11.8,5.8) {$0$};

\node [left] at (15.25,4) {$2$};
\node [left] at (15.25,3) {-$1$};
\node [left] at (14.65,4.3) {-$1$};

\node [below] at (13.8,2.2) {$2$};
\node [above] at (12.9,0.5) {-$2$};
\node [above] at (14.2,2.3) {$1$};

\node [above] at (11,0.5) {-$1$};
\node [right] at (9.8,1.7) {-$2$};
\node [right] at (11.8,1) {$0$};

\node [right] at (8.8,3.3) {$2$};
\node [right] at (8.8,4.1) {$1$};
\node [above] at (9.8,2.3) {$1$};

\node [left] at (11.4,6) {-$2$};
\node [above] at (10.2,5) {$2$};
\node [below] at (9.9,4.7) {-$1$};

\draw [dashed] (12,7) -- (15,5.1);
\draw [dashed] (12,7) -- (15,2);
\draw (12,7) -- (12,0);
\draw (12,7) -- (9,2);
\draw (12,7) -- (9,5.1);
\draw [dashed] (15,5.1) -- (15,2);
\draw (15,5.1) -- (12,0);
\draw (15,5.1) -- (9,2);
\draw (15,5.1) -- (9,5.1);
\draw (15,2) -- (12,0);
\draw (15,2) -- (9,2);
\draw (15,2) -- (9,5.1);
\draw (12,0) -- (9,2);
\draw (12,0) -- (9,5.1);
\draw (9,2) -- (9,5.1);

\end{tikzpicture}
\end{subfigure}
\hfill
\begin{subfigure}{0.3\textwidth}
\begin{tikzpicture}[scale=0.5]
\draw[fill=black] (12,7) circle (6pt);
\draw[fill=black] (15,5.1) circle (6pt);
\draw[fill=black] (15,2) circle (6pt);
\draw[fill=black] (12,0) circle (6pt);
\draw[fill=black] (9,2) circle (6pt);
\node [below] at (12,-0.5) {$(K_{6},\sigma_9)$};
\draw[fill=black] (9,5.1) circle (6pt);

\node [right] at (12.5,7) {$1$};
\node [above] at (14.5,5.4) {$1$};
\node [right] at (15,4.5) {-$1$};
\node [right] at (15,2.6) {-$1$};
\node [below] at (14.5,1.7) {$1$};
\node [right] at (12.5,0.3) {$1$};
\node [left] at (11.5,0.2) {-$1$};
\node [below] at (9.5,1.7) {-$1$};
\node [left] at (9,2.6) {$1$};
\node [left] at (9,4.5) {-$1$};
\node [above] at (9.5,5.4) {$1$};
\node [left] at (11.5,7) {-$1$};

\node [right] at (12.5,6) {$2$};
\node [above] at (14,4.9) {$0$};
\node [right] at (11.8,5.8) {$0$};

\node [left] at (15.25,4) {-$2$};
\node [left] at (15.25,3) {-$2$};
\node [left] at (14.65,4.3) {$2$};

\node [below] at (13.8,2.2) {$0$};
\node [above] at (12.9,0.5) {$2$};
\node [above] at (14.2,2.3) {$2$};

\node [above] at (11,0.5) {-$2$};
\node [right] at (9.8,1.7) {$0$};
\node [right] at (11.8,1) {$0$};

\node [right] at (8.8,3.3) {$2$};
\node [right] at (8.8,4.1) {$2$};
\node [above] at (9.8,2.3) {-$2$};

\node [left] at (11.4,6) {-$2$};
\node [above] at (10.2,5) {$0$};
\node [below] at (9.9,4.7) {-$2$};

\draw [dashed] (12,7) -- (15,5.1);
\draw (12,7) -- (15,2);
\draw (12,7) -- (12,0);
\draw (12,7) -- (9,2);
\draw (12,7) -- (9,5.1);
\draw [dashed] (15,5.1) -- (15,2);
\draw (15,5.1) -- (12,0);
\draw (15,5.1) -- (9,2);
\draw (15,5.1) -- (9,5.1);
\draw [dashed] (15,2) -- (12,0);
\draw (15,2) -- (9,2);
\draw (15,2) -- (9,5.1);
\draw [dashed] (12,0) -- (9,2);
\draw (12,0) -- (9,5.1);
\draw (9,2) -- (9,5.1);

\end{tikzpicture}
\end{subfigure}
\caption{Some switching non-isomorphic signed complete graphs over $K_6$ and their proper edge colorings.}
\label{Figure-1-SCGs on 6 vertices}
\end{figure}

\begin{prop}\label{Proposition: Prop required for coloring of signed K_5}
For any signature $\sigma$, if $\Sigma=(K_5,\sigma)$ and $\chi'(\Sigma) =4$, then $\Sigma_1$ and $\Sigma_2$ must be positive cycles of length five.
\end{prop}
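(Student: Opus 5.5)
Since $q=4$ is even, $M_4=\{\pm1,\pm2\}$ contains no $0$. Every edge of $\Sigma$ therefore lies in exactly one of $\Sigma_1$ and $\Sigma_2$, and $E(K_5)=E(\Sigma_1)\,\dot\cup\, E(\Sigma_2)$. The proof is a counting argument on degrees, followed by a short structural step.

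\textbf{Degree count.} Each vertex $v$ of $K_5$ has degree $4$, and its four incidences receive four distinct colors from $\{1,-1,2,-2\}$. So each of $\pm1$ and $\pm2$ occurs exactly once at $v$. Hence $v$ has degree exactly $2$ in $\Sigma_1$ and exactly $2$ in $\Sigma_2$. Thus $\Sigma_1$ and $\Sigma_2$ are spanning $2$-regular subgraphs of $K_5$, each with $5$ edges.

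\textbf{Structure.} A $2$-regular graph is a disjoint union of cycles, each of length at least $3$ since $K_5$ is simple. On $5$ vertices the only partition into parts of size at least $3$ is the single part $5$, because $5=3+2$ is impossible with a part of size $2$. Hence $\Sigma_1$ and $\Sigma_2$ are each a Hamiltonian $5$-cycle; indeed they are the two complementary $5$-cycles decomposing $K_5$.

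\textbf{Sign of the cycles.} By Observation 1, or directly by Theorem~\ref{Theorem-cycles in Sigma_a}, a cycle properly colored with $\pm a$ for $a\neq 0$ must be positive. So both $5$-cycles are positive, which proves the proposition.

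The only point needing care is the observation that the absence of $0$ forces every color to appear at every vertex. Once that is in place, the rest follows immediately from the cycle classification.
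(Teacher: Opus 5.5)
Your proof is correct and follows the same route as the paper: split the edges by absolute color into $\Sigma_1$ and $\Sigma_2$, then use Observation 1 (Behr's classification of $\pm a$-colorable cycles) to get positivity. Your degree count is exactly the step the paper compresses into the word ``Consequently'': all four colors $\pm1,\pm2$ must appear at each degree-$4$ vertex, so each $\Sigma_c$ is a spanning $2$-regular subgraph and hence a Hamiltonian $5$-cycle. Your write-up is, if anything, more complete.
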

\begin{proof}
Let $\sigma$ be a signature such that $\chi'(\Sigma) =4$ and suppose $\gamma$ is a corresponding proper 4-edge coloring of $\Sigma$, where $\Sigma=(K_5,\sigma)$. If the set of colors is $\{ \pm 1, \pm 2\}$ for coloring $\gamma$, then each component of $\Sigma_1[\gamma]$ and $\Sigma_2[\gamma]$ is either a path or a positive cycle (due to Observation 1). Thus the coloring $\gamma$ corresponds to a partition of the edges of $\Sigma$ into balanced subgraphs of maximum degree 2. Consequently, $\Sigma_1$ and $\Sigma_2$ must be positive cycles of length 5. 
\end{proof}

\begin{theorem}\label{Theorem: chro index of signed K_5s}
For any signature $\sigma$, $\chi ' (K_5, \sigma) = \begin{cases}
4,~\text{if}~\sigma \sim \sigma_i~\text{in Fig.~\ref{Figure-proper edge colorings of signed K_n up to 5} for}~i=1,3,4;\\
5,~\text{otherwise}.
\end{cases}$
\end{theorem}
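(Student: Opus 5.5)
The plan is to combine Theorem~\ref{Vizing's thm} with the explicit colorings already drawn in Figure~\ref{Figure-proper edge colorings of signed K_n up to 5}, and to use Proposition~\ref{Proposition: Prop required for coloring of signed K_5} to rule out $4$-colorings for the remaining classes. By Theorem~\ref{Vizing's thm}, $4 \le \chi'(K_5,\sigma) \le 5$ for every $\sigma$. By Lemma~\ref{Lemma-invariance of chro index} it suffices to treat the seven representatives $\sigma_1,\dots,\sigma_7$.

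\textbf{Classes $\sigma_1,\sigma_3,\sigma_4$.} Figure~\ref{Figure-proper edge colorings of signed K_n up to 5} gives proper colorings of $(K_5,\sigma_1)$, $(K_5,\sigma_3)$ and $(K_5,\sigma_4)$ using only $\pm1,\pm2$. Hence these have chromatic index $4$. I would only spot-check that both incidences of each negative edge carry equal colors and that each vertex sees four distinct colors.

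\textbf{Reformulating a $4$-coloring.} Suppose $\chi'(K_5,\sigma)=4$. By Proposition~\ref{Proposition: Prop required for coloring of signed K_5}, the edge set splits into two positive Hamiltonian $5$-cycles $\Sigma_1$ and $\Sigma_2$. In $K_5$ the complement of a Hamiltonian cycle is again a Hamiltonian cycle, so the condition is: some Hamiltonian cycle $C$ is positive and its complement is positive. Since $\sigma(C)\,\sigma(\overline{C})=\prod_{e\in E(K_5)}\sigma(e)$, a necessary condition is that the number of negative edges of $\sigma$ is even. This parity is a switching invariant, because switching a vertex of $K_5$ flips four edges.

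\textbf{Classes $\sigma_2,\sigma_5,\sigma_6$.} From Figure~\ref{Figure-proper edge colorings of signed K_n up to 5}, $\sigma_2$ has one negative edge and $\sigma_5,\sigma_6$ have three, so all have an odd number. Hence no such decomposition exists and $\chi'=5$ for these classes.

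\textbf{Class $\sigma_7$ (the main obstacle).} This is the real work, since $\sigma_7$ has four negative edges and parity alone gives nothing. Its negative edges form a triangle on three vertices $A,B,C$ together with the edge $DE$ joining the other two vertices. I will show every Hamiltonian cycle $H$ of $K_5$ is negative. Let $t$ be the number of edges of $H$ inside $\{A,B,C\}$, let $d\in\{0,1\}$ indicate whether $DE\in H$, and let $x$ be the number of edges of $H$ between the two parts. Counting degrees in $H$ gives $6=2t+x$ on the side $\{A,B,C\}$ and $4=2d+x$ on the side $\{D,E\}$. Hence $t=d+1$, so $H$ has $t+d=2d+1$ negative edges, an odd number. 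Thus no positive Hamiltonian cycle exists, the Proposition excludes a $4$-coloring, and $\chi'(K_5,\sigma_7)=5$.

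Combining the four steps with Vizing's bound yields the stated dichotomy.
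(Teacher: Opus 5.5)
Your proof is correct and follows the paper's route: Vizing's bound, the figure colorings for $\sigma_1,\sigma_3,\sigma_4$, and the Proposition (a $4$-coloring forces two complementary positive Hamiltonian cycles) to exclude $4$-colorings for $\sigma_2,\sigma_5,\sigma_6,\sigma_7$. Your parity identity $\sigma(C)\sigma(\overline{C})=\prod_{e}\sigma(e)$ and your degree count $t=d+1$ for $\sigma_7$ supply the justification that the paper only states informally (``exactly 3 negative edges'' and ``easy to verify that every $C_5$ is negative'').
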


\begin{figure}[ht]
\begin{subfigure}{0.3\textwidth}
\begin{tikzpicture}[scale=0.5]
\draw[fill=black] (12,7) circle (6pt);
\draw[fill=black] (15,5.1) circle (6pt);
\draw[fill=black] (15,2) circle (6pt);
\draw[fill=black] (12,0) circle (6pt);
\draw[fill=black] (9,2) circle (6pt);
\node [below] at (12,-0.5) {$(K_{6},\sigma_{10})$};
\draw[fill=black] (9,5.1) circle (6pt);

\node [right] at (12.5,7) {$1$};
\node [above] at (14.5,5.4) {$1$};
\node [right] at (15,4.5) {-$1$};
\node [right] at (15,2.6) {-$1$};
\node [below] at (14.5,1.7) {$1$};
\node [right] at (12.5,0.2) {$1$};
\node [left] at (11.5,0.2) {-$1$};
\node [below] at (9.5,1.7) {$1$};
\node [left] at (9,2.6) {-$1$};
\node [left] at (9,4.5) {-$1$};
\node [above] at (9.5,5.4) {$1$};
\node [left] at (11.5,7) {-$1$};

\node [right] at (12.5,6) {$2$};
\node [above] at (14,4.9) {$0$};
\node [right] at (11.9,5.8) {$0$};

\node [left] at (15.25,4) {-$2$};
\node [left] at (15.25,3) {-$2$};
\node [left] at (14.65,4.3) {$2$};

\node [below] at (13.8,2.2) {$0$};
\node [above] at (12.9,0.5) {$2$};
\node [above] at (14.2,2.3) {$2$};

\node [above] at (11,0.5) {-$2$};
\node [right] at (9.8,1.6) {$0$};
\node [right] at (11.8,1) {$0$};

\node [right] at (8.8,3.3) {$2$};
\node [right] at (8.8,4.1) {$2$};
\node [above] at (9.8,2.3) {-$2$};

\node [left] at (11.4,6) {-$2$};
\node [above] at (10.2,5) {$0$};
\node [below] at (9.9,4.7) {-$2$};

\draw [dashed] (12,7) -- (15,5.1);
\draw (12,7) -- (15,2);
\draw (12,7) -- (12,0);
\draw (12,7) -- (9,2);
\draw (12,7) -- (9,5.1);
\draw [dashed] (15,5.1) -- (15,2);
\draw (15,5.1) -- (12,0);
\draw (15,5.1) -- (9,2);
\draw (15,5.1) -- (9,5.1);
\draw [dashed] (15,2) -- (12,0);
\draw (15,2) -- (9,2);
\draw (15,2) -- (9,5.1);
\draw (12,0) -- (9,2);
\draw (12,0) -- (9,5.1);
\draw [dashed] (9,2) -- (9,5.1);

\end{tikzpicture}
\end{subfigure}
\hfill
\begin{subfigure}{0.3\textwidth}
\begin{tikzpicture}[scale=0.5]
\draw[fill=black] (12,7) circle (6pt);
\draw[fill=black] (15,5.1) circle (6pt);
\draw[fill=black] (15,2) circle (6pt);
\draw[fill=black] (12,0) circle (6pt);
\draw[fill=black] (9,2) circle (6pt);
\node [below] at (12,-0.5) {$(K_{6},\sigma_{11})$};
\draw[fill=black] (9,5.1) circle (6pt);

\node [right] at (12.5,7) {$1$};
\node [above] at (14.5,5.4) {$1$};
\node [right] at (15,4.5) {-$1$};
\node [right] at (15,2.6) {$1$};
\node [below] at (14.5,1.7) {-$1$};
\node [right] at (12.5,0.3) {-$1$};
\node [left] at (11.5,0.2) {$1$};
\node [below] at (9.5,1.7) {$1$};
\node [left] at (9,2.6) {-$1$};
\node [left] at (9,4.5) {$1$};
\node [above] at (9.5,5.4) {-$1$};
\node [left] at (11.5,7) {-$1$};

\node [right] at (12.5,6) {$2$};
\node [above] at (14,4.9) {$0$};
\node [right] at (11.8,5.8) {$0$};

\node [left] at (15.25,4) {-$2$};
\node [left] at (15.25,3) {-$2$};
\node [left] at (14.65,4.3) {$2$};

\node [below] at (13.8,2.2) {$0$};
\node [above] at (12.9,0.5) {$2$};
\node [above] at (14.2,2.3) {$2$};

\node [above] at (11,0.5) {-$2$};
\node [right] at (9.8,1.7) {$0$};
\node [right] at (11.8,1) {$0$};

\node [right] at (8.8,3.3) {$2$};
\node [right] at (8.8,4.1) {$2$};
\node [above] at (9.8,2.3) {-$2$};

\node [left] at (11.4,6) {-$2$};
\node [above] at (10.2,5) {$0$};
\node [below] at (9.9,4.7) {-$2$};

\draw [dashed] (12,7) -- (15,5.1);
\draw (12,7) -- (15,2);
\draw (12,7) -- (12,0);
\draw (12,7) -- (9,2);
\draw [dashed] (12,7) -- (9,5.1);
\draw (15,5.1) -- (15,2);
\draw (15,5.1) -- (12,0);
\draw (15,5.1) -- (9,2);
\draw (15,5.1) -- (9,5.1);
\draw [dashed] (15,2) -- (12,0);
\draw (15,2) -- (9,2);
\draw (15,2) -- (9,5.1);
\draw [dashed] (12,0) -- (9,2);
\draw (12,0) -- (9,5.1);
\draw (9,2) -- (9,5.1);

\end{tikzpicture}
\end{subfigure}
\hfill
\begin{subfigure}{0.3\textwidth}
\begin{tikzpicture}[scale=0.5]
\draw[fill=black] (12,7) circle (6pt);
\draw[fill=black] (15,5.1) circle (6pt);
\draw[fill=black] (15,2) circle (6pt);
\draw[fill=black] (12,0) circle (6pt);
\draw[fill=black] (9,2) circle (6pt);
\node [below] at (12,-0.5) {$(K_{6},\sigma_{12})$};
\draw[fill=black] (9,5.1) circle (6pt);

\node [right] at (12.5,7) {$1$};
\node [above] at (14.5,5.4) {$1$};
\node [right] at (15,4.5) {$0$};
\node [right] at (15,2.6) {$0$};
\node [below] at (14.5,1.7) {-$2$};
\node [right] at (12.5,0.3) {$2$};
\node [left] at (11.5,0.2) {$1$};
\node [below] at (9.5,1.7) {-$1$};
\node [left] at (9,2.6) {$0$};
\node [left] at (9,4.5) {$0$};
\node [above] at (9.5,5.4) {-$2$};
\node [left] at (11.5,7) {$2$};

\node [right] at (12.5,6) {-$1$};
\node [above] at (14,4.9) {-$2$};
\node [right] at (11.8,5.8) {$0$};

\node [left] at (15.25,4) {$2$};
\node [left] at (15.25,3) {-$1$};
\node [left] at (14.65,4.3) {-$1$};

\node [below] at (13.8,2.2) {$2$};
\node [above] at (12.9,0.5) {-$2$};
\node [above] at (14.2,2.3) {$1$};

\node [above] at (11,0.5) {-$1$};
\node [right] at (9.8,1.7) {-$2$};
\node [right] at (11.8,1) {$0$};

\node [right] at (8.8,3.3) {$2$};
\node [right] at (8.8,4.1) {$1$};
\node [above] at (9.8,2.3) {$1$};

\node [left] at (11.4,6) {-$2$};
\node [above] at (10.2,5) {$2$};
\node [below] at (9.9,4.7) {-$1$};

\draw [dashed] (12,7) -- (15,5.1);
\draw [dashed] (12,7) -- (15,2);
\draw (12,7) -- (12,0);
\draw (12,7) -- (9,2);
\draw (12,7) -- (9,5.1);
\draw [dashed] (15,5.1) -- (15,2);
\draw (15,5.1) -- (12,0);
\draw (15,5.1) -- (9,2);
\draw (15,5.1) -- (9,5.1);
\draw (15,2) -- (12,0);
\draw (15,2) -- (9,2);
\draw (15,2) -- (9,5.1);
\draw (12,0) -- (9,2);
\draw (12,0) -- (9,5.1);
\draw [dashed] (9,2) -- (9,5.1);

\end{tikzpicture}
\end{subfigure}
\hfill
\begin{subfigure}{0.3\textwidth}
\begin{tikzpicture}[scale=0.5]
\draw[fill=black] (12,7) circle (6pt);
\draw[fill=black] (15,5.1) circle (6pt);
\draw[fill=black] (15,2) circle (6pt);
\draw[fill=black] (12,0) circle (6pt);
\draw[fill=black] (9,2) circle (6pt);
\node [below] at (12,-0.5) {$(K_{6},\sigma_{13})$};
\draw[fill=black] (9,5.1) circle (6pt);

\node [right] at (12.5,7) {-$1$};
\node [above] at (14.5,5.4) {-$1$};
\node [right] at (15,4.5) {$1$};
\node [right] at (15,2.6) {$1$};
\node [below] at (14.5,1.7) {$2$};
\node [right] at (12.5,0.3) {-$2$};
\node [left] at (11.5,0.2) {$0$};
\node [below] at (9.5,1.7) {$0$};
\node [left] at (9,2.6) {-$1$};
\node [left] at (9,4.5) {$1$};
\node [above] at (9.5,5.4) {$2$};
\node [left] at (11.5,7) {$2$};

\node [right] at (12.5,6) {$0$};
\node [above] at (14,4.9) {$0$};
\node [right] at (11.8,5.8) {$1$};

\node [left] at (15.25,4) {-$2$};
\node [left] at (15.25,3) {$0$};
\node [left] at (14.65,4.3) {$2$};

\node [below] at (13.8,2.2) {-$1$};
\node [above] at (12.9,0.5) {$2$};
\node [above] at (14.2,2.3) {-$2$};

\node [above] at (11,0.5) {$1$};
\node [right] at (9.8,1.7) {$1$};
\node [right] at (11.8,1) {-$1$};

\node [right] at (8.8,3.3) {$2$};
\node [right] at (8.8,4.1) {-$1$};
\node [above] at (9.8,2.3) {-$2$};

\node [left] at (11.4,6) {-$2$};
\node [above] at (10.2,5) {$0$};
\node [below] at (9.9,4.7) {-$2$};

\draw [dashed] (12,7) -- (15,5.1);
\draw (12,7) -- (15,2);
\draw (12,7) -- (12,0);
\draw (12,7) -- (9,2);
\draw [dashed] (12,7) -- (9,5.1);
\draw [dashed] (15,5.1) -- (15,2);
\draw (15,5.1) -- (12,0);
\draw (15,5.1) -- (9,2);
\draw (15,5.1) -- (9,5.1);
\draw (15,2) -- (12,0);
\draw (15,2) -- (9,2);
\draw [dashed] (15,2) -- (9,5.1);
\draw (12,0) -- (9,2);
\draw (12,0) -- (9,5.1);
\draw (9,2) -- (9,5.1);

\end{tikzpicture}
\end{subfigure}
\hfill
\begin{subfigure}{0.3\textwidth}
\begin{tikzpicture}[scale=0.5]
\draw[fill=black] (12,7) circle (6pt);
\draw[fill=black] (15,5.1) circle (6pt);
\draw[fill=black] (15,2) circle (6pt);
\draw[fill=black] (12,0) circle (6pt);
\draw[fill=black] (9,2) circle (6pt);
\node [below] at (12,-0.5) {$(K_{6},\sigma_{14})$};
\draw[fill=black] (9,5.1) circle (6pt);

\node [right] at (12.5,7) {$1$};
\node [above] at (14.5,5.4) {$1$};
\node [right] at (15,4.5) {-$1$};
\node [right] at (15,2.6) {-$1$};
\node [below] at (14.5,1.7) {$1$};
\node [right] at (12.5,0.3) {-$1$};
\node [left] at (11.5,0.2) {$0$};
\node [below] at (9.5,1.7) {$0$};
\node [left] at (9,2.6) {-$1$};
\node [left] at (9,4.5) {-$1$};
\node [above] at (9.5,5.4) {-$2$};
\node [left] at (11.5,7) {$2$};

\node [right] at (12.5,6) {$0$};
\node [above] at (14,4.9) {$0$};
\node [right] at (11.8,5.8) {-$2$};

\node [left] at (15.25,4) {$2$};
\node [left] at (15.25,3) {$0$};
\node [left] at (14.65,4.3) {-$2$};

\node [below] at (13.8,2.2) {$2$};
\node [above] at (12.9,0.5) {-$2$};
\node [above] at (14.2,2.3) {-$2$};

\node [above] at (11,0.5) {$1$};
\node [right] at (9.8,1.7) {-$2$};
\node [right] at (11.8,1) {$2$};

\node [right] at (8.8,3.3) {$1$};
\node [right] at (8.8,4.1) {$1$};
\node [above] at (9.8,2.3) {$2$};

\node [left] at (11.4,6) {-$1$};
\node [above] at (10.2,5) {$0$};
\node [below] at (9.9,4.7) {$2$};

\draw [dashed] (12,7) -- (15,5.1);
\draw [dashed] (12,7) -- (15,2);
\draw (12,7) -- (12,0);
\draw (12,7) -- (9,2);
\draw (12,7) -- (9,5.1);
\draw [dashed] (15,5.1) -- (15,2);
\draw (15,5.1) -- (12,0);
\draw (15,5.1) -- (9,2);
\draw (15,5.1) -- (9,5.1);
\draw (15,2) -- (12,0);
\draw (15,2) -- (9,2);
\draw (15,2) -- (9,5.1);
\draw (12,0) -- (9,2);
\draw [dashed] (12,0) -- (9,5.1);
\draw [dashed] (9,2) -- (9,5.1);

\end{tikzpicture}
\end{subfigure}
\hfill
\begin{subfigure}{0.3\textwidth}
\begin{tikzpicture}[scale=0.5]
\draw[fill=black] (12,7) circle (6pt);
\draw[fill=black] (15,5.1) circle (6pt);
\draw[fill=black] (15,2) circle (6pt);
\draw[fill=black] (12,0) circle (6pt);
\draw[fill=black] (9,2) circle (6pt);
\node [below] at (12,-0.5) {$(K_{6},\sigma_{15})$};
\draw[fill=black] (9,5.1) circle (6pt);

\node [right] at (12.5,7) {$1$};
\node [above] at (14.5,5.4) {$1$};
\node [right] at (15,4.5) {-$1$};
\node [right] at (15,2.6) {-$1$};
\node [below] at (14.5,1.7) {$1$};
\node [right] at (12.5,0.3) {$1$};
\node [left] at (11.5,0.2) {-$1$};
\node [below] at (9.5,1.7) {-$1$};
\node [left] at (9,2.6) {$1$};
\node [left] at (9,4.5) {-$1$};
\node [above] at (9.5,5.4) {$1$};
\node [left] at (11.5,7) {-$1$};

\node [right] at (12.5,6) {$2$};
\node [above] at (14,4.9) {$2$};
\node [right] at (11.8,5.8) {-$2$};

\node [left] at (15.25,4) {$0$};
\node [left] at (15.25,3) {-$2$};
\node [left] at (14.65,4.3) {-$2$};

\node [below] at (13.8,2.2) {$2$};
\node [above] at (12.9,0.5) {$0$};
\node [above] at (14.2,2.3) {$0$};

\node [above] at (11,0.5) {-$2$};
\node [right] at (9.8,1.7) {-$2$};
\node [right] at (11.8,1) {$2$};

\node [right] at (8.8,3.3) {$0$};
\node [right] at (8.8,4.1) {$2$};
\node [above] at (9.8,2.3) {$2$};

\node [left] at (11.4,6) {$0$};
\node [above] at (10.2,5) {-$2$};
\node [below] at (9.9,4.7) {$0$};

\draw [dashed] (12,7) -- (15,5.1);
\draw (12,7) -- (15,2);
\draw (12,7) -- (12,0);
\draw [dashed] (12,7) -- (9,2);
\draw (12,7) -- (9,5.1);
\draw [dashed] (15,5.1) -- (15,2);
\draw (15,5.1) -- (12,0);
\draw (15,5.1) -- (9,2);
\draw (15,5.1) -- (9,5.1);
\draw [dashed] (15,2) -- (12,0);
\draw (15,2) -- (9,2);
\draw (15,2) -- (9,5.1);
\draw [dashed] (12,0) -- (9,2);
\draw (12,0) -- (9,5.1);
\draw (9,2) -- (9,5.1);

\end{tikzpicture}
\end{subfigure}
\hfill
\begin{subfigure}{0.3\textwidth}
\begin{tikzpicture}[scale=0.5]
\draw[fill=black] (12,7) circle (6pt);
\draw[fill=black] (15,5.1) circle (6pt);
\draw[fill=black] (15,2) circle (6pt);
\draw[fill=black] (12,0) circle (6pt);
\draw[fill=black] (9,2) circle (6pt);
\node [below] at (12,-0.5) {$(K_{6},\sigma_{16})$};
\draw[fill=black] (9,5.1) circle (6pt);

\node [right] at (12.5,7) {$2$};
\node [above] at (14.5,5.4) {$2$};
\node [right] at (15,4.5) {-$2$};
\node [right] at (15,2.6) {-$2$};
\node [below] at (14.5,1.7) {$2$};
\node [right] at (12.5,0.3) {-$2$};
\node [left] at (11.5,0.2) {$2$};
\node [below] at (9.5,1.7) {$2$};
\node [left] at (9,2.6) {-$2$};
\node [left] at (9,4.5) {-$2$};
\node [above] at (9.5,5.4) {$2$};
\node [left] at (11.5,7) {-$2$};

\node [right] at (12.5,6) {$0$};
\node [above] at (14,4.9) {-$1$};
\node [right] at (11.8,5.8) {-$1$};

\node [left] at (15.25,4) {$1$};
\node [left] at (15.25,3) {$0$};
\node [left] at (14.65,4.3) {$0$};

\node [below] at (13.8,2.2) {-$1$};
\node [above] at (12.9,0.5) {-$1$};
\node [above] at (14.2,2.3) {$1$};

\node [above] at (11,0.5) {$0$};
\node [right] at (9.8,1.7) {$1$};
\node [right] at (11.8,1) {$1$};

\node [right] at (8.8,3.3) {-$1$};
\node [right] at (8.8,4.1) {$0$};
\node [above] at (9.8,2.3) {$0$};

\node [left] at (11.4,6) {$1$};
\node [above] at (10.2,5) {$1$};
\node [below] at (9.9,4.7) {-$1$};

\draw [dashed] (12,7) -- (15,5.1);
\draw [dashed] (12,7) -- (15,2);
\draw (12,7) -- (12,0);
\draw (12,7) -- (9,2);
\draw (12,7) -- (9,5.1);
\draw [dashed] (15,5.1) -- (15,2);
\draw (15,5.1) -- (12,0);
\draw (15,5.1) -- (9,2);
\draw (15,5.1) -- (9,5.1);
\draw (15,2) -- (12,0);
\draw (15,2) -- (9,2);
\draw (15,2) -- (9,5.1);
\draw [dashed] (12,0) -- (9,2);
\draw [dashed] (12,0) -- (9,5.1);
\draw [dashed] (9,2) -- (9,5.1);

\end{tikzpicture}
\end{subfigure}
\caption{A continuation of Figure~\ref{Figure-1-SCGs on 6 vertices}. Remaining switching non-isomorphic signed complete graphs over $K_6$ (not shown in Figure~\ref{Figure-1-SCGs on 6 vertices}) and their proper edge colorings.}
\label{Figure-2-SCGs on 6 vertices}
\end{figure}
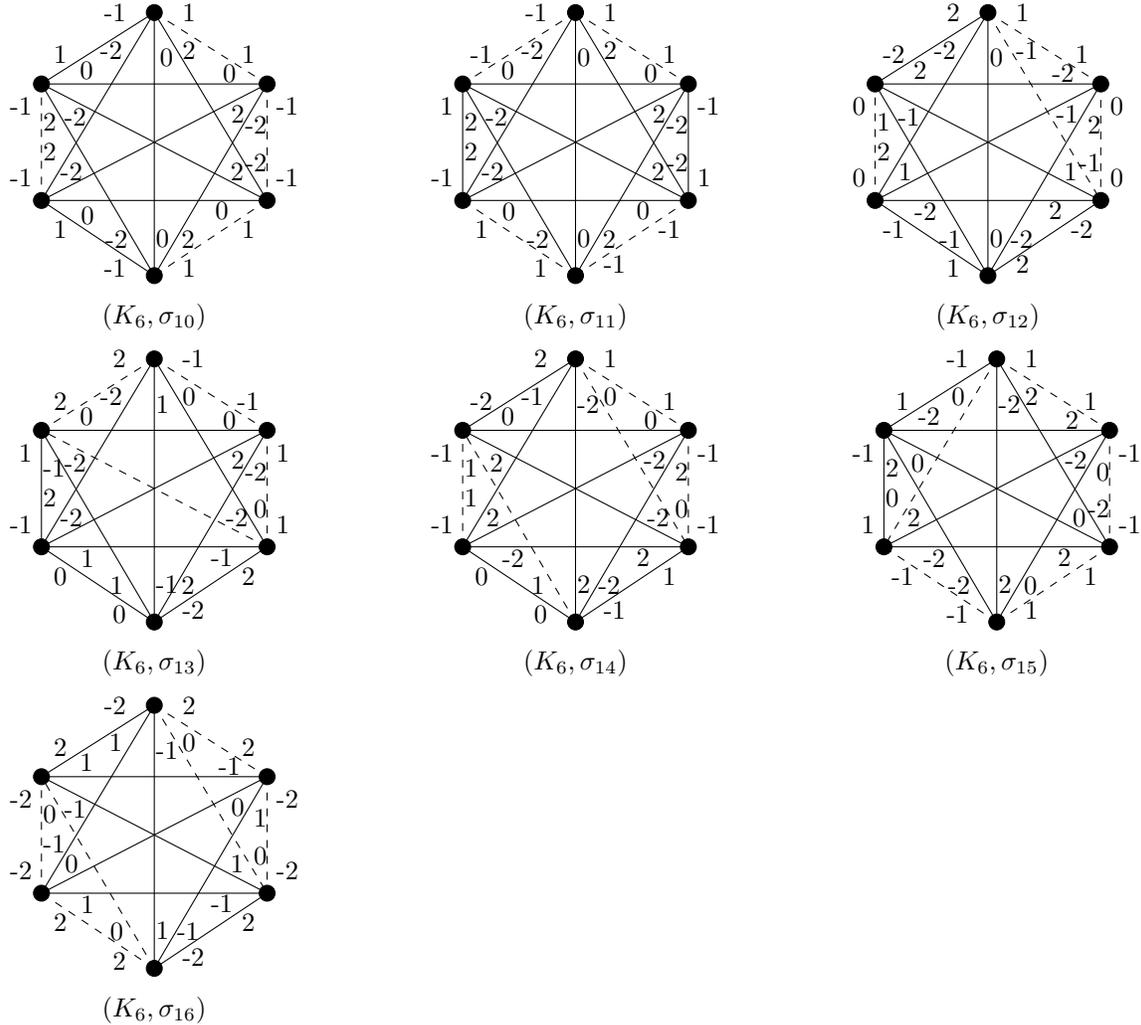

\begin{proof}
By Theorem~\ref{Vizing's thm}, $4 \leq \chi ' (K_5, \sigma) \leq 5$ for every signature $\sigma$. Therefore if $\sigma \sim \sigma_i$ for $i=1,3,4$, then to prove $\chi ' (K_5, \sigma) = 4$ , it suffices to give a proper 4-coloring of $(K_5, \sigma)$. Such a proper 4-coloring of $(K_5, \sigma_i)$ is given in Figure~\ref{Figure-proper edge colorings of signed K_n up to 5} for each $i=1,3,4$. Hence $\chi ' (K_5, \sigma) =4$ for $\sigma \sim \sigma_i$, where $i=1,3,4$.

For $\sigma \sim \sigma_2$, if $\chi ' (K_5, \sigma_2) =4$, then by Proposition~\ref{Proposition: Prop required for coloring of signed K_5}, $\Sigma_1$ and $\Sigma_2$ must be positive cycles of length 5. But $(K_5, \sigma_2)$ has exactly one negative edge, namely, $u_1u_2$ and therefore either $\Sigma_1$ or $\Sigma_2$ has to consist $u_1u_2$. Consequently, either $\Sigma_1$ or $\Sigma_2$ is a negative cycle, a contradiction. This proves that $\chi ' (K_5, \sigma_2) = 5$.

Similarly, for $j=5,6$, if $\sigma \sim \sigma_j$ and $\chi ' (K_5, \sigma) =4$, then by Proposition~\ref{Proposition: Prop required for coloring of signed K_5}, $\Sigma_1$ and $\Sigma_2$ must be positive cycles of length 5. But this is not possible because signed graphs $(K_5, \sigma_5)$ and $(K_5, \sigma_6)$ have exactly 3 negative edges making either $\Sigma_1$ or $\Sigma_2$ negative. Consequently $\chi ' (K_5, \sigma_j) = 5$ for $j=5,6$.

In $(K_5, \sigma_7)$, it is easy to verify that every $C_5$ is negative. Thus due to Proposition~\ref{Proposition: Prop required for coloring of signed K_5}, the chromatic index of $(K_5, \sigma_7)$ cannot be 4. Hence $\chi ' (K_5, \sigma_7) = 5$. This completes the proof.
\end{proof}

\begin{theorem}\label{Theorem: chro index of signed K_6s}
For any signature $\sigma$, $\chi ' (K_6, \sigma) = 5$.
\end{theorem}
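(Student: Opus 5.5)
The argument follows the pattern of Theorem~\ref{Theorem: chro index of signed K_4s}. By Theorem~\ref{Vizing's thm}, $5 = \Delta(K_6) \leq \chi'(K_6,\sigma) \leq 6$ for every signature $\sigma$. By Lemma~\ref{Lemma-invariance of chro index}, switching does not change the chromatic index, and isomorphism clearly does not either. So it suffices to exhibit a proper $5$-edge coloring, with colors from $M_5=\{0,\pm1,\pm2\}$, for one representative of each switching isomorphism class of signed $K_6$. There are sixteen such classes, with representatives $(K_6,\sigma_1),\ldots,(K_6,\sigma_{16})$ taken from \cite{Sehrawat2025} and drawn in Figures~\ref{Figure-1-SCGs on 6 vertices} and~\ref{Figure-2-SCGs on 6 vertices}.

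For each representative I would build the coloring from three pieces. The $0$-graph $\Sigma_0$ is a matching. By Observation~1, $\Sigma_1$ and $\Sigma_2$ must be balanced subgraphs of maximum degree $2$, so each is a disjoint union of paths and positive cycles. In $K_6$ each vertex has degree $5$ and uses each of $0,\pm1,\pm2$ exactly once, so $\Sigma_0$ is a perfect matching and $\Sigma_1$, $\Sigma_2$ are $2$-factors. Concretely, for each $\sigma_i$ the task is to choose a perfect matching $M$ of $K_6$ such that $K_6-M$ (a $4$-regular graph, the octahedron) splits into two $2$-factors that are each positive in the signature $\sigma_i$. Each such $2$-factor is either a Hamiltonian $6$-cycle or two triangles. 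Once this decomposition is chosen, Theorem~\ref{Theorem-cycles in Sigma_a} gives a valid $\pm1$ coloring of every positive cycle in $\Sigma_1$ and a valid $\pm2$ coloring of every positive cycle in $\Sigma_2$. Giving $0$ to both incidences of each edge of $M$ then completes a proper $5$-edge coloring.

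The main obstacle is the case analysis over the sixteen signatures. The required positivity constraints are parity conditions on the number of negative edges in each chosen cycle. My plan for each $\sigma_i$ is as follows. First, try to put all negative edges, or an even-sized subset of them, into $M$. Then use the freedom in the octahedron $K_6-M$: it has several decompositions into two $2$-factors, giving either two Hamiltonian cycles or a triangle pair together with a $6$-cycle. Among these I would pick one whose components each contain an even number of negative edges. If a difficult class resists this, I would switch to an equivalent signature with fewer negative edges, or place negative edges so that they pair up within single cycles.

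The explicit colorings in Figures~\ref{Figure-1-SCGs on 6 vertices} and~\ref{Figure-2-SCGs on 6 vertices} would serve as the certificates. The proof then reduces to checking, for each of the sixteen figures, that the colors at every vertex are distinct and that $\gamma(v,e)=-\sigma(e)\gamma(w,e)$ holds on every edge.
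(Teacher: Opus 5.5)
Your proposal is correct and takes essentially the same route as the paper: the signed Vizing bound $5\le\chi'(K_6,\sigma)\le 6$, a reduction via switching and isomorphism to the sixteen representatives, and the explicit proper $5$-edge colorings in the figures as certificates. Those colorings do have the shape you describe, namely a perfect matching colored $0$ together with two positive $2$-factors of the octahedron $K_6-M$.

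One small correction to your construction heuristic. Since $E(K_6)\setminus M$ is partitioned into positive cycles, the number of negative edges outside $M$ must be even. So $M$ must contain a set of negative edges with the same parity as the total number of negative edges, not necessarily an even-sized set. For example, the three negative edges of $(K_6,\sigma_5)$ form a path, and the paper's coloring puts exactly its middle edge into the $0$-matching.
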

\begin{proof}
By Theorem~\ref{Vizing's thm}, $5 \leq \chi ' (K_6, \sigma) \leq 6$ for every signature $\sigma$. It is clear that every signed $(K_6, \sigma)$ is switching equivalent to one of the signed complete graphs given in Figures~\ref{Figure-1-SCGs on 6 vertices} and \ref{Figure-2-SCGs on 6 vertices}. Thus, to complete the proof, it is sufficient to give a proper 5-edge coloring of each $(K_6, \sigma_i)$ for $i=1,\ldots ,16.$ A proper 5-edge coloring of each $(K_6, \sigma_i)$ is given in Figures~\ref{Figure-1-SCGs on 6 vertices} and \ref{Figure-2-SCGs on 6 vertices}. This completes the proof.
\end{proof}

In this subsection, we studied edge coloring of signed complete graphs of order up to 6. Further, we observed that the chromatic index of a signed complete graph $(K_n,\sigma)$ depends on the value of $n$ as well as signature $\sigma$. However, in case of even values of $n$ (here $n=4,6$), the chromatic index does not depend on the signature. So, based on this computation, we make the following conjecture.

\begin{conj}
Let $n \geq 8$ be an even number and let $\sigma$ be any signature, then $\chi ' (K_n, \sigma) = n-1$.
\end{conj}


\begin{thebibliography}{9}
\bibitem{Behr2020}
R. Behr, \textit{Edge coloring signed graphs}, Discrete Math. \textbf{343} (2020), 111654.

\bibitem{Cai2022}
H. Cai, Q. Sun, G. Xu and S. Zheng, \textit{Edge Coloring of the Signed Generalized Petersen Graph}, Bull. Malays. Math. Sci. Soc. \textbf{45} (2022), 647–661.

\bibitem{Harary1955}
F. Harary, \textit{On the notion of balance of a signed graph}, Mich. Math. J. \textbf{2} (1955), 143-146.

\bibitem{Janczewski2023}
R. Janczewski, K. Turowski and B. Wróblewski, \textit{Edge coloring of graphs of signed class 1 and 2}, Discret Appl. Math. \textbf{338} (2023), 311–319.

\bibitem{Máčajová2016}
E. Máčajová, A. Raspaud and M. Škoviera, \textit{The chromatic number of a signed graph}, Electron. J. Comb. \textbf{23} (2016), no. 1, 1–10.

\bibitem{Sehrawat2022}
D. Sehrawat and B. Bhattacharjya,  \textit{Chromatic Polynomials of Signed Book Graphs}, Theory Appl. Graphs, \textbf{9} (2022), no. 1, $\#4$.

\bibitem{Sehrawat2025}
D. Sehrawat, \textit{On double domination numbers of signed complete graphs}, Discret. Math. Algor. Appl. \textbf{17} (2025), no. 6, 2450094. 

\bibitem{Shi2007}
L. Shi and Z. Song, \textit{Upper bounds on the spectral radius of book-free and/or $K_{2,1}$-free graphs}, Linear Algebra Appl. \textbf{420} (2007), 526–529.


\bibitem{Vizing1964}
V. Vizing, \textit{On an estimate of the chromatic class of a p-graph}, Diskretn. Anal. \textbf{3} (1964), 23–30.

\bibitem{Wen2025}
C. Wen, Q. Sun, H. Cai and C. Zhang, \textit{The edge coloring of the Cartesian product of signed graphs}, Discrete Math. \textbf{348} (2025), 114276.

\bibitem{Zaslavsky1982-1}
T. Zaslavsky, \textit{Signed graphs}, Discret. Appl. Math. \textbf{4} (1982), 47–74.

\bibitem{Zaslavsky1982}
T. Zaslavsky, \textit{Signed graph coloring}, Discrete Math. \textbf{39} (1982), no. 2, 215–228.

\bibitem{Zhang2020}
L. Zhang, Y. Lu, R. Luo, D. Ye and S. Zhang, \textit{Edge coloring of signed graphs}, Discret. Appl. Math. \textbf{282} (2020), 234–242.

\bibitem{Zheng2022}
S. Zheng, H. Cai, Y. Wang and  Q. Sun, \textit{On the Chromatic Index of the Signed Generalized Petersen Graph $GP(n, 2)$} Axioms \textbf{11} (2022), no. 8, 393. 

\end{thebibliography}
\end{document}